\documentclass[12pt]{article}

 \usepackage[latin1]{inputenc}
\usepackage{fullpage}
\setlength{\parskip}{2ex}
\setlength{\parindent}{1em}
\usepackage{amsfonts}
\usepackage{amsmath}
\usepackage{amssymb}
\usepackage{amsthm}
\usepackage{authblk}
\usepackage{mathrsfs}
\usepackage{mathrsfs}
\usepackage{tikz}
\usepackage[margin=1.1in]{geometry}
\usetikzlibrary{arrows,matrix}
\usepackage[backend = biber]{biblatex}
\addbibresource{bib.bib}
\usepackage{authblk}
\newcommand{\NN}{\mathbb{N}}

\newcommand{\RR}{\mathbb{R}}

\numberwithin{equation}{section}

\theoremstyle{plain}
\newtheorem{theorem}[equation]{Theorem}
\newtheorem{lemma}[equation]{Lemma}
\newtheorem{proposition}[equation]{Proposition}
\newtheorem{corollary}[equation]{Corollary}

\theoremstyle{definition}
\newtheorem{definition}[equation]{Definition}
\newtheorem{example}[equation]{Example}

\theoremstyle{remark}

\newtheorem*{example*}{Example}

 { \begin{list}%
         {$\bullet$}%
         {\setlength{\labelwidth}{20pt}%
          \setlength{\leftmargin}{25pt}%
          \setlength{\topsep}{0pt}
          \setlength{\itemsep}{1ex}
          \setlength{\parsep}{0pt}}}%
 { \end{list} }


\title{Tropical Coordinates on the Space of Persistence Barcodes}
\author{Sara Kali\v{s}nik}
\affil{Max Planck Institute for Mathematics in the Sciences, sara.kalisnik@mis.mpg.de}
\affil{Wesleyan University, skalisnikver@wesleyan.edu}
  \date{}
  
 \begin{document}
 \maketitle

\begin{abstract}
The aim of applied topology is to use and develop topological methods for applied mathematics, science and engineering. One of the main tools is persistent homology, an adaptation of classical homology, which assigns a barcode, i.e.\ a collection of intervals, to a finite metric space. Because of the nature of the invariant, barcodes are not well-adapted for use by practitioners in machine learning tasks.  We can circumvent this problem by assigning numerical quantities to barcodes and these outputs can then be used as input to standard algorithms. It is the purpose of this paper to identify tropical coordinates on the space of barcodes and prove that they are stable with respect to the bottleneck distance and Wasserstein distances.
\end{abstract}
{\bf Keywords:} Persistent Homology, Coordinatizing the Barcode Space, Tropical Polynomials.
\section{Introduction}
In the past two decades, with the emergence of `big data', topology started playing a more prominent role in data analysis~\cite{topodata, pattern}. Topological ideas have inspired methods for visualizing complex datasets~\cite{mapper} as well as `measuring' the shape of data. Using the most famous example of the latter, persistent homology~\cite{ZC, elz-tps-02}, researchers have solved problems in sensor networks~\cite{VinEvader, adams}, medicine~\cite{aaronpaperonimaging, Ferri}, neuroscience~\cite{Chung2009, Giusti_Pastalkova_Curto_Itskov_2015, Curto2013} and gained insights into texture images~\cite{Klein}.

The output of persistent homology is a barcode, i.e.\ a collection of intervals. The unusual structure of the invariant makes the method hard to combine with  standard algorithms within machine learning. For this reason various attempts have been made to assign numerical quantities to barcodes or to send these objects into a Hilbert space through a feature map, where computations are easier~\cite{Reininghaus_2015_CVPR, Carriere:2015:STS:2853908.2853910, FabioComplex, landi, Bubenik:2015:STD:2789272.2789275, algfn, DBLP:journals/corr/ChepushtanovaEH15}. For example, Di Fabio and Ferri~\cite{FabioComplex, landi} assign complex vectors to barcodes, Bubenik~\cite{Bubenik:2015:STD:2789272.2789275} persistence landscapes. 

Adcock et al.~\cite{algfn} identified an algebra of polynomials on the barcode space that can be used as coordinates. The problem with these functions is that they are not stable (i.e.\ Lipschitz) with respect to the Bottleneck and Wasserstein $p$-distances usually used. This prompted us to search for other types of coordinates.  All the aforementioned distances on the barcode space are defined by matching intervals from one barcode to another and computing penalties that involve taking maxima. For this reason the max and min functions, i.e.\ tropical functions, seemed like a natural choice and as it turns out they are indeed more suitable given the underlying structure of the barcode space. 

We represent a barcode with exactly $n$ intervals as a vector $(x_1, d_1, x_2, d_2,, \ldots, x_n,d_n)$, where $x_i$ denotes the left endpoint of the $i$-th interval and $d_i$ its length. We assume that $x_i \geq 0$ for all $i$. This is not unreasonable since when constructing simplicial complexes from point clouds, the parameter is radius, which is nonnegative. This condition is also crucial later on in the construction when taking filtered inverse limits as it ensures a good behavior of certain maps when appropriately restricted. Since the ordering of the intervals does not matter, we take the orbit space, $B_n$, of the action of the symmetric group on $n$ letters on the product $([0, \infty) \times [0, \infty))^n$ given by permuting the coordinates. 
The barcode space, $B$, is the quotient
\[
\coprod_n B_n /_{\sim},
\]
where $\sim$ is generated by equivalences of the form 
\[
\{(x_1, d_1), (x_2, d_2), \ldots, (x_n, d_n) \} \sim \{(x_1, d_1), (x_2, d_2), \ldots, (x_{n-1}, d_{n-1})\},
\]
whenever $d_n=0$.

After a short review of tropical algebra and persistent homology in Sections 2 and 3, Section~\ref{sec:maxnplus} is devoted to establishing the properties of 2-symmetric max-plus polynomials that respect this equivalence relation. In particular, Theorem~\ref{maxplusgen} provides a list of generators for this semiring. Unfortunately, this condition is so limiting that the only functions satisfying it involve only lengths of intervals. While we prove that these are individually stable with respect to the bottleneck and Wasserstein distances, there are not enough of them to separate the barcodes. In fact, in contrast to ordinary polynomials, no finite set exists that separates barcodes in $B_n$ (Theorem~\ref{finsub}). This forces us to expand the semiring of observed functions to tropical rational functions. We find a countable generating set (Theorem~\ref{tropicalgen}) that separates the barcodes and prove that each function in this set is stable with respect to the bottleneck and Wasserstein distances (Theorems~\ref{tropratstab} and~\ref{tropratstab1}). These functions and their sums, minima and maxima can be used by researchers interested in analyzing datasets of shapes. In Section 8 we give an example that demonstrates how they can be used to classify digits from the MNIST dataset.

Of course, a natural question that arises is how to select finitely many functions from this infinite family that we identify. In the case when we deal with barcodes whose birth and death times only take finitely many values it is not hard to find finitely many functions that separate them (this is further discussed in Section 8 on a particular example). Even here we might run into trouble because the vectors we obtain might be very high dimensional. We are currently working on automating this step and using machine learning methods (for example, the Lasso method) on this collection of coordinate functions to select their weights.

\section{Tropical Functions}\label{sec:back}
This section reviews the material that first appeared in Symmetric and $r$-Symmetric Tropical Polynomials and Rational Functions~\cite{KalisnikSym}.

\subsection{Min-plus and Max-plus Polynomials}\label{back}
Tropical algebra is based on the study of the tropical semiring $(\RR \cup \{\infty \}, \oplus, \odot)$. In this semiring, addition and multiplication are defined as follows:
\[
\begin{array}{ccc}
a\oplus b := \min{(a, b)} &\, \textrm{and} \,& a\odot b := a+b.
\end{array}
\]
Both are commutative and associative. The times operator $\odot$ takes precedence when plus $\oplus$ and times $\odot$ occur in the same expression.
The distributive law holds:
\[
a \odot (b\oplus c) = a\odot b \oplus a\odot c.
\]
Moreover, the Frobenius identity (Freshman's Dream) holds for all powers $n$ in tropical arithmetic:
\begin{equation}\label{freshman}
(a \oplus b)^n = a^n \oplus b^n.
\end{equation}

Both arithmetic operations have a neutral element. Infinity is the neutral element for addition and zero is the neutral element for multiplication:
\[
x \oplus \infty = x \quad \textrm{ and }\quad x \odot 0 = x.
\]

Related to the tropical semiring is the arctic semiring $(\RR \cup \{-\infty \}, \boxplus, \odot)$, where multiplication of two elements is defined as before, but adding means taking their maximum instead of the minimum:
\[
\begin{array}{ccc}
a\boxplus b := \max{(a, b)} &\quad \textrm{and} \quad& a\odot b := a+b.
\end{array}
\]
Its operations are associative, commutative and distributive as in the tropical semiring. 

Let $x_1, x_2, \ldots, x_n$ be variables representing elements in the max-plus semiring.  A \emph{max-plus monomial expression} is any product of these variables, where repetition is allowed. By commutativity, we can sort the product and write monomial expressions with the variables raised to exponents.

A \emph{max-plus polynomial expression} is a finite linear combination of max-plus monomial expressions:
\[
p(x_1, x_2, \ldots, x_n) = a_1\odot x_1^{a_1^1} x_2^{a_2^1} \ldots x_n^{a_n^1} \boxplus a_2\odot x_1^{a_1^2} x_2^{a_2^2} \ldots x_n^{a_n^2}\boxplus \ldots \boxplus a_m\odot x_1^{a_1^m} x_2^{a_2^m} \ldots x_n^{a_n^m},
\]
Here the coefficients $a_1, a_2, \ldots a_m$ are real numbers and the exponents $a_j^i$ for ${1\leq j \leq n}$ and ${1\leq i \leq m}$ are nonnegative integers. 

The \emph{total degree of a max-plus expression} $p(x_1, x_2, \ldots, x_n)$ is
\[
\operatorname{deg} p = {\max_{1\leq i \leq m}(a_1^i+ a_2^i + \ldots + a_n^i)}.
\]

The passage from max-plus polynomial expressions to functions is not one-to-one. For example, 
\[
x_1^2 \boxplus x_2^2 = x_1^2 \boxplus x_2^2 \boxplus x_1x_2
\]
for all $x_1, x_2$ and therefore the functions defined by $x_1^2 \boxplus x_2^2 $ and $ x_1^2 \boxplus x_2^2 \boxplus x_1x_2$ are the same, though the expressions are formally different.

Considered as a function, $p\colon \RR^n \to \RR$ has the following three properties:
\begin{itemize}
\item
 $p$ is continuous,
\item
  $p$ is piecewise-linear, where the number of pieces is finite, and
  \item
 $p$ is convex.
  \end{itemize}
 Max-plus monomials are the linear functions with nonnegative integer coefficients.

Let  $p$ and $q$ be max-plus polynomial expressions. If
\[
p(x_1, x_2, \ldots, x_n) = q(x_1, x_2, \ldots, x_n)
\]
for all $(x_1, x_2, \ldots, x_n)\in (\RR \cup \infty)^n$, then $p$ and $q$ are \emph{functionally equivalent}. We write $p \sim q$. 

\emph{The minimal representation} of a max-plus polynomial $p$ is such a max-plus expression
\[
a_1\odot x_1^{i_1^1} x_2^{i_2^1} \ldots x_n^{i_n^1} \boxplus a_2\odot x_1^{i_1^2} x_2^{i_2^2} \ldots x_n^{i_n^2}\boxplus \ldots \boxplus a_m\odot x_1^{i_1^m} x_2^{i_2^m} \ldots x_n^{i_n^m}
\]
 functionally equivalent to $p$ that for each $1\leq j \leq m$ there exists a point ${(x_1, x_2, \ldots, x_n) \in \RR^n}$, so that 
\[
a_j+i_1^jx_1 +\ldots +i_n^jx_n > \max_{1\leq s\leq m, s \neq j} (a_s+i_1^s x_1 +\ldots + i_n^s x_n).
\]

\begin{definition}
\emph{Max-plus polynomials} are the semiring of equivalence classes of max-plus polynomial expressions with respect to $\sim$. In the case of $n$ variables we denote the semiring by $\mathtt{MaxPlus}[x_1, x_2, \ldots, x_n]$.
\end{definition}
We define min-plus polynomials expression as max-plus with $\boxplus$ replaced by $\oplus$. We define the degree of a min-plus polynomial expression analogously.
\begin{definition}
\emph{Min-plus polynomials} are the semiring of equivalence classes of min-plus polynomial expressions with respect to functional equivalence relation $\sim$. In the case of $n$ variables we denote the semiring by $\mathtt{MinPlus}[x_1, x_2, \ldots, x_n]$. 
\end{definition}
It can be shown that degrees of all max-plus (min-plus) expressions in the same equivalence class are the same and that it is therefore possible to define the degree of a max-plus (min-plus) polynomial.

\subsection{Rational Tropical Functions}\label{ratfun}

A \emph{tropical rational expression} $r$ is a quotient
\[
r(x_1, \ldots, x_n) = p(x_1, \ldots, x_n) \odot q(x_1, \ldots, x_n)^{-1} = p(x_1, \ldots, x_n)-q(x_1, \ldots, x_n),
\]
where $p$ and $q$ are min-plus polynomial expressions. 

\begin{definition}
The semiring of equivalence classes of tropical rational expressions with respect to the functional equivalence relation is $\mathtt{RTrop}[x_1, x_2, \ldots, x_n]$ and is called the \emph{semiring of rational tropical functions}. 
\end{definition}

We will need the following statement later on.
\begin{lemma}\label{decomposition}
A tropical rational function $r$ in $n$ variables gives a decomposition of $\RR^n$ into
   a family of closures of open sets on which the function is affine,
   and the boundaries of these domains are piecewise linear. 
\end{lemma}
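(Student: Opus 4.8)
The plan is to reduce the statement to the standard fact that a finite arrangement of affine hyperplanes in $\RR^n$ cuts the space into finitely many open convex polyhedra (chambers) whose closures cover $\RR^n$ and whose boundaries are finite unions of polyhedral faces, hence piecewise linear. Once $r$ is shown to be affine on each chamber, the family of chamber closures is exactly the decomposition asked for.

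First I would write $r = p - q$, where $p$ and $q$ are min-plus polynomial expressions, and pass to their minimal representations, so that as functions
\[
p(x) = \min_{1 \le i \le k}\bigl(a_i + \langle I_i, x\rangle\bigr), \qquad q(x) = \min_{1 \le j \le l}\bigl(b_j + \langle J_j, x\rangle\bigr),
\]
with $a_i, b_j \in \RR$ and $I_i, J_j \in \ZZ_{\ge 0}^n$. By the properties recalled in Section~\ref{back}, $p$ and $q$ are continuous and piecewise affine with finitely many pieces, and the ``bend locus'' of $p$ --- the set of points where the defining minimum is attained by at least two of the affine pieces $a_i + \langle I_i, x\rangle$ --- is contained in the finite union of hyperplanes $\{\,a_i + \langle I_i, x\rangle = a_{i'} + \langle I_{i'}, x\rangle\,\}$ for $i \neq i'$ (and likewise for $q$). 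Let $\mathcal{A}$ be the finite hyperplane arrangement formed by all these hyperplanes, for $p$ and for $q$ together, with the degenerate ones (empty or all of $\RR^n$) discarded.

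Next I would look at the chambers of $\mathcal{A}$, i.e.\ the connected components of $\RR^n \setminus \bigcup \mathcal{A}$: these are open convex polyhedra, their closures are convex polyhedra with piecewise-linear boundary, and these closures cover $\RR^n$. I claim $r$ is affine on each chamber $C$. Since $C$ is connected, open, and disjoint from the bend locus of $p$, around each of its points $p$ agrees with a single affine piece $a_i + \langle I_i, x\rangle$, and this index $i$ is locally constant on $C$, hence constant; so $p \equiv a_i + \langle I_i, x\rangle$ on all of $C$. The same argument gives $q \equiv b_j + \langle J_j, x\rangle$ on $C$, so $r \equiv (a_i - b_j) + \langle I_i - J_j, x\rangle$ is affine on $C$ and extends by the same affine formula to $\overline{C}$. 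Taking $\{\overline{C}\}$ over all chambers $C$ of $\mathcal{A}$ then gives a decomposition of $\RR^n$ into closures of open sets on which $r$ is affine, with piecewise-linear boundaries, as claimed.

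The argument is mostly bookkeeping once the reduction to a hyperplane arrangement is in place; the one step that deserves care is the passage from \emph{locally} affine to \emph{globally} affine on a chamber, where I use that a chamber is connected and misses the bend locus. (Using the minimal representations also ensures that every affine piece of $p$ and of $q$ is the strict minimum on some full-dimensional region, so the chambers genuinely refine both piecewise-linear structures; but this refinement is not needed for the statement itself, only the covering by chamber closures is.)
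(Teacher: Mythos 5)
Your argument is correct. It differs from the paper's proof mainly in how the decomposition is produced: the paper simply cites the polyhedral subdivision $\Sigma_p$ dual to a tropical polynomial (Maclagan--Sturmfels, Definition 2.5.5) and takes the common refinement $\Sigma = \Sigma_p \wedge \Sigma_q$ of the two complexes of linearity domains, so its proof is essentially a two-line appeal to standard tropical-geometry machinery and yields the coarsest natural cell decomposition. You instead build an explicit hyperplane arrangement out of all pairwise ``tie'' hyperplanes $\{a_i + \langle I_i, x\rangle = a_{i'} + \langle I_{i'}, x\rangle\}$ for $p$ and for $q$, and argue from scratch that on each chamber the minimizing index is locally constant, hence constant by connectedness. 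This buys self-containedness and elementarity (no external citation, and the only nontrivial step --- local-to-global affineness on a chamber --- is handled carefully), at the cost of producing a potentially finer, non-canonical decomposition: the arrangement may slice a single linearity cell of $\Sigma$ into several chambers along hyperplanes where no actual bend of $p$ or $q$ occurs. Since the lemma only asks for \emph{some} decomposition into closures of open sets on which $r$ is affine, with piecewise-linear boundaries, this extra refinement is harmless, and both proofs establish the statement.
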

\begin{proof}
A min-plus polynomial $p$ is a piecewise-linear concave function, and its domains of linearity
consist of the cells in a polyhedral subdivision $\Sigma_p$ as in \cite[Definition 2.5.5]{tropintro}. A tropical rational function $r$ has the form $p-q$ where $p$ and $q$ are both
min-plus polynomials. Let $\Sigma = \Sigma_p \wedge \Sigma_q$ be the common refinement of the 
corresponding polyhedral decompositions of $\RR^n$, as defined prior to \cite[equation (2.3.1)]{tropintro}.
Then $p-q$ is linear on each cell of $\Sigma$, and the boundaries of these cells are polyhedral balls or spheres.
\end{proof}

Since
\[
-\operatorname{min}(a, b) = \operatorname{max}(-a, -b),
\]
tropical rational expressions are composed by taking finitely many maxima and minima of linear functions, i.e.\ the set of tropical rational expressions is the smallest subset of functions $\RR^n\to \RR$ containing all constant maps and projections that is closed under taking finitely many $+$, $\min$ and $\max$. 

Conversely, any function from the latter set can be represented by an expression of the form $p \odot q^{-1}$, where $p$ and $q$ are tropical polynomial expressions. The algorithm to produce $p$ and $q$ is the usual one of adding fractions by finding a common denominator, but performed in tropical arithmetic~\cite{KalisnikSym}.

\begin{example}\label{ex}
Let $r(x_1, x_2) = x_2x_1^{-1} \oplus (x_2)^{-1} \oplus (x_2 x_1 \oplus x_1)^{-1}$. We can write
\[
\begin{array}{lclc}
r(x_1, x_2)&= &  x_2 x_1^{-1} \oplus (x_2)^{-1} \oplus (x_2 x_1 \oplus x_1)^{-1}&\\
&= &  x_2^2 (x_2 \oplus 0) (x_1x_2 (x_2 \oplus 0))^{-1} \oplus (x_1 (x_2 \oplus 0))(x_1x_2 (x_2 \oplus 0))^{-1} \oplus (x_2(x_2 x_1 \oplus x_1))^{-1}&\\
&= & (x_2^3 \oplus x_2^2 \oplus x_1x_2 \oplus x_1 \oplus  x_2) (x_1x_2^2\oplus x_1x_2)^{-1} & \\
&=&(x_2^3 \oplus x_1 \oplus x_2) \odot (x_1x_2^2 \oplus x_1x_2)^{-1}.
\end{array}
\]
\end{example}

As a consequence, any tropical rational expression (or equivalently, any map composed by taking finitey many maxima and minima of linear functions) in $x_1, \ldots, x_n$ can be written in ordinary arithmetic as
\[
\max_{i=1, \ldots, l_1} (\sum_{k=1}^n a_{k, i} x_k + c_i) - \max_{j=1, \ldots, l_2} (\sum_{k=1}^n s_{k, j} x_k + u_j)
\] 
for some $a_{k, i}$, $c_i$, $s_{k, j}$ and $u_j$ where $k\in \{1, \ldots, n\}, i\in \{1, \ldots, l_1\}$ and $j \in \{1, \ldots, l_2\}$. 

The functions contained in either $\mathtt{MinPlus}[x_1, x_2, \ldots, x_n]$, $\mathtt{MaxPlus}[x_1, x_2, \ldots, x_n]$ or $\mathtt{RTrop}[x_1, x_2, \ldots, x_n]$ are called \emph{tropical functions}.

\subsection{Symmetric and $2$-Symmetric Tropical Functions}
\begin{definition}
A tropical function $f$ is \emph{symmetric} if 
\[
f(x_1,\ldots, x_n) = f(x_{\pi(1)},\ldots ,x_{\pi(n)})
\]
for every permutation $\pi \in S_n$.
\end{definition}

Given variables $x_1, \ldots, x_n$, we define the \emph{elementary symmetric max-plus polynomials} $\sigma_1,\ldots, \sigma_n \in \mathtt{MaxPlus}[x_1, x_2, \ldots, x_n]$ by the formulas
\[
\begin{array}{lcl}
\sigma_1 &=& x_1 \boxplus \ldots \boxplus x_n, \\
&\vdots& \\
\sigma_k &=& \boxplus_{\pi\in S_n} x_{\pi(1)}  \odot \ldots \odot x_{\pi(k)},\\
&\vdots& \\
\sigma_n &=& x_1\odot x_2 \odot \ldots \odot x_n.
\end{array}
\]

The following version of the Fundamental Theorem of Symmetric Polynomials holds.
\begin{theorem}[Fundamental Theorem of Symmetric Max-Plus Polynomials]~\cite{KalisnikSym} \label{fundamentalminplus}
Every symmetric max-plus polynomial in $\mathtt{MaxPlus}[x_1, x_2, \ldots, x_n]$ can be written as a max-plus polynomial in the elementary symmetric max-plus polynomials $\sigma_1, \ldots, \sigma_n$.
\end{theorem}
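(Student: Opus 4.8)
The plan is to exploit a feature of the arctic semiring with no classical analogue: the symmetrization of a \emph{single} monomial is already a monomial in $\sigma_1,\ldots,\sigma_n$ (rather than merely a triangular combination of such, as for the classical monomial symmetric functions). I would argue in two steps — first reduce to symmetrizing one monomial, then symmetrize it explicitly via the rearrangement inequality and summation by parts.

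\textbf{Reduction to one monomial.} Fix a representative $f = \boxplus_{j=1}^m \ell_j$, where as a function $\ell_j(x) = c_j + \sum_{i=1}^n a_i^j x_i$ with the $a_i^j$ nonnegative integers. For every $j$ and every permutation $\pi \in S_n$, evaluating $f = \max_k \ell_k \ge \ell_j$ at the permuted point $(x_{\pi(1)},\ldots,x_{\pi(n)})$ and using the symmetry of $f$ gives
\[
c_j + \sum_{i=1}^n a_i^j x_{\pi(i)} \;=\; \ell_j\bigl(x_{\pi(1)},\ldots,x_{\pi(n)}\bigr) \;\le\; f\bigl(x_{\pi(1)},\ldots,x_{\pi(n)}\bigr) \;=\; f(x).
\]
Hence $\operatorname{Sym}(\ell_j) := \boxplus_{\pi\in S_n} c_j\odot x_{\pi(1)}^{a_1^j}\cdots x_{\pi(n)}^{a_n^j}$ satisfies $\operatorname{Sym}(\ell_j) \le f$ as a function, while the term $\pi = \mathrm{id}$ shows $\operatorname{Sym}(\ell_j)\ge \ell_j$; taking $\boxplus_j$ yields $f = \boxplus_{j=1}^m \operatorname{Sym}(\ell_j)$. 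So it suffices to write each $\operatorname{Sym}(\ell_j)$ as a max-plus polynomial in $\sigma_1,\ldots,\sigma_n$.

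\textbf{Symmetrizing a monomial.} As a function, $\operatorname{Sym}(\ell_j)(x) = c_j + \max_{\pi\in S_n}\sum_{i=1}^n a_i^j x_{\pi(i)}$; by the rearrangement inequality this maximum is attained by pairing the largest exponents with the largest coordinates, so it equals $c_j + \sum_{i=1}^n a_{(i)}^j x_{(i)}$, where $a_{(1)}^j\ge\cdots\ge a_{(n)}^j$ is the decreasing rearrangement of $(a_1^j,\ldots,a_n^j)$ and $x_{(1)}\ge\cdots\ge x_{(n)}$ that of $(x_1,\ldots,x_n)$. Now $\sigma_k$, as a function, is exactly the sum of the $k$ largest coordinates, $\sigma_k(x) = x_{(1)}+\cdots+x_{(k)}$, which is immediate from $\sigma_k = \boxplus$ over all $k$-element subsets. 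Writing $x_{(i)} = \sigma_i - \sigma_{i-1}$ (with $\sigma_0 := 0$) and applying summation by parts gives
\[
\operatorname{Sym}(\ell_j)(x) \;=\; c_j + \sum_{k=1}^n \bigl(a_{(k)}^j - a_{(k+1)}^j\bigr)\,\sigma_k(x), \qquad a_{(n+1)}^j := 0,
\]
and each coefficient $a_{(k)}^j - a_{(k+1)}^j$ is a nonnegative integer because the exponents are sorted. Thus $\operatorname{Sym}(\ell_j) = c_j \odot \sigma_1^{a_{(1)}^j - a_{(2)}^j}\odot\cdots\odot\sigma_n^{a_{(n)}^j}$ is a max-plus monomial in $\sigma_1,\ldots,\sigma_n$, and $f = \boxplus_j \operatorname{Sym}(\ell_j)$ is therefore a max-plus polynomial in them.

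I do not anticipate a serious obstacle: once the argument is run at the level of the piecewise-linear functions rather than of formal expressions, each step is elementary. The point requiring the most care is the reduction $f = \boxplus_j \operatorname{Sym}(\ell_j)$, which relies on each $\ell_j$ being pointwise dominated by the symmetric function $f$ — and it is precisely this domination, a manifestation of the underlying convexity and max structure, that makes the tropical statement cleaner than the classical one, where $m_\lambda$ is only a triangular, not monomial, combination of the elementary symmetric polynomials.
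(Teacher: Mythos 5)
Your proof is correct. The paper itself gives no proof of this theorem --- it is imported from the reference \cite{KalisnikSym} --- but your argument (reduce to the symmetrization of a single monomial via the pointwise domination $\ell_j \le f$, then identify $\operatorname{Sym}(\ell_j)$ as the single monomial $c_j \odot \sigma_1^{a_{(1)}^j-a_{(2)}^j}\odot\cdots\odot\sigma_n^{a_{(n)}^j}$ using the rearrangement inequality, $\sigma_k(x)=x_{(1)}+\cdots+x_{(k)}$, and Abel summation) is precisely the standard proof of the tropical fundamental theorem, and all steps check out, including the nonnegativity and integrality of the exponents $a_{(k)}^j-a_{(k+1)}^j$.
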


A tropical function in $n$ variables is symmetric if it is invariant under the action of $S_n$ that permutes the variables. We can generalize this definition as follows: a tropical function in $nr$ variables, divided into n blocks of $r$ variables each, is $r$-symmetric if it is invariant under the action of $S_n$ that permutes the blocks while preserving the order of the variables within each block. We state the results for $r=2$ because persistence barcodes are collections of intervals.

\begin{definition}
A tropical function $p$ is \emph{2-symmetric} if 
\[
p(x_{1, 1}, x_{1, 2}, \ldots, x_{n, 1}, x_{n, 2}) = p(x_{\pi(1), 1}, x_{\pi(1), 2}, \ldots , x_{\pi(n), 1}, x_{\pi(n), 2})
\]
for every permutation $\pi \in S_n$.
\end{definition}

Fix $n$. Let the symmetric group $S_n$ act on the matrix of indeterminates
\[
X = \begin{pmatrix} 
x_{1, 1} & x_{1, 2} \\
x_{2, 1} & x_{2, 2} \\
\vdots & \vdots \\
x_{n, 1} & x_{n, 2} \\
\end{pmatrix}
\]
by left multiplication. 
 Let 
\[
\mathscr{E}_n = \left\{ \begin{pmatrix} 
e_{1, 1} & e_{1, 2} \\
e_{2, 1} & e_{2, 2} \\
\vdots & \vdots \\
e_{n, 1} & e_{n, 2} \\
\end{pmatrix} \neq [0]_n^2 \,\mid \, e_{i, j} \in \{0, 1\} \textrm{ for }i=1,2,\ldots, n, 
\textrm{ and } j=1,2  \right\}.
\]
A matrix $\begin{pmatrix} 
e_{1, 1} & e_{1, 2} \\
e_{2, 1} & e_{2, 2} \\
\vdots & \vdots \\
e_{n, 1} & e_{n, 2} \\
\end{pmatrix}\in \mathscr{E}_n$ determines a max-plus monomial $P(E) = x_{1, 1}^{e_{1,1}} x_{1, 2}^{e_{1,2}} \ldots x_{n, 1}^{e_{n,1}} x_{n, 2}^{e_{n, 2}}$.
We denote  the set of orbits under the row permutation action on $\mathscr{E}_n$ by $\mathscr{E}_n/{S_n}$. Each orbit $\{E_1, E_2, \ldots E_m\}$ determines a 2-symmetric max-plus polynomial 
\[P(E_1)\boxplus P(E_2)\boxplus \ldots \boxplus P(E_m).
\]
 Including the $[0]_n^2$ matrix in the definition of monomials would have been redundant as the 0 function can be expressed in terms of other 2-symmetric max-plus polynomials (by simply raising them to 0).

\begin{definition}\label{2symel}
We call the 2-symmetric max-plus polynomials that arise from orbits $\mathscr{E}_n/{S_n}$ \emph{elementary}. We let $\sigma_{(e_{1, 1}, e_{1, 2}), \ldots, (e_{n, 1}, e_{n, 2}) }$ denote the tropical polynomial that arises from the orbit
\[
\left[\begin{pmatrix} 
e_{1, 1} & e_{1, 2} \\
e_{2, 1} & e_{2, 2} \\
\vdots & \vdots \\
e_{n, 1} & e_{n, 2} \\
\end{pmatrix}\right].
\]
\end{definition}
\begin{example}
Let $n=2$. The 2-symmetric max-plus polynomials include
\[
\begin{array}{l}
\sigma_{(1, 0), (0, 0)} (x_1, d_1, x_2, d_2) = x_1 \boxplus x_2 = \max\{x_1, x_2 \}\\
\sigma_{(0, 1), (0, 0)} (x_1, d_1, x_2, d_2) = d_1 \boxplus d_2 = \max\{d_1, d_2 \}\\
\sigma_{(0, 1), (0, 1)} (x_1, d_1, x_2, d_2) = d_1\odot d_2  = d_1+d_2 \\
\sigma_{(1, 0), (1, 0)} (x_1, d_1, x_2, d_2) = x_1\odot x_2  = x_1+x_2 \\
\sigma_{(1, 1), (1, 1)} (x_1, d_1, x_2, d_2) = x_1 \odot d_1  \odot x_2  \odot d_2 = x_1+d_1+x_2 +d_2. \\
\end{array}
\]
More generally, for $n$ and $k\leq n$, $\sigma_{(0, 1)^k}$ is the total length of the $k$ longest bars and $\sigma_{(1, 0)^k}$ is the sum of the $k$ latest birth times.
\end{example}

There are enough 2-symmetric max-plus polynomials to separate the orbits. We will need this piece of information to show that the functions we define in Section~\ref{sec:troprat} separate barcodes.
\begin{proposition}~\cite{KalisnikSym}\label{sepbar}
Let  $\{ (x_{1}, y_{1}), \ldots, (x_{n}, y_{n})\}$ and $\{(x_{1}', y_{1}'), \ldots, (x_{n}', y_{n}')\}$ be two orbits under the row permutation action of $S_n$ on $\RR^{2n}$. If 
\[
\sigma(\{(x_{1}, y_{1}), \ldots, (x_{n}, y_{n})\}) = \sigma(\{x_{1}', y_{1}'), \ldots, (x_{n}', y_{n}')\})
\]
for all elementary 2-symmetric max-plus polynomials $\sigma$, then 
\[
\{(x_{1}, y_{1}), \ldots, (x_{n}, y_{n})\} = \{(x_{1}', y_{1}'), \ldots, (x_{n}', y_{n}')\}.
\]
\end{proposition}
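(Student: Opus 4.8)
The plan is to translate the hypothesis into a statement about linear assignment problems, recover several one-dimensional marginals of the two orbits, and then recover the matching between their $x$- and $y$-coordinates by induction on $n$. To set up: the $S_n$-orbit of a matrix $E\in\mathscr{E}_n$ depends only on the multiset of its rows, and a $(0,0)$ row contributes nothing to $P(E)$, so $\sigma_E$ is determined by the numbers $a,b,c$ of rows equal to $(1,1)$, $(1,0)$, $(0,1)$, with $1\le a+b+c\le n$. Writing a representative of the first orbit as $(x_1,y_1),\ldots,(x_n,y_n)$, one computes
\[
\sigma_E=\Phi(a,b,c):=\max_{S,T,U}\Big(\,\sum_{i\in S}(x_i+y_i)+\sum_{i\in T}x_i+\sum_{i\in U}y_i\,\Big),
\]
the maximum over pairwise disjoint $S,T,U\subseteq\{1,\ldots,n\}$ of sizes $a,b,c$; this is the optimal value of a linear assignment problem. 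So the hypothesis says exactly that $\Phi(a,b,c)$ agrees on the two orbits for every admissible triple.

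The first substantive step recovers marginals. Here $\Phi(0,k,0)$ equals the sum of the $k$ largest of $x_1,\ldots,x_n$, and letting $k$ run from $1$ to $n$ these partial sums determine the sorted list $x_{(1)}\ge\cdots\ge x_{(n)}$, hence the multiset $\{x_1,\ldots,x_n\}$. Likewise $\Phi(0,0,k)$ recovers $\{y_i\}$, the values $\Phi(k,0,0)$ recover $\{x_i+y_i\}$, and $\Phi(0,k,n-k)=\sum_i y_i+(\text{sum of the }k\text{ largest }x_i-y_i)$ recovers $\{x_i-y_i\}$. (This is the $2$-symmetric counterpart of the one-variable fact underlying Theorem~\ref{fundamentalminplus}: the tropical elementary symmetric functions of a list of reals determine the list.) After this step the two orbits coincide except possibly in how the $y$-coordinates are matched with the $x$-coordinates.

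Recovering that matching is the heart of the argument; I would do it by induction on $n$, equivalently by reading off the points of the orbit one at a time in decreasing order of $x+y$, ties broken by $x$, and extracting at each stage the next point $(\alpha,\beta)$. Its value $\alpha+\beta$ is the next entry of the already-known $(x+y)$-multiset, and $\alpha$ has to be read off the genuinely mixed functions $\Phi(a,b,c)$ with $a,b$ both positive, the ones carrying information beyond the marginals; these record correlation between large $x$- and large $y$-values. For instance, comparing $\Phi(1,1,0)=\max_{i\neq j}\big((x_i+y_i)+x_j\big)$ with the marginal quantity $\max_i(x_i+y_i)+\max_i x_i$ detects whether every point of maximal $x+y$ also has maximal $x$; combined with the multiplicity information already read off the $x$- and $(x+y)$-multisets, a suitable sequence of such comparisons determines the $x$-coordinate of the top point, and then of each successive point once the previous ones are known. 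When the last point is recovered the orbit is determined, so the two orbits agree.

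The main obstacle is this third step. Unlike the marginals, the optimal assignment defining $\Phi(a,b,c)$ does not factor through the marginals of the orbit (already for $\Phi(1,1,0)$ the optimal $(1,1)$-slot need not be the point of maximal $x+y$), and the marginals alone do not separate orbits — the eight-point ``switching'' configurations familiar from discrete tomography have identical $x$-, $y$-, $(x+y)$- and $(x-y)$-marginals — so the mixed-row functions must genuinely be used. Carrying out the extraction of the successive $x$-coordinates, choosing the right triples $(a,b,c)$ and doing the attendant exchange and rearrangement arguments, especially when coordinates are repeated so that one must handle a whole block of equal points at once, is where the real work lies; the rest is bookkeeping.
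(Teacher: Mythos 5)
The paper itself offers no proof of Proposition~\ref{sepbar}; it is imported from \cite{KalisnikSym}, so there is nothing in-text to compare your argument against and I judge it on its own. Your reduction is correct and well set up: $\sigma_E$ depends only on the row counts $(a,b,c)$ of $(1,1)$, $(1,0)$, $(0,1)$ rows and equals the assignment value $\Phi(a,b,c)$; the families $\Phi(0,k,0)$, $\Phi(0,0,k)$, $\Phi(k,0,0)$, $\Phi(0,k,n-k)$ do recover the multisets of $x$, $y$, $x+y$, $x-y$; and you are right that these four marginals cannot separate orbits (switching components), so the genuinely mixed invariants must do the work.

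The problem is that the step you yourself call ``the heart of the argument'' is not carried out, and it is exactly the content of the proposition. You assert that a ``suitable sequence of comparisons'' of mixed $\Phi(a,b,c)$ against marginal quantities extracts the $x$-coordinate of the successive points in your chosen order, but you neither exhibit the sequence nor prove it terminates correctly, and you explicitly defer the exchange/rearrangement arguments and the case of repeated coordinates. This is not bookkeeping: the optimizer of $\Phi(a,b,c)$ is the solution of an assignment problem and need not place the current extremal point in the $(1,1)$ slot. For instance $\Phi(1,1,0)=\max_{i\ne j}\bigl(x_i+x_j+\max(y_i,y_j)\bigr)$, and for the configuration $\{(0,10),(9,8),(100,0)\}$ the optimum is $117$, attained with $(9,8)$ --- not the point of maximal $x+y$ --- in the $(1,1)$ slot. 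So the greedy ``peel off the top point'' intuition fails naively, and one must prove that the full collection of mixed invariants nonetheless pins down the pairing, including when $x$-, $y$-, or $(x+y)$-values repeat. Until that extraction lemma is stated and proved, the argument is an outline with a gap at its central step.
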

We identified a finite set that generates symmetric max-plus polynomials. No such statement holds in the case of 2-symmetric max-plus polynomials~\cite{KalisnikSym}.
\begin{proposition}
The semiring of 2-symmetric max-plus polynomials in variables $x_{1, 1}, x_{1, 2}$, \newline $x_{2, 1}, x_{2, 2}$ is not finitely generated.
\end{proposition}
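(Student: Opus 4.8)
The plan is to exhibit an explicit infinite family $f_1, f_2, f_3, \ldots$ of $2$-symmetric max-plus polynomials and to show that no finite set of $2$-symmetric max-plus polynomials can generate all of them under $\boxplus$ and $\odot$; since every $f_k$ belongs to the semiring, this is enough. The obstruction will be the Newton polytope, used together with the facts that $\mathrm{Newt}$ of a $\odot$-product is the Minkowski sum of the factors' Newton polytopes, that $\mathrm{Newt}$ of a $\boxplus$-sum is the convex hull of the union of the two Newton polytopes, and that neither operation lowers dimension.

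For $k\ge 1$ put
\[
f_k \;=\; x_{1,1}^{k}\odot x_{2,2}\;\boxplus\; x_{2,1}^{k}\odot x_{1,2},
\]
i.e.\ $f_k(x_{1,1},x_{1,2},x_{2,1},x_{2,2})=\max\!\bigl(k\,x_{1,1}+x_{2,2},\ k\,x_{2,1}+x_{1,2}\bigr)$. Swapping the blocks $(x_{1,1},x_{1,2})$ and $(x_{2,1},x_{2,2})$ interchanges the two monomials, so each $f_k$ is $2$-symmetric. Both monomials are essential — the first dominates as $x_{1,1}\to+\infty$, the second as $x_{2,1}\to+\infty$ — so $\mathrm{Newt}(f_k)\subset\RR^4$ is the segment joining the exponent vectors $(k,0,0,1)$ and $(0,1,k,0)$; its direction is $(k,-1,-k,1)$, and for distinct $k$ these directions are pairwise non-parallel.

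I would then record the needed facts about $\mathrm{Newt}$: for $p=\max_i\bigl(c_i+\langle a_i,x\rangle\bigr)$ one has $\mathrm{Newt}(p)=\{\,v : x\mapsto p(x)-\langle v,x\rangle\text{ is bounded below}\,\}$ (so $\mathrm{Newt}(p)$ depends only on the function), $\mathrm{Newt}(p\odot q)=\mathrm{Newt}(p)+\mathrm{Newt}(q)$, $\mathrm{Newt}(p\boxplus q)=\mathrm{conv}\bigl(\mathrm{Newt}(p)\cup\mathrm{Newt}(q)\bigr)$, and in both cases the dimension is at least that of each operand. Assume for contradiction that finitely many $2$-symmetric $g_1,\ldots,g_N$ generate the semiring. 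Then each $f_k$ is computed by an expression tree whose leaves are generators $g_j$ or constants and whose internal nodes are $\boxplus$ or $\odot$. Since the root $\mathrm{Newt}(f_k)$ is $1$-dimensional and dimension cannot increase as one descends the tree, every node — hence every leaf used — has a Newton polytope of dimension at most $1$, i.e.\ a point or a segment; and in dimension at most $1$ a Minkowski sum, or a convex hull of a union, of two segments is still $1$-dimensional only when the segments are parallel (and collinear, in the union case), so every segment occurring in the tree is parallel to the root segment. Now a $2$-symmetric max-plus monomial equals $(x_{1,1}x_{2,1})^{a}(x_{1,2}x_{2,2})^{b}$, so it has exponent vector $(a,b,a,b)$; therefore $\odot$-products of point (that is, monomial) generators and constants, and differences of such, all lie in the sublattice $\{(s,t,s,t)\}$, which contains no $(k,-1,-k,1)$ with $k\neq 0$. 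Hence a segment equal to $\mathrm{Newt}(f_k)$ cannot be assembled from points alone, so the tree must use at least one segment generator, and that generator's fixed direction must be parallel to $(k,-1,-k,1)$. But a fixed nonzero vector is parallel to $(k,-1,-k,1)$ for at most one $k$, so the finitely many segment generators account for only finitely many values of $k$ — contradicting that all $k\ge 1$ must be accounted for.

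The step I expect to be the crux is controlling $\boxplus$: in contrast to the Minkowski sum, $\mathrm{conv}(P\cup Q)$ can manufacture new ``bridge'' edge directions joining a vertex of $P$ to a vertex of $Q$, so a naive edge-direction argument is not stable under the semiring operations. The dimension bound is what makes the argument go through — it confines the whole construction of $f_k$ to polytopes of dimension at most $1$, where a convex hull of two non-collinear segments is automatically $2$-dimensional and therefore forbidden, and that is exactly what pins down the direction. The remaining points — that both monomials of $f_k$ lie in its minimal representation (so $\mathrm{Newt}(f_k)$ really is the full segment), and that scalar coefficients and the neutral elements introduce no new directions — are routine.
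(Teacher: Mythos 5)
Your argument is correct. Be aware that the paper itself does not prove this proposition --- it is imported from \cite{KalisnikSym} without an in-text argument --- so your Newton-polytope proof has to stand on its own, and it does. The three load-bearing points are all present. First, your characterization of $\mathrm{Newt}(p)$ as $\{\,v : x\mapsto p(x)-\langle v,x\rangle\ \text{is bounded below}\,\}$ makes the polytope an invariant of the \emph{function} rather than of the expression, so it is well defined on the semiring of equivalence classes and can be propagated through any expression tree over a hypothetical finite generating set. Second, the dimension bound really does neutralize the $\boxplus$ ``bridge'' problem you flag: once every node is forced to be a point or a segment, a convex hull of a union can only manufacture a new direction out of two points, while any $\odot$ or $\boxplus$ involving a segment must, to remain one-dimensional, produce a segment parallel to that segment; an induction up the tree then pins every segment occurring anywhere in the tree to the direction $(k,-1,-k,1)$ of the root. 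Third, both residual cases close: if no leaf is a segment, then every leaf is a $2$-symmetric monomial or a constant, with exponent in the subspace $V=\{(s,t,s,t)\}$, and since $V$ is closed under Minkowski sums and convex hulls every node's polytope lies in $V$, which contains neither endpoint $(k,0,0,1)$ nor $(0,1,k,0)$; if some leaf is a segment, its fixed direction is parallel to $(k,-1,-k,1)$ for at most one $k$. Note that $2$-symmetry of the generators is used exactly once, to force monomial generators into $V$ --- and it must be used somewhere, since the unsymmetrized semiring in four variables \emph{is} finitely generated. The only point I would ask you to write out explicitly is the short induction behind ``every segment occurring in the tree is parallel to the root segment''; as stated it reads as an assertion rather than an argument, though it is routine.
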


\section{Persistent Homology}\label{sec:ph}
Classical topologists developed homology in order to `measure' shape. In simplest terms, homology counts the occurrences of patterns, such as the number of connected components, loops and voids. The adaptation of homology to the study of point cloud data and more generally, filtrations of simplicial complexes, is persistent homology~\cite{Frosini_sizetheory, elz-tps-02, ZC}.

The motivating idea is that the union of discs with radius $r$ centered around points from the data set approximates the underlying shape of the point cloud. We do not know \textit{a priori} how to choose the radius. Persistent homology computes and keeps track of the changes in the homology these unions of discs over a range of radii parameters $r$. The output is a barcode, ie.\ a collection of intervals. Each interval corresponds to a topological feature which appears at the value of a parameter given by the left endpoint of the interval and disappears at the value given by the right endpoint. These barcodes play an analogous role as a histogram would in summarizing the shape of the data --- long intervals correspond to strong topological signals and short ones may correspond to noise.

\subsection{Barcode Space, Bottleneck Distance, Wasserstein Distances}
Each barcode with $n$ intervals can be encoded as $(x_1, d_1, x_2, d_2,\ldots, x_n, d_n)$ where $x_i$ is the left endpoint of the $i$-th interval and $d_i$ its length. Since the ordering of the intervals does not matter, we consider the orbit space of the action of the symmetric group on $n$ letters on the product $([0, \infty) \times [0, \infty))^n$ given by permuting the coordinates. We denote it by $B_n$. 

The barcode space $B$ is the quotient
\[
\coprod_n B_n /_{\sim},
\]
where $\sim$ is generated by equivalences of the form 
\[
\{(x_1, d_1), (x_2, d_2), \ldots, (x_n, d_n) \} \sim \{(x_1, d_1), (x_2, d_2), \ldots, (x_{n-1}, d_{n-1})\},
\]
whenever $d_n=0$.

Before specifying the distance between two barcodes, we specify the distance between any pair of intervals, as well as the distance between any interval and the set of zero length intervals $\Delta = \{(x, x)\,|\, 0 \leq x < \infty \}$.  Set
\[
\textrm{d}_\infty ((x_1, d_1), (x_2, d_2)) = \max (|x_1-x_2|, |d_1-d_2 +x_1- x_2|).
\]
The distance between an interval and the set $\Delta$ is
\[
\textrm{d}_\infty ((x, d), \Delta) = \frac{d}{2}.
\]
Let $\mathscr{B}_1 = \{I_\alpha\}_{\alpha \in A}$ and $\mathscr{B}_2 = \{J_\beta\}_{\beta \in B}$ be barcodes. For finite sets $A$ and $B$, and any bijection $\theta$ from a subset $A' \subseteq A$ to $B' \subseteq B$, the penalty of $\theta$, $P_\infty(\theta)$, is
\[
P_\infty (\theta) = \max(\max_{a\in A'}(\textrm{d}_\infty(I_a, J_{\theta(a)})), \max_{a\in A \setminus A'} \textrm{d}_\infty (I_a, \Delta), \max_{b\in B\setminus B'} \textrm{d}_\infty (I_b, \Delta)).
\] 
The \emph{bottleneck distance}~\cite{Cohen-Steiner_2007} is
\[
\textrm{d}_\infty(\mathscr{B}_1, \mathscr{B}_2) = \min_\theta P_\infty(\theta),
\]
where the minimum is over all possible bijections from subsets of $A$ to subsets of $B$. 

There are other metrics also commonly used  for barcode spaces. Setting the penalty for $\theta$ for $p\geq 1$ to
\[
P_p(\theta) = \sum_{a\in A'}\textrm{d}_\infty(I_a, J_{\theta(a)})^p +\sum_{a\in A \setminus A'} \textrm{d}_\infty (I_a, \Delta)^p +\sum_{b\in B\setminus B'} \textrm{d}_\infty (I_b, \Delta)^p
\] 
yields the \emph{$p$th-Wasserstein distance} between $\mathscr{B}_1$, $\mathscr{B}_2$:
\[
\textrm{d}_p(\mathscr{B}_1, \mathscr{B}_2) = (\min_\theta P_p(\theta))^{\frac{1}{p}} .
\]

\section{Max-Plus Polynomials on the Barcode Space}\label{sec:maxnplus} 
In this section we find all max-plus polynomials that we can use as coordinates on the barcode space and prove that they are stable with respect to the bottleneck and Wasserstein distances. 

The first step is to identify 2-symmetric max-plus polynomials on the image of $B_n \to B$. By abuse of notation we denote it simply by $B_n$. It is the quotient of the following equivalence relation: two multisets of $n$ intervals each, 
\[
I= \{(x_1,d_1), (x_2, d_2), \ldots, (x_n, d_n)\} \textrm{ and }J  = \{(x_1, d_1), (x_2, d_2), \ldots, (x_n, d_n)\},
\]
are equivalent if subsets $A, B \subseteq\{1,\ldots,n\}$ exist such that there is an equality of multisets $I\setminus\{(x_\alpha,0)\,:\,\alpha\in A\} = J\setminus\{(x_\beta,0)\,:\,\beta\in B\}$.

If $\mathcal{W}_j \subseteq ([0, \infty) \times [0, \infty))^n$ is the subset of $n$-tuples of pairs $(x_1, d_1, x_2, d_2, \ldots, x_n, d_n)$, with 0 persistence, i.e.\ $d_j = 0$, then these functions are precisely the 2-symmetric max-plus polynomials whose restriction to $\mathcal{W}_j$ is independent of $x_j$ for all $j$. 

\begin{lemma}\label{minprebar}
Let the minimal representation of a max-plus polynomial $p(x_1, d_1, \ldots, x_n, d_n)$ be
 \[
\displaystyle{\boxplus_{i = 1 \ldots, m} }a_0^i \odot x_1^{a_1^i} \odot d_1^{b_1^i}\odot  \ldots \odot x_n^{a_n^i}  \odot d_n^{b_n^i}.
 \]
Then $p$ restricted to $\mathcal{W}_j$ is independent of $x_j$ if and only if $a_j^i=0$ for all $i =1, \ldots, m$.
\end{lemma}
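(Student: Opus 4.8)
The "if" direction is immediate: if $a_j^i = 0$ for every $i$, then no monomial in the minimal representation contains $x_j$, so $p$ does not depend on $x_j$ anywhere on $\RR^{2n}$, in particular on $\mathcal{W}_j$. The content is in the "only if" direction, so the plan is to prove the contrapositive: assuming some exponent $a_j^{i_0} > 0$, I will exhibit a point of $\mathcal{W}_j$ at which the value of $p$ genuinely varies as $x_j$ varies, contradicting independence of $x_j$ on $\mathcal{W}_j$.

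First I would recall what the minimality of the representation buys us: for the index $i_0$ with $a_j^{i_0} > 0$ there is a point $z = (x_1, d_1, \ldots, x_n, d_n) \in \RR^{2n}$ at which the $i_0$-th affine term strictly dominates all the others, i.e.
\[
a_0^{i_0} + \sum_{k} a_k^{i_0} x_k + \sum_k b_k^{i_0} d_k \;>\; \max_{s \neq i_0}\Bigl( a_0^{s} + \sum_{k} a_k^{s} x_k + \sum_k b_k^{s} d_k \Bigr).
\]
By continuity of all the affine functions involved, the strict inequality persists on an open neighborhood $U$ of $z$; on $U$ the function $p$ agrees with the single affine function $\ell_{i_0}(x,d) = a_0^{i_0} + \sum_k a_k^{i_0} x_k + \sum_k b_k^{i_0} d_k$, which has nonzero $x_j$-coefficient $a_j^{i_0}$.

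The remaining step is to move this open witness set $U$ so that it meets $\mathcal{W}_j$ while keeping the domination strict. I would set $d_j = 0$ by hand — replace the $j$-th length coordinate of $z$ by $0$ — and check that domination still holds at (a perturbation of) the new point. Here one must be a little careful, because lowering $d_j$ changes $\ell_{i_0}$ by $-b_j^{i_0} d_j$ and changes each competitor $\ell_s$ by $-b_j^s d_j$; if this narrows or reverses the gap I would first translate $z$ in the coordinates other than $x_j$ so that the gap at $z$ exceeds the maximum possible change, or, more cleanly, exploit that the domains of linearity of a min-plus/max-plus polynomial are full-dimensional polyhedra (Lemma~\ref{decomposition} and the polyhedral subdivision $\Sigma_p$), so the open region where $\ell_{i_0}$ dominates has nonempty interior and one can slide a segment of the line $\{d_j = 0,\ x_j \text{ free}\}$ into it. Once I have two points $w, w'$ in $U \cap \mathcal{W}_j$ differing only in the $x_j$-coordinate, $p(w) - p(w') = \ell_{i_0}(w) - \ell_{i_0}(w') = a_j^{i_0}(x_j - x_j') \neq 0$, so $p$ restricted to $\mathcal{W}_j$ depends on $x_j$, contradicting the hypothesis; hence $a_j^i = 0$ for all $i$.

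The main obstacle is precisely this last geometric step: guaranteeing that the cell of $\Sigma_p$ on which $\ell_{i_0}$ is the active affine piece actually intersects the hyperplane $\{d_j = 0\}$ in a set with nonempty relative interior, rather than only touching it on a lower-dimensional face (or missing it entirely because that cell lies in the region $d_j > 0$). I expect this to be handled either by a direct perturbation argument as above, or by invoking convexity of the max-plus polynomial together with the fact that all relevant coordinates range over $[0,\infty)$, so that the boundary behavior at $d_j = 0$ is controlled; all other steps are routine bookkeeping with affine functions.
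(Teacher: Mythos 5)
Your ``if'' direction is fine, but the step you yourself flag as ``the main obstacle'' in the ``only if'' direction is a genuine gap, and the repairs you sketch do not close it. The difficulty is not merely that the strict-domination cell $U$ of your chosen monomial $\ell_{i_0}$ might meet $\{d_j=0\}$ only in a lower-dimensional face; it can miss $\{d_j=0\}$ entirely, and translating $z$ in the other coordinates cannot help. Concretely, take $n=1$, $j=1$ and $p = x_1\odot d_1 \boxplus x_1 = x_1+\max(d_1,0)$. This is a minimal representation (each term strictly dominates on $\{d_1>0\}$, resp.\ $\{d_1<0\}$), both monomials have $a_1^i=1>0$, and yet no monomial is strictly dominant at any point of $\mathcal{W}_1=\{d_1=0\}$: the two affine pieces are identically tied there. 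The gap between the two terms is exactly $d_1$, so no translation in the remaining coordinates widens it, and no segment of $\{d_1=0,\ x_1\ \text{free}\}$ can be slid into either open cell. Nevertheless $p|_{\mathcal{W}_1}=x_1$ depends on $x_1$. So your reduction --- locate two points of $\mathcal{W}_j$, differing only in $x_j$, inside a single strict-domination cell --- is not the right one: the dependence on $x_j$ can come entirely from monomials that are only ever tied on $\mathcal{W}_j$. (Note also that the witness point $z$ supplied by minimality may lie outside $[0,\infty)^{2n}$ altogether.)

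The paper's proof sidesteps all of this with a direct evaluation that you should adopt: restrict $p$ to the ray $(0,\ldots,0,x_j,0,\ldots,0)\subset\mathcal{W}_j$, along which $p$ becomes the univariate max-plus polynomial $\boxplus_{i}\, a_0^i\odot x_j^{a_j^i}$. Choose $i_0$ with $a_j^{i_0}=\max_i a_j^i$ (breaking ties by the largest constant $a_0^{i_0}$); then for all sufficiently large $x_j\ge 0$ the single term $a_0^{i_0}+a_j^{i_0}x_j$ realizes the maximum, and if $a_j^{i_0}>0$ this is strictly increasing in $x_j$, so $p|_{\mathcal{W}_j}$ depends on $x_j$. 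The key points your plan misses are (a) the witness must be chosen inside $\mathcal{W}_j$ from the start rather than transported there, and (b) the index $i_0$ must be chosen by maximality of the exponent $a_j^{i_0}$ (an asymptotic argument), not by the strict-domination property coming from minimality of the representation.
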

\begin{proof}
The direction $(\Leftarrow)$ follows immediately. We must show $(\Rightarrow)$. Choose $j$ and assume that $p$ restricted to $\mathcal{W}_{j}$ is independent of $x_{j}$.
Suppose not all $a_{j}^i$ are $0$. Let $i_0$ be such that 
\[
a_{j}^{i_0}= \displaystyle{ \max_{i=1, \ldots, m}} a_{j}^i .
\]
If this maximum is attained in more than one value, we choose the $i_0$ for which $a_0^i$ is the biggest.  
Observe that $p(0, \ldots, 0, x_{j}, 0, \ldots, 0) =  \displaystyle{\boxplus_{i = 1 \ldots, m} }a_0^i \odot x_j^{a_j^i}$. If $a^{i_0}_{j}>0$, then $p(0, \ldots, 0, x_{j}, 0, \ldots, 0) = a_0^{i_0}\odot  x_{j}^{a_{j}^{i_0}}$ for all $x_{j} >\displaystyle{ \max_{\{ i\,|\, a_0^{i_0} \neq  a_0^{i} \}} }\frac{ a_0^i -  a_0^{i_0}}{ a_0^{i_0} -  a_0^{i} }$. Here we take the maximum over $i$ for which $a_0^{i_0} \neq  a_0^{i}$. For indices $i$ when $a_0^{i_0} =  a_0^{i}$,  $a_0^{i_0} + a_j^{i_0}x_j \geq a_0^{i} + a_j^{i} x_j$ for our choice of $i_0$. This shows that for $a^{i_0}_{j}>0$ the max-plus polynomial $p(x_1, d_1, \ldots, x_n, d_n)$ depends on $x_{j}$. By assumption $a^{i_0}_{j} \geq 0$. The only way the expression does not depend on $x_{j}$ is if $a^{i_0}_{j} =0$. 
\end{proof}
\begin{corollary}
The subsemiring of max-plus polynomials whose restriction to $\mathcal{W}_i$ is independent of $x_i$ for all $i$ contains precisely the max-plus polynomials of the form
 \[
\boxplus_{i = 1 \ldots, m} a_0^i \odot d_1^{b_1^i}\odot  \ldots  \odot d_n^{b_n^i}.
 \]
\end{corollary}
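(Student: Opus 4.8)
The plan is to deduce this directly from Lemma~\ref{minprebar}, together with a short verification that the described set really is a subsemiring. I would split the equality of sets into two inclusions. For $(\supseteq)$, any max-plus polynomial written in the form $\boxplus_{i=1,\ldots,m} a_0^i \odot d_1^{b_1^i}\odot \cdots \odot d_n^{b_n^i}$ involves no variable $x_j$ in any monomial, so its restriction to $\mathcal{W}_j$ — indeed its restriction to all of $\RR^{2n}$ — is trivially independent of $x_j$, for every $j$; hence every such polynomial lies in the subsemiring.

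For the reverse inclusion $(\subseteq)$, suppose $p$ is a max-plus polynomial whose restriction to $\mathcal{W}_i$ is independent of $x_i$ for every $i$. Passing to the minimal representation $\boxplus_{i=1,\ldots,m} a_0^i \odot x_1^{a_1^i}\odot d_1^{b_1^i}\odot \cdots \odot x_n^{a_n^i}\odot d_n^{b_n^i}$ of $p$, fix an index $j$ and apply Lemma~\ref{minprebar}: the hypothesis that $p|_{\mathcal{W}_j}$ is independent of $x_j$ forces $a_j^i = 0$ for all $i = 1,\ldots,m$. Since $j$ was arbitrary, every $x$-exponent vanishes, so the minimal representation of $p$ is exactly of the claimed form. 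The only subtle point — that ``admitting a representation of the stated form'' and ``having a minimal representation of the stated form'' coincide — is precisely what the combination of the two directions supplies, and this is the place where Lemma~\ref{minprebar} does the real work; I do not anticipate any further obstacle.

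Finally I would check that this collection of polynomials is a subsemiring of $\mathtt{MaxPlus}[x_1, d_1, \ldots, x_n, d_n]$, so that the wording of the statement is justified: it contains all constants (and the zero function, obtained by raising any generator to the $0$-th power); it is closed under $\boxplus$, since a tropical sum of two expressions involving no $x_i$ again involves no $x_i$; and it is closed under $\odot$, since a tropical product merely adds the $d$-exponents and introduces no $x$-variables. As each of these operations manifestly preserves the property of not depending on any $x_i$, no passage to minimal representations is needed at this step, which keeps the argument clean.
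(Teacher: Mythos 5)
Your proposal is correct and follows exactly the route the paper intends: the corollary is stated without proof precisely because it is the immediate consequence of applying Lemma~\ref{minprebar} to every index $j$, which is what you do. The additional verification that the set is closed under $\boxplus$ and $\odot$ is routine but harmless.
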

We denote this semiring by $D_n$.

\begin{proposition}\label{generator}
Let $D_n^{S_n}$ denote the subring of elements of $D_n$ which are invariant under the action of $S_n$. Then $\sigma_{(0,1)}, \sigma_{(0,1)^2}, \ldots, \sigma_{(0,1)^n}$ generate $D_n^{S_n}$, in the sense that any element of $D_n^{S_n}$ is of the form
\[
\boxplus_{i = 1 \ldots, m} a_0^i \odot \sigma_{(0,1)}^{b_1^i}\odot  \ldots  \odot \sigma_{(0,1)^n}^{b_n^i},
\]
where $a_0^i\in \RR$ and all $b_j^i$ nonnegative integers.
\end{proposition}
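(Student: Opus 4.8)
The plan is to recognize this proposition as essentially the Fundamental Theorem of Symmetric Max-Plus Polynomials (Theorem~\ref{fundamentalminplus}) transported to the $d$-variables, so that almost all of the work lies in setting up the right identifications.

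First I would identify $D_n^{S_n}$ with the semiring of symmetric max-plus polynomials in $\mathtt{MaxPlus}[d_1,\ldots,d_n]$. By the Corollary preceding the proposition, every element of $D_n$ is represented by an expression $\boxplus_{i=1,\ldots,m} a_0^i \odot d_1^{b_1^i}\odot\cdots\odot d_n^{b_n^i}$; in particular, as a function on $([0,\infty)\times[0,\infty))^n$ it factors through the projection onto the $d$-coordinates. The action of $\pi\in S_n$ sends such a $p$ to the function $(d_1,\ldots,d_n)\mapsto p(d_{\pi(1)},\ldots,d_{\pi(n)})$, so $p\in D_n^{S_n}$ precisely when this function is symmetric in $d_1,\ldots,d_n$. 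Hence $D_n^{S_n}$ is exactly the image of the symmetric max-plus polynomials in $\mathtt{MaxPlus}[d_1,\ldots,d_n]$ under the natural inclusion into $D_n$, and functional equivalence on one side matches functional equivalence on the other.

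Next I would check that, under this identification, $\sigma_{(0,1)^k}$ is precisely the $k$-th elementary symmetric max-plus polynomial $\sigma_k(d_1,\ldots,d_n)$. The orbit in $\mathscr{E}_n/S_n$ defining $\sigma_{(0,1)^k}$ consists of all $0/1$ matrices with exactly $k$ rows equal to $(0,1)$ and the remaining rows equal to $(0,0)$; such a matrix contributes the monomial $\bigodot_{i\in S} d_i$ for a $k$-subset $S\subseteq\{1,\ldots,n\}$, and summing tropically over the whole orbit gives $\boxplus_{|S|=k}\bigodot_{i\in S} d_i=\sigma_k(d_1,\ldots,d_n)$.

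Finally, given $p\in D_n^{S_n}$, Theorem~\ref{fundamentalminplus} applied to the variables $d_1,\ldots,d_n$ writes $p$ as a max-plus polynomial in $\sigma_1(d),\ldots,\sigma_n(d)$, i.e.\ in the form $\boxplus_{i=1,\ldots,m} a_0^i\odot \sigma_1^{b_1^i}\odot\cdots\odot\sigma_n^{b_n^i}$ with $a_0^i\in\RR$ and $b_j^i$ nonnegative integers; re-substituting $\sigma_k=\sigma_{(0,1)^k}$ yields the asserted presentation. I do not expect a genuine obstacle: the only point requiring care is the bookkeeping in the first paragraph, namely confirming that $2$-symmetry of an element of $D_n$ reduces to ordinary symmetry in the $d_i$ and that passing between expressions and functions respects the equivalence relations, so that Theorem~\ref{fundamentalminplus} applies verbatim.
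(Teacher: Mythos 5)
Your proposal is correct and follows essentially the same route as the paper: reduce via the preceding corollary to symmetric max-plus polynomials in $d_1,\ldots,d_n$, identify $\sigma_{(0,1)^k}$ with the elementary symmetric max-plus polynomial $\sigma_k(d_1,\ldots,d_n)$, and invoke Theorem~\ref{fundamentalminplus}. The paper's proof is just a terser version of exactly this argument.
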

\begin{proof}
According to Lemma~\ref{minprebar} the 2-symmetric max-plus polynomials on $B_n$ are precisely symmetric max-plus polynomials in variables $d_1, \ldots, d_n$. We can therefore apply Theorem~\ref{fundamentalminplus} with $\sigma_{(0,1)^k}$ playing the same role of $\sigma_k$. 
\end{proof}

Now that we have identified functions for each $B_n$ separately, we must assemble them to get functions on the barcode space. When $n \geq m$, the natural inclusion
\[
\begin{array}{rcl}
B_m &\to& B_n \\
\{(x_1, d_1), \ldots, (x_m, d_m)\} &\mapsto & \{(x_1, d_1), \ldots, (x_m, d_m), (0, 0), \ldots, (0,0)\} 
\end{array}
\]
induces $j_{n,m} \colon D_n \to D_m$, defined by 
\[
j_{n,m}(f)((x_1, d_1), \ldots, (x_m, d_m))  = f ((x_1, d_1), \ldots, (x_m, d_m), (0, 0), \ldots, (0,0)),
\]
The map $j_{n, m}$ is $S_m$-equivariant ($S_m$ acts by permuting the first $m$ pairs of variables). It follows that we may construct composites
\[
i_n^{m} \colon  D_n^{S_n}  \hookrightarrow D_n^{S_m} \xrightarrow{i_{n, m}^{S_m}} D_m^{S_m}
\]
and an inverse system
\[
\ldots \xrightarrow{i_{n}^{n+1}}  D_n^{S_n}  \xrightarrow{i_{n-1}^{n}}  D_{n-1}^{S_{n-1}}  \xrightarrow{i_{n-2}^{n-1}} \ldots \xrightarrow{\, \,i_{1}^{2}\,\,}  D_{1}^{S_{1}}. 
\]
Observe that 
\[
i_{n-1}^n (\sigma_{(0,1)^k}) = i_{n-1}^n (\sigma_{{\scriptsize \underbrace{(0,1),...,(0,1)}_\text{k}}}) = \sigma_{(0,1)^k}\, \,\textrm{ for }\,  k\neq n \quad \textrm{and} \quad i_{n-1}^{n} (\sigma_{(0,1)^{n}}) = \sigma_{(0,1)^{n-1}}. 
\]
Therefore $i_{n-1}^{n}$ are surjections for all positive integers $n$.
We do not wish to include functions with infinitely many variables, such as $\max_{i\in \NN} x_i$,  and for this reason we take a filtered inverse limit of these objects instead of the inverse limit. 
The total degree is the filter we use. Recall that $\operatorname{Deg} p$ of a max-plus polynomial
\[
p(x_1, x_2, \ldots, x_n) = a_1\odot x_1^{i_1^1} x_2^{i_2^1} \ldots x_n^{i_n^1} \boxplus a_2\odot x_1^{i_1^2} x_2^{i_2^2} \ldots x_n^{i_n^2}\boxplus \ldots \boxplus a_m\odot x_1^{i_1^m} x_2^{i_2^m} \ldots x_n^{i_n^m}
\] 
is $\operatorname{max}_{1\leq j \leq m}(i_1^j+ i_2^j + \ldots + i_n^j)$. Let
 \[
 {}_kD_n = \{ p\in D_n\,|\, \textrm{Deg}\,p \leq k \}
 \]
 Map $i_{n-1}^{n}$ induces ${}_ki_{n-1}^{n}\colon {}_kD_n^{S_n}  \xrightarrow{{}_ki_{n-1}^{n}}  {}_kD_{n-1}^{S_{n-1}}$. We denote the inverse limit of this system by $\mathscr{D}^k$. The space of max-plus polynomials on the barcode space, $\mathscr{D}$, is precisely $ \displaystyle{  \bigcup_{k=1}^\infty \mathscr{D}^k}$. 
 
\begin{definition}
A semiring $(\mathscr{R}, +, \cdot)$ is called \emph{filtered} if there exists such a family of subsemirings $\{\mathscr{R}_d \}_{d\in \NN}$ of $(\mathscr{R}, +, \cdot)$ for operation $+$ that
\begin{itemize}
\item
$\mathscr{R}_d \subset \mathscr{R}_{d'}$ for $d\leq d'$,
\item
$\mathscr{R} = \bigcup_d \mathscr{R}_d$, 
\item
$\mathscr{R}_d \cdot \mathscr{R}_{d'} \subset \mathscr{R}_{d+d'}$ for all $d, d' \in \NN$.
\end{itemize}
\end{definition}
\begin{theorem}\label{maxplusgen}
Max-plus polynomials on the barcode space, $\mathscr{D}$, have the structure of a filtered semiring. They are generated by elements of the form $\sigma_{(0,1)^n}$, where $n$ is a positive integer.
\end{theorem}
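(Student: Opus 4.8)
The statement has two halves: that $\mathscr{D}$ is a filtered semiring, and that it is generated by the elements $\sigma_{(0,1)^n}$ with $n\ge 1$. The plan is to use $\{\mathscr{D}^k\}_{k\in\NN}$ itself as the filtration, settle the filtered-semiring axioms by degree bookkeeping, and reduce the generation claim to Proposition~\ref{generator} applied at a single level.

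For the filtration: the inclusions ${}_kD_n^{S_n}\subseteq{}_{k+1}D_n^{S_n}$ are compatible with the truncated maps, so a compatible family representing an element of $\mathscr{D}^k$ also represents one of $\mathscr{D}^{k+1}$; this gives $\mathscr{D}^k\subseteq\mathscr{D}^{k+1}$, and $\mathscr{D}=\bigcup_k\mathscr{D}^k$ by construction. Each $i_{n-1}^n$ is induced by setting the last coordinate pair to $(0,0)$, hence is a homomorphism for both $\boxplus$ and $\odot$ and sends compatible families to compatible families; combined with the level-wise identities $\deg(p\boxplus q)=\max(\deg p,\deg q)$ and $\deg(p\odot q)=\deg p+\deg q$ (legitimate because, as recalled in Section~\ref{sec:back}, degree is constant on functional-equivalence classes), this shows that each $\mathscr{D}^k$ is closed under $\boxplus$ and that $\mathscr{D}^k\odot\mathscr{D}^{k'}\subseteq\mathscr{D}^{k+k'}$, so $\mathscr{D}$ is a filtered semiring.

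For the generation the crux is a stabilization: \emph{for $n\ge k$ the level-$k$ projection $\mathscr{D}^k\to{}_kD_k^{S_k}$ is an isomorphism}. To get it, note that a degree-$\le k$ element $p$ of $D_n^{S_n}$ with $n\ge k$ depends, on the barcode space, only on the $k$ largest of $d_1,\ldots,d_n$: by Proposition~\ref{generator}, $p$ is a max-plus polynomial in $\sigma_{(0,1)},\ldots,\sigma_{(0,1)^n}$, and since $\sigma_{(0,1)^j}$ raises the total degree of any monomial containing it by $j$, the bound $\deg p\le k$ confines the expression to $\sigma_{(0,1)},\ldots,\sigma_{(0,1)^k}$; each of those, evaluated at a point with all $d_i\ge 0$, equals the sum of its $j\le k$ largest coordinates and is therefore unchanged when all but the $k$ largest coordinates are set to $0$. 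Consequently, for $n\ge k+1$ the truncated map ${}_ki_{n-1}^n$ is injective ($p$ is recovered from its restriction to the ``top-$k$'' locus, which is visible in $i_{n-1}^n(p)$) and surjective (each $\sigma_{(0,1)^j}$ with $j\le k$ lifts along $i_{n-1}^n(\sigma_{(0,1)^j})=\sigma_{(0,1)^j}$); hence the inverse system defining $\mathscr{D}^k$ is built from isomorphisms in all levels $\ge k$, which is the stabilization. Now take $f\in\mathscr{D}$, choose $k$ with $f\in\mathscr{D}^k$, and let $f_k\in{}_kD_k^{S_k}$ be its level-$k$ component; by Proposition~\ref{generator} we may write $f_k=F(\sigma_{(0,1)},\ldots,\sigma_{(0,1)^k})$ for a max-plus polynomial expression $F$, and every monomial of $F$ has degree $\le k$ in the $d$-variables since degree is constant on functional-equivalence classes. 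Reading $\sigma_{(0,1)^j}$ now as the element of $\mathscr{D}$ with $n$-th component $\sigma_{(0,1)^{\min(n,j)}}$, the element $F(\sigma_{(0,1)},\ldots,\sigma_{(0,1)^k})$ formed inside $\mathscr{D}$ lies in $\mathscr{D}^k$ and has level-$k$ component $f_k$, so by injectivity of the level-$k$ projection it equals $f$. Thus $f$ is a max-plus polynomial in $\sigma_{(0,1)},\ldots,\sigma_{(0,1)^k}$, and letting $k$ vary shows that $\{\sigma_{(0,1)^n}\}_{n\ge1}$ generates $\mathscr{D}$.

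The step that will take real work is this stabilization, and inside it the injectivity of the truncations on degree-$\le k$ elements. This is exactly where the standing hypothesis $x_i,d_i\ge 0$ matters: it makes ``discard all but the $k$ longest bars'' an operation that loses no information, so that the maps $i_{n-1}^n$ behave well once restricted to the barcode space, as anticipated in the introduction. Everything else---the filtered structure, and the passage from a single level back to the inverse limit---is formal.
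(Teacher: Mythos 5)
Your argument is correct, and it follows the route the paper itself sets up: the degree filtration for the semiring structure, and Proposition~\ref{generator} together with the computed action of $i_{n-1}^{n}$ on the $\sigma_{(0,1)^j}$ for generation. In fact the paper states Theorem~\ref{maxplusgen} with no proof at all, treating it as a summary of the preceding construction; the one substantive ingredient that construction leaves implicit is exactly the stabilization you isolate --- that a degree-$\le k$ element of $D_n^{S_n}$ involves only $\sigma_{(0,1)},\ldots,\sigma_{(0,1)^k}$ and is therefore insensitive to discarding all but the $k$ largest lengths, making ${}_ki_{n-1}^{n}$ an isomorphism for $n\ge k+1$ --- and your justification of it (weighted degree of $\sigma_{(0,1)^j}$ equals $j$, plus nonnegativity of the $d_i$) is sound. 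So this is a correct proof of the statement, supplying details the paper omits rather than diverging from it.
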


\section{Stability of Max-Plus Polynomials}
Stability is the key property that coordinate functions should satisfy. In this section we prove that the functions from $\mathscr{D}$ are stable with respect to the bottleneck and Wasserstein distances.

\begin{theorem}[Bottleneck stability of max-plus polynomials]\label{maxplus}
Let $\mathscr{D}$ be the filtered semiring of max-plus polynomials (see Theorem~\ref{maxplusgen}). If $F \in \mathscr{D}$, then a constant $C$ exists such that
\[
|F(\mathscr{B}_1) - F(\mathscr{B}_2)| \leq C \textrm{d}_\infty(\mathscr{B}_1, \mathscr{B}_2)
\]
for any pair of barcodes $\mathscr{B}_1$ and  $\mathscr{B}_2$.
\end{theorem}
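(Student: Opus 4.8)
The plan is to reduce the general statement to the case of the generators $\sigma_{(0,1)^k}$, using the filtered‑semiring structure of $\mathscr{D}$ from Theorem~\ref{maxplusgen}. An element $F\in\mathscr{D}$ lies in some $\mathscr{D}^k$, hence is represented, on each $B_n$ compatibly under the maps ${}_ki_{n-1}^n$, by a max‑plus polynomial in the $\sigma_{(0,1)^j}$, i.e.\ a finite expression built from these generators and real constants using only $\odot$ (ordinary addition) and $\boxplus$ (maximum). So the first step is to establish stability of each individual generator: I would show that for every $k$ there is a constant $C_k$ with
\[
|\sigma_{(0,1)^k}(\mathscr{B}_1)-\sigma_{(0,1)^k}(\mathscr{B}_2)|\le C_k\,\mathrm{d}_\infty(\mathscr{B}_1,\mathscr{B}_2).
\]
Recall $\sigma_{(0,1)^k}$ is the sum of the $k$ largest bar lengths. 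Given a bijection $\theta$ between subsets realizing (nearly) the bottleneck distance, each matched interval has its length changed by at most $2\,\mathrm{d}_\infty(\mathscr{B}_1,\mathscr{B}_2)$ (from $\mathrm{d}_\infty((x_1,d_1),(x_2,d_2))\ge |d_1-d_2|/2$ after unwinding the definition — one must be slightly careful here, since the stated $\mathrm{d}_\infty$ controls $|x_1-x_2|$ and $|d_1-d_2+x_1-x_2|$, so $|d_1-d_2|\le 2\,\mathrm{d}_\infty$), while an unmatched interval of length $d$ contributes $\mathrm{d}_\infty(\cdot,\Delta)=d/2\le\mathrm{d}_\infty(\mathscr{B}_1,\mathscr{B}_2)$ to the penalty, so deleting or inserting it changes the relevant length sums by at most $2\,\mathrm{d}_\infty(\mathscr{B}_1,\mathscr{B}_2)$. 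A short sorting/rearrangement argument then shows the top‑$k$ length sums differ by at most $2k\,\mathrm{d}_\infty(\mathscr{B}_1,\mathscr{B}_2)$, so $C_k=2k$ works.

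The second step is to propagate stability through the algebraic operations. If $G,H$ are $1$‑Lipschitz–up–to–constant in the sense of the theorem, with constants $C_G,C_H$, then $G\odot H=G+H$ satisfies the bound with constant $C_G+C_H$, and $G\boxplus H=\max(G,H)$ satisfies it with constant $\max(C_G,C_H)$, using the elementary inequality $|\max(a,b)-\max(a',b')|\le\max(|a-a'|,|b-b'|)$; scaling by adding a constant $a_0^i$ does nothing to the Lipschitz constant, and raising a generator to a power $b$ is $b$‑fold $\odot$, multiplying the constant by $b$. Since $F$ is a finite such expression in the $\sigma_{(0,1)^j}$ with $j\le k$, induction on the structure of the expression yields a finite constant $C$ — explicitly one can take $C=2k\cdot D$ where $D=\deg F$, though any finite bound suffices for the statement.

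The one genuine subtlety — and the step I expect to require the most care — is well‑definedness across the barcode space: $F\in\mathscr{D}$ is a compatible family of functions on the $B_n$, and two barcodes $\mathscr{B}_1,\mathscr{B}_2$ may a priori have different numbers of intervals. This is handled by the equivalence relation defining $B$ (padding with zero‑length intervals $(0,0)$) together with the fact that the maps $i_{n-1}^n$ send $\sigma_{(0,1)^n}\mapsto\sigma_{(0,1)^{n-1}}$ and fix $\sigma_{(0,1)^j}$ for $j<n$; so after padding both barcodes to have the same (large) number $N$ of intervals, $F$ is represented by a single max‑plus polynomial in $\sigma_{(0,1)^1},\dots,\sigma_{(0,1)^N}$ on $B_N$, and the two previous steps apply verbatim. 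I would also note that because $\sigma_{(0,1)^k}$ only reads off lengths, the potential worry about the $x$‑coordinates of unmatched intervals never arises — which is exactly why $\mathscr{D}$ consists only of length functions (the content of Lemma~\ref{minprebar} and its corollary), and is what makes this theorem cleaner than the rational‑function case treated later.
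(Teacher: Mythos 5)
Your proposal is correct and follows essentially the same route as the paper: the paper's Lemma~5.2 is exactly your first step (stability of each $\sigma_{(0,1)^k}$ with constant $2k$, via padding with zero-length intervals, an optimal matching, and the bound $|d_i-d_i'|/2\le \mathrm{d}_\infty$), and the paper's proof of the theorem is exactly your second step (propagating stability through $\odot$ with constant $C_1+C_2$ and through $\boxplus$ with constant $\max(C_1,C_2)$). The well-definedness/padding issue you flag is handled in the paper inside the lemma's proof in the same way you suggest.
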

\begin{lemma}\label{lemmamaxplus}
For any pair of barcodes $\mathscr{B}_1$ and  $\mathscr{B}_2$ and any $n\in\NN$, the difference between the total length of the longest $n$ bars in $\mathscr{B}_1$ and $\mathscr{B}_2$ can be bounded from above by $2n \textrm{d}_\infty(\mathscr{B}_1, \mathscr{B}_2)$:
\[
|\sigma_{(0,1)^n}(\mathscr{B}_1) - \sigma_{(0,1)^n}(\mathscr{B}_2)| \leq 2n \textrm{d}_\infty(\mathscr{B}_1, \mathscr{B}_2).
\]
\end{lemma}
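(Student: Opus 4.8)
The plan is to reduce the estimate to the multiset of bar lengths and then read everything off from an optimal partial matching realizing the bottleneck distance. Write $\delta=\textrm{d}_\infty(\mathscr{B}_1,\mathscr{B}_2)$ and fix a bijection $\theta\colon A'\to B'$, with $A'\subseteq A$ and $B'\subseteq B$ subsets of the index sets of $\mathscr{B}_1$ and $\mathscr{B}_2$, attaining $P_\infty(\theta)=\delta$ (the minimum is attained because the barcodes are finite). For a bar $a$ write $\ell(a)$ for its length. Since $\sigma_{(0,1)^n}$ is the sum of the $n$ longest bar lengths (equivalently, of all bar lengths when there are at most $n$ bars), it depends only on the multiset of lengths, and it suffices to prove the one-sided inequality $\sigma_{(0,1)^n}(\mathscr{B}_1)\le\sigma_{(0,1)^n}(\mathscr{B}_2)+2n\delta$, the reverse following by exchanging $\mathscr{B}_1$ and $\mathscr{B}_2$. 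From $P_\infty(\theta)=\delta$ I would record two facts: if $(x,d)$ and $(x',d')$ are matched by $\theta$, then $|x-x'|\le\delta$ and $|d-d'+x-x'|\le\delta$, hence $|d-d'|\le 2\delta$ by the triangle inequality; and if $(x,d)$ is an unmatched bar of either barcode, then $d/2=\textrm{d}_\infty((x,d),\Delta)\le\delta$, hence $d\le 2\delta$.

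Next, let $S$ be the set of the (at most $n$) longest bars of $\mathscr{B}_1$, so that $\sigma_{(0,1)^n}(\mathscr{B}_1)=\sum_{a\in S}\ell(a)$ with $|S|\le n$, and split $S=S_m\sqcup S_u$ according to whether $a$ is matched by $\theta$. By the two facts above, $\ell(a)\le\ell(\theta(a))+2\delta$ for $a\in S_m$ and $\ell(a)\le 2\delta$ for $a\in S_u$, so summing over $S$ gives
\[
\sigma_{(0,1)^n}(\mathscr{B}_1)\ \le\ \sum_{a\in S_m}\ell(\theta(a))\ +\ 2\delta\,|S|\ \le\ \sum_{a\in S_m}\ell(\theta(a))\ +\ 2n\delta .
\]
Since $\theta$ is injective, $\{\theta(a):a\in S_m\}$ is a set of $|S_m|\le n$ distinct bars of $\mathscr{B}_2$, and because lengths are nonnegative its total length is at most the sum of the $n$ longest lengths of $\mathscr{B}_2$, i.e.\ at most $\sigma_{(0,1)^n}(\mathscr{B}_2)$. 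Combining this with the display gives the required bound.

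I do not expect a genuine obstacle: once the matching is fixed the argument is elementary bookkeeping. The points that need care are deriving $|d-d'|\le 2\delta$ from the asymmetric definition of $\textrm{d}_\infty$ on pairs of intervals (this is where the factor $2$ first appears), handling barcodes with fewer than $n$ bars uniformly, which is legitimate exactly because $\sigma_{(0,1)^n}$ descends to $B$ (appending zero-length bars changes nothing), and checking that each of the at most $n$ selected bars of $\mathscr{B}_1$ contributes at most one term $2\delta$ of slack whether or not it is matched, which is what fixes the constant at $2n$.
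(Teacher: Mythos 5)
Your proof is correct and follows essentially the same route as the paper: fix an optimal matching, bound the length discrepancy of each matched pair by $2\textrm{d}_\infty$ and the length of each unmatched bar by $2\textrm{d}_\infty$, then sum over the $n$ longest bars of $\mathscr{B}_1$ and compare against $\sigma_{(0,1)^n}(\mathscr{B}_2)$. The only cosmetic differences are that the paper pads with zero-length intervals instead of splitting into matched and unmatched bars, and derives $|d_i-d_i'|\le 2\textrm{d}_\infty$ by a short case analysis rather than the triangle inequality.
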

\begin{proof}
Let $\mathscr{B}_1 = \{ (x_1, d_1), \ldots, (x_{l_1}, d_{l_1}) \}$ and  $\mathscr{B}_2 = \{(x_1', d_1'), \ldots, (x_{l_2}', d_{l_2}')\}$ be such that ${\mathscr{B}_1 \neq \mathscr{B}_2}$ and $d_1 \geq d_2 \geq \ldots \geq d_{l_1} \geq 0$. 

Without loss of generality assume that $\sigma_{(0,1)^n} (\mathscr{B}_1) \geq \sigma_{(0,1)^n} (\mathscr{B}_2)$. If $n > l_1$ or $n > l_2$, we add 0 length intervals to $\mathscr{B}_1$, $\mathscr{B}_2$ to achieve that their length is $n$.

Let $\theta$ be a bijection where the penalty is minimal, i.e.\ where ${P_\infty(\theta) = \textrm{d}_\infty(\mathscr{B}_1, \mathscr{B}_2)}$. Assume that $\theta$ matches $(x_1, d_1)$ with $(x_1', d_1')$, $(x_2, d_2)$ with $(x_2', d_2')$, \ldots, $(x_n, d_n)$ with $(x_n', d_n')$ (some of these intervals might be 0 length intervals). Of course, we might have to relabel bars in $\mathscr{B}_2$ for this to hold.  
Note that for those $i$, for which either $d_i$ or $d_{i}'$ equals 0, we automatically have
\[
|\frac{d_i-d_i'}{2}| \leq \textrm{d}_\infty(\mathscr{B}_1, \mathscr{B}_2).
\]
For all other $1 \leq i \leq n$ in this matching,
\begin{equation}\label{eq1}
|\frac{d_i-d_i'}{2}| \leq \max (|x_i-x_i'|, |d_i-d_i'+x_i-x_i'|) \leq \textrm{d}_\infty(\mathscr{B}_1, \mathscr{B}_2).
\end{equation}
By the definition of a minimal matching $\displaystyle \max_{i=1, \ldots, m}(|x_i-x_i'|, |d_i-d_i'+x_i-x_i'|) \leq \textrm{d}_\infty(\mathscr{B}_1, \mathscr{B}_2)$. So we must only prove the first inequality. Notice that if $|\frac{d_i-d_i'}{2}| \leq |x_i-x_i'|$, this follows automatically. If $|\frac{d_i-d_i'}{2}| >|x_i-x_i'|$, then
\[
|\frac{d_i-d_i'}{2}| \leq |d_i-d_i'+x_i-x_i'|,
\] 
proving Inequality~\ref{eq1}.

Then 
\[
\begin{array}{lcl}
n\textrm{d}_\infty(\mathscr{B}_1, \mathscr{B}_2) & \geq & \sum_{i=1}^n\frac{(d_i - d_i')}{2} \\
 &=& \frac{1}{2} (\sum_{i=1}^n d_i - \sum_{i=1}^n d_i') \\
& = & \frac{1}{2} (\sigma_{(0,1)^n} (\mathscr{B}_1)-\sum_{i=1}^n d_i')\\
& \geq & \frac{1}{2} (\sigma_{(0,1)^n} (\mathscr{B}_1)-\sigma_{(0,1)^n} (\mathscr{B}_2)).\\
\end{array}
\] 
The last inequality holds since $\sum_{i=1}^n d_i' \leq \sigma_{(0,1)^n} (\mathscr{B}_2)$. Also note that we chose $d_1, \ldots, d_n$ in a way that $\sigma_{(0,1)^n} (\mathscr{B}_1)= \sum_{i=1}^n d_i$.

We deduce that 
\[
|\sigma_{(0,1)^n} (\mathscr{B}_1)-\sigma_{(0,1)^n} (\mathscr{B}_2)| \leq 2n \textrm{d}_\infty(\mathscr{B}_1, \mathscr{B}_2),
\]
proving that $\sigma_{(0,1)^n}$ is Lipschitz with constant $2n$.
\end{proof}

\begin{proof}[Proof of Theorem~\ref{maxplus}]
Suppose $F_1$ and $F_2$ are such that $C_1$ and $C_2$ exist such that
\[
|F_1(\mathscr{B}_1) - F_1(\mathscr{B}_2)| \leq C_1 \textrm{d}_\infty(\mathscr{B}_1, \mathscr{B}_2)
\]
and
\[
|F_2(\mathscr{B}_1) - F_2(\mathscr{B}_2)| \leq C_2 \textrm{d}_\infty(\mathscr{B}_1, \mathscr{B}_2)
\]
for any any pair of barcodes $\mathscr{B}_1$ and  $\mathscr{B}_2$. 

Let $H = F_1 + F_2$. Then
\[
\begin{array}{lcl}
|H(\mathscr{B}_1) - H(\mathscr{B}_2)| &= & |F_1(\mathscr{B}_1) + F_2(\mathscr{B}_1) - F_1(\mathscr{B}_2) - F_2(\mathscr{B}_2)|  \\
& \leq & |F_1(\mathscr{B}_1) - F_1(\mathscr{B}_2)| + |F_2(\mathscr{B}_1) - F_2(\mathscr{B}_2)|  \\
& \leq & C_1 \textrm{d}_\infty(\mathscr{B}_1, \mathscr{B}_2) + C_2 \textrm{d}_\infty(\mathscr{B}_1, \mathscr{B}_2)  \\
&\leq & (C_1 + C_2) \textrm{d}_\infty(\mathscr{B}_1, \mathscr{B}_2).
\end{array}
\]

Let $H = \max (F_1, F_2)$. Then
\[
F_1(\mathscr{B}_2) \leq F_1(\mathscr{B}_1)+ |F_1(\mathscr{B}_2)-F_1(\mathscr{B}_1)| \leq H(\mathscr{B}_1)+  |F_1(\mathscr{B}_2)-F_1(\mathscr{B}_1)|,
\]
and similarly $F_2(\mathscr{B}_2) \leq  H(\mathscr{B}_1) + |F_2(\mathscr{B}_2)-F_2(\mathscr{B}_1)|$. It follows that
\[
H(\mathscr{B}_2) \leq H(\mathscr{B}_1) +\max (|F_1(\mathscr{B}_2)-F_1(\mathscr{B}_1)|, |F_2(\mathscr{B}_2)-F_2(\mathscr{B}_1)|), 
\]
and by symmetry we conclude that
\[
|H(\mathscr{B}_1) - H(\mathscr{B}_2)| \leq \max(C_1, C_2) \textrm{d}_\infty(\mathscr{B}_1, \mathscr{B}_2).
\]

Any function $F$ from the filtered semiring of max-plus polynomials $\mathscr{D}$ is generated by taking maxima and sums of $\sigma_{(0,1)^n}$ and constants. Since stability is preserved under these two operations and since $\sigma_{(0,1)^n}$ are stable according to Lemma~\ref{lemmamaxplus}, $F$ is also stable.
\end{proof}

\begin{theorem}[Wasserstein stability of max-plus polynomials]\label{wasmaxplus}
Let $\mathscr{D}$ be the filtered semiring of max-plus polynomials. For $F \in \mathscr{D}$ and $p \geq 1$, a constant $C$ exists such that
\[
|F(\mathscr{B}_1) - F(\mathscr{B}_2)| \leq C\, \textrm{d}_p(\mathscr{B}_1, \mathscr{B}_2)
\]
for any pair of barcodes $\mathscr{B}_1$ and  $\mathscr{B}_2$.
\end{theorem}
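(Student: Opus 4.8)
The plan is to deduce Theorem~\ref{wasmaxplus} from the bottleneck case, Theorem~\ref{maxplus}, via the elementary comparison $\textrm{d}_\infty(\mathscr{B}_1,\mathscr{B}_2)\le \textrm{d}_p(\mathscr{B}_1,\mathscr{B}_2)$, and, for readers who want an explicit constant, to re-run the argument of Lemma~\ref{lemmamaxplus} directly for $\textrm{d}_p$.

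For the first route, fix $\mathscr{B}_1$, $\mathscr{B}_2$ and any admissible bijection $\theta$ between subsets of the index sets. The penalty of $\theta$ is built from the same finite family of nonnegative numbers $a_1(\theta), a_2(\theta),\dots$ (the matching penalties $\textrm{d}_\infty(I_a,J_{\theta(a)})$ together with the deletion penalties $\textrm{d}_\infty(\cdot,\Delta)$ on both sides), with $P_\infty(\theta)=\max_k a_k(\theta)$ and $P_p(\theta)=\sum_k a_k(\theta)^p$. Since $p\ge 1$ we have $\max_k a_k(\theta)\le\bigl(\sum_k a_k(\theta)^p\bigr)^{1/p}$, so $P_\infty(\theta)\le P_p(\theta)^{1/p}$ for every $\theta$; taking $\theta$ to be $\textrm{d}_p$-optimal gives $\textrm{d}_\infty(\mathscr{B}_1,\mathscr{B}_2)\le\textrm{d}_p(\mathscr{B}_1,\mathscr{B}_2)$. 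Combined with Theorem~\ref{maxplus} this yields the claim with the very same constant $C$.

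For the second route I would prove the $\textrm{d}_p$-analogue of Lemma~\ref{lemmamaxplus}: $|\sigma_{(0,1)^n}(\mathscr{B}_1)-\sigma_{(0,1)^n}(\mathscr{B}_2)|\le 2n^{1-1/p}\,\textrm{d}_p(\mathscr{B}_1,\mathscr{B}_2)$. As there, assume $\sigma_{(0,1)^n}(\mathscr{B}_1)\ge\sigma_{(0,1)^n}(\mathscr{B}_2)$, pad both barcodes with zero-length intervals to length at least $n$, pick a $\textrm{d}_p$-optimal matching $\theta$, and relabel so that the $n$ longest bars $(x_i,d_i)$ of $\mathscr{B}_1$ are each either matched to a bar $(x_i',d_i')$ of $\mathscr{B}_2$ or deleted, in which case put $d_i'=0$. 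The same pointwise estimate as in Lemma~\ref{lemmamaxplus} gives $\tfrac{d_i-d_i'}{2}\le a_i$, where $a_i$ is the penalty term of $\theta$ attached to $(x_i,d_i)$ (a matching term, or the deletion term $d_i/2$). Summing over $i$ and applying H\"older's inequality in the form $\sum_{i=1}^n a_i\le n^{1-1/p}\bigl(\sum_{i=1}^n a_i^p\bigr)^{1/p}$, together with $\sum_{i=1}^n a_i^p\le P_p(\theta)$ (the $a_i^p$ are distinct nonnegative summands of $P_p(\theta)$) and $\sum_{i=1}^n d_i'\le\sigma_{(0,1)^n}(\mathscr{B}_2)$ (the $d_i'$ are at most $n$ lengths of genuine bars of $\mathscr{B}_2$, or zero), reproduces the chain of inequalities of the bottleneck proof and gives the stated bound.

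Either way, the reduction from the generators to all of $\mathscr{D}$ is exactly the end of the proof of Theorem~\ref{maxplus}: stability with respect to $\textrm{d}_p$ is preserved under $F_1+F_2$ (with constant $C_1+C_2$) and under $\max(F_1,F_2)$ (with constant $\max(C_1,C_2)$), and by Theorem~\ref{maxplusgen} every $F\in\mathscr{D}$ is obtained from the $\sigma_{(0,1)^n}$ and constants using these operations. The only point requiring care is the bookkeeping around unmatched bars --- checking that the $n$ penalty contributions singled out for the longest bars of $\mathscr{B}_1$ really are summands of $P_p(\theta)$ and that pairing deleted bars with phantom zero-length bars keeps $\sum_{i=1}^n d_i'\le\sigma_{(0,1)^n}(\mathscr{B}_2)$; there is no deeper obstacle.
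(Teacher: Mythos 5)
Your proposal is correct, and your first route is genuinely different from the paper's argument. The paper does not pass through the comparison $\textrm{d}_\infty(\mathscr{B}_1,\mathscr{B}_2)\le \textrm{d}_p(\mathscr{B}_1,\mathscr{B}_2)$; instead it reruns the computation of Lemma~\ref{lemmamaxplus} with $p$-th powers, bounding $(\sigma_{(0,1)^n}(\mathscr{B}_1)-\sigma_{(0,1)^n}(\mathscr{B}_2))^p$ by $2^p n^{p-1}P_p(\theta)^p$ via H\"older's inequality and then taking $p$-th roots --- which is precisely your second route, and your constant $2n^{1-1/p}$ agrees with the paper's $2n^{(p-1)/p}$. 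The trade-off between the two approaches: the reduction via $\textrm{d}_\infty\le \textrm{d}_p$ (which you justify correctly, since $\max_k a_k\le(\sum_k a_k^p)^{1/p}$ applied to a $\textrm{d}_p$-optimal matching) is shorter and requires no new estimate beyond the already-proved bottleneck case, but it only inherits the bottleneck constant $2n$ for the generator $\sigma_{(0,1)^n}$; the direct H\"older computation yields the sharper constant $2n^{(p-1)/p}$, which degrades more slowly in $n$ and reduces to $2$ at $p=1$. Since the theorem asserts only the existence of some constant $C$, either route is a complete proof. Your bookkeeping is also sound: deleted bars contribute the summand $(d_i/2)^p$ to $P_p(\theta)$, which matches your convention $d_i'=0$, the $n$ selected penalty terms are distinct summands of $P_p(\theta)$, and $\sum_{i=1}^n d_i'\le\sigma_{(0,1)^n}(\mathscr{B}_2)$ because the $d_i'$ are lengths of at most $n$ distinct bars of $\mathscr{B}_2$ (or zero). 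The final reduction from the generators $\sigma_{(0,1)^n}$ to all of $\mathscr{D}$ via closure of $\textrm{d}_p$-stability under sums and maxima is exactly what the paper invokes as well.
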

\begin{proof}
Let $\mathscr{B}_1 = \{ (x_1, d_1), \ldots, (x_{l_1}, d_{l_1}) \}$ and  $\mathscr{B}_2 = \{(x_1', d_1'), \ldots, (x_{l_2}', d_{l_2}')\}$ be such that ${\mathscr{B}_1 \neq \mathscr{B}_2}$ and $d_1 \geq d_2 \geq \ldots \geq d_{l_1} \geq 0$. 

Without loss of generality assume that $\sigma_{(0,1)^n} (\mathscr{B}_1) \geq \sigma_{(0,1)^n} (\mathscr{B}_2)$. If $n > l_1$ or $n > l_2$, we add 0 length intervals to $\mathscr{B}_1$, $\mathscr{B}_2$ to achieve that their length is $n$.

Let $\theta$ be a bijection where the penalty is minimal, i.e.\ where ${P_p(\theta) = \textrm{d}_p(\mathscr{B}_1, \mathscr{B}_2)}$. Assume that $\theta$ matches $(x_1, d_1)$ with $(x_1', d_1')$, $(x_2, d_2)$ with $(x_2', d_2')$, \ldots, $(x_n, d_n)$ with $(x_n', d_n')$ (some of these intervals might be 0 length intervals).  

Note that for those $i$, for which either $d_i$ or $d_{i}'$ equals 0, we automatically have
\[
|\frac{d_i-d_i'}{2}| \leq \textrm{d}_\infty(\mathscr{B}_1, \mathscr{B}_2).
\]
For all other $i$ in this matching,
\[
|\frac{d_i-d_i'}{2}|^p \leq (\max |x_i-x_i'|, |d_i-d_i'+x_i-x_i'|)^p
\]
since $x \mapsto x^p$ is increasing for $x >0$ (note that $p \geq 1$).
Then 
\[
\begin{array}{lcl}
 (\sigma_{(0,1)^n} (\mathscr{B}_1)-\sigma_{(0,1)^n} (\mathscr{B}_2))^p & \leq & (\sigma_{(0,1)^n} (\mathscr{B}_1)-\sum_{i=1}^n d_i')^p \\
 &=& (\sum_{i=1}^n d_i - \sum_{i=1}^n d_i')^p\\
  &\leq & 2^p (\sum_{i=1}^n |\frac{d_i-d_i'}{2}|)^p \\
& \leq & 2^p (n)^{p-1} (\sum_{i=1}^n |\frac{d_i-d_i'}{2}|^p) \\
&\leq & 2^p (n)^{p-1} P_p(\theta)^p\\
&=& 2^p n^{p-1}\textrm{d}_p(\mathscr{B}_1, \mathscr{B}_2)^p.\\
\end{array}
\] 
The first inequality holds since $\sum_{i=1}^n d_i' \leq \sigma_{(0,1)^n} (\mathscr{B}_2)$. Also note that we chose $d_1, \ldots, d_n$ in a way that $\sigma_{(0,1)^n} (\mathscr{B}_1)= \sum_{i=1}^n d_i$.
To bound $\sum_{i=1}^n |\frac{d_i-d_i'}{2}|^p$ we use H{\H o}lder's inequality.

It follows from here that
\[
|\sigma_{(0,1)^n} (\mathscr{B}_1)-\sigma_{(0,1)^n} (\mathscr{B}_2)| \leq 2 n^{\frac{p-1}{p}}\textrm{d}_p(\mathscr{B}_1, \mathscr{B}_2).
\]
The statement of the theorem now follows from the same argument  as in the proof of Theorem~\ref{maxplusgen}.
\end{proof}

\section{Tropical Rational Functions on the Barcode Space}\label{sec:troprat} 
While the the functions belonging to $\mathscr{D}$ are stable and can be used to assign vectors to barcodes, they do not separate points in the barcode space, because they are composed by taking sums and maxima of lengths of intervals and constants. One example is $\{ (1, 2), (2, 2)\}$ and $\{ (2, 2), (3, 2) \}$. We can easily convince ourselves of this by evaluating $\sigma_{(0,1)^n}$ on these barcodes. 

Because there simply are not enough functions among max-plus polynomials to separate points, we expand the set of functions we observe to all tropical rational functions. Let 
\[
((x_1, d_1), \ldots, (x_n, d_n)), ((x_1', d_1'), \ldots, (x_n', d_n'))\in  [0, \infty)^{2n}.
\]
Without loss of generality we assume that they are lexicographically ordered.

The tropical rational functions that respect the equivalence classes of $B_n$, must respect the following equivalence relation $\sim$ on $[0, \infty)^{2n}$:
\[
((x_1, d_1), \ldots, (x_n, d_n)) \sim ((x_1', d_1'), \ldots, (x_n', d_n')) \Leftrightarrow (\forall i:\, (d_i = d_i' \land (x_i = x_i' \lor d_i =0))).
\]
We denote the semiring of such functions by $R_n$.
\begin{theorem}\label{finsub}
No finite subset of $R_n$ exists which separates nonequivalent points in $B_n$.
\end{theorem}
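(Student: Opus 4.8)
The plan is to argue by a dimension/cardinality count combined with an analyticity-type argument. Suppose for contradiction that a finite set $r_1, \ldots, r_N \in R_n$ separates nonequivalent points of $B_n$. Consider the map $\Phi = (r_1, \ldots, r_N) \colon [0,\infty)^{2n} \to \RR^N$. By Lemma~\ref{decomposition}, each $r_k$ is piecewise affine: $\RR^{2n}$ decomposes into finitely many full-dimensional polyhedral cells on which $r_k$ is affine, with piecewise-linear boundaries. Taking the common refinement over all $k$, we get a single finite polyhedral subdivision $\Sigma$ of $[0,\infty)^{2n}$ such that $\Phi$ is affine on each cell of $\Sigma$. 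Now pick any top-dimensional cell $C$ of $\Sigma$ whose interior meets the ``generic'' locus $U \subseteq [0,\infty)^{2n}$ where all coordinates are distinct and all $d_i > 0$ (such a cell exists since $U$ is open and dense). On such a cell, no nontrivial identification of $\sim$ is in force, so $\Phi|_{\mathrm{int}(C)}$ must be injective. But $\Phi|_C$ is the restriction of an affine map $A \colon \RR^{2n} \to \RR^N$ to a full-dimensional subset; an affine map is injective on a set with nonempty interior if and only if its linear part is injective, which forces $\mathrm{rank}(A) = 2n$, in particular $N \geq 2n$. This alone does not yet give a contradiction, so the count must be refined.

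The refinement I would pursue is to exploit the \emph{global} structure of the finitely many affine pieces against the \emph{infinitely many} directions in which barcodes in $B_n$ can fail to be identified. Concretely, I would look at the family of barcodes obtained by translating a fixed generic barcode: for $t \geq 0$ set $\mathscr{B}(t) = \{(x_1 + t, d_1), \ldots, (x_n + t, d_n)\}$ with all $d_i > 0$ and all $x_i$ distinct. For $t \neq t'$ the barcodes $\mathscr{B}(t)$ and $\mathscr{B}(t')$ are nonequivalent (the birth times differ and no length is zero), so $\Phi(\mathscr{B}(t)) \neq \Phi(\mathscr{B}(t'))$ for all $t \neq t'$; thus $t \mapsto \Phi(\mathscr{B}(t))$ is an injective curve in $\RR^N$. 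On the other hand, as $t$ ranges over $[0,\infty)$, the point $\mathscr{B}(t)$ crosses only finitely many of the walls of $\Sigma$ (each wall is a hyperplane-like piecewise-linear set, and a generic line meets the finite arrangement in finitely many points), so $[0,\infty)$ breaks into finitely many intervals on each of which $t \mapsto \Phi(\mathscr{B}(t))$ is affine in $t$, hence of the form $t \mapsto v_j t + w_j$ for vectors $v_j, w_j \in \RR^N$. Injectivity on each such interval forces $v_j \neq 0$; the contradiction I want to extract is that finitely many such affine segments cannot realize the separation required on the \emph{two-parameter} deformations. So I would instead use a two-parameter (or $n$-parameter) family: deform $\mathscr{B}$ by an independent shift $t_i$ of each $x_i$, giving a map $T \colon [0,\varepsilon)^n \to B_n$ into the generic locus which is injective up to the obvious $S_n$-action, hence $\Phi \circ T$ is injective on a fundamental domain. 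Since $\Phi$ is piecewise affine with finitely many pieces, $\Phi \circ T$ is piecewise affine on $[0,\varepsilon)^n$ with finitely many pieces; an injective piecewise-affine map from an open subset of $\RR^n$ needs each piece to have an injective linear part, which again gives $N \geq n$, but more importantly one can let $\varepsilon$ shrink and push the generic point around the whole (infinite) configuration space, and the finitely many ``affine germs'' of $\Phi$ cannot simultaneously separate all of these germs of deformations at once.

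Making that last sentence precise is the main obstacle, and the cleanest route is probably the following. Because $\Phi$ has finitely many affine pieces, there are only finitely many \emph{affine germs} of $\Phi$ — i.e., finitely many pairs (linear map, translation) $(L_s, b_s)$, $s = 1, \ldots, M$, such that near every point $p$ in the generic locus, $\Phi$ agrees with $x \mapsto L_s x + b_s$ for some $s = s(p)$. Now choose two generic points $p, p'$ that lie in the interior of the same cell's affine germ, i.e.\ with $s(p) = s(p')$, but with $p$ and $p'$ nonequivalent barcodes; one can do this because the generic locus is connected of dimension $2n$ while the cells are finite in number, so some germ's domain contains two nonequivalent barcodes $p, p'$ \emph{that lie on a common line segment inside that domain} along which $\Phi$ is a single affine map, and one arranges $\Phi(p) = \Phi(p')$ by choosing $p, p'$ in a fiber of the linear part $L_s$ (whose kernel is nonzero as soon as $N < 2n$, or whose kernel meets the generic directions when $N = 2n$ because the $\sim$-identifications kill the ``$x_i$ when $d_i = 0$'' directions, so the effective rank is strictly below $2n$ on the relevant stratum). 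This produces two nonequivalent points of $B_n$ with the same image, contradicting separation. I expect the delicate bookkeeping to be (i) verifying that a single affine germ has a domain large enough to contain a segment joining two nonequivalent generic barcodes — this uses that cells of a polyhedral subdivision are convex and that the generic locus is dense, so a full-dimensional cell meeting it contains a whole ball of generic barcodes — and (ii) ruling out the borderline case $N = 2n$, for which I would use the equivalence relation $\sim$: along the wall $d_n = 0$ the function $r_k$ is forced to be independent of $x_n$ (this is exactly the $R_n$ condition), which drops the rank of the relevant linear germ, so $N = 2n$ affine pieces still cannot be injective near that wall while remaining $\sim$-respecting.
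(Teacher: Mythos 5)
Your high-level strategy --- finitely many affine pieces, plus a degeneracy forced by the relation $\sim$, must produce two nonequivalent barcodes with the same image --- is the same idea that drives the paper's proof, but your write-up has a genuine gap at exactly the step where the work happens. The dimension counts in your first two paragraphs yield only $N\geq 2n$ (resp.\ $N\geq n$) and, as you concede, no contradiction: nothing prevents a finite family from being piecewise affine with injective linear part of rank $2n$ on every full-dimensional cell of the generic locus $\{d_i>0\}$, and on that locus distinct points really are nonequivalent, so injectivity there is perfectly consistent with separation. Your third argument therefore needs the kernel of some affine germ to be nontrivial on a full-dimensional cell containing nonequivalent barcodes, and the only reason you offer is that ``the $\sim$-identifications kill the $x_i$ directions when $d_i=0$, so the effective rank drops on the relevant stratum.'' That rank drop lives on the measure-zero stratum $\{d=0\}$, where all points are equivalent to the empty barcode, so by itself it produces no contradiction; what you must show is that the insensitivity to $x$-shifts \emph{propagates off the stratum} into a region where $d_i=\epsilon>0$, where an $x$-shift genuinely changes the equivalence class. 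That propagation is the heart of the matter and is not automatic.

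The paper carries out exactly this step. It forms $g(\vec{x},\vec{x}\,')=\max_k|f_k(\vec{x})-f_k(\vec{x}\,')|$ on the doubled space, uses the curve $\vec{p}_x=(x,0,\ldots,x,0)$ of mutually equivalent barcodes to extract identities among the coefficients of the affine pieces of $g$ (the sums of the $x$-coefficients of the two max-of-affine terms, and the constants, must agree on any pair of pieces whose domains of strict maximality intersect), and then --- this is the delicate part --- exhibits points $A=(\vec{0},\vec{0})$, $B=A+\epsilon\vec{v}$ (all lengths bumped to $\epsilon$ in both arguments) and $C=B+a\vec{w}$ (the $x$-coordinates of the second barcode shifted by $a$) all lying in the closure of a \emph{single} cell $\overline{S_i\cap T_j}$, via a case analysis on whether $A$ sits in the interior of a cell or on a wall that continues in the direction $\vec{v}$. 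The coefficient identities then force $g(C)=g(B)=0$ even though $C$ encodes two nonequivalent barcodes with all $d_i=\epsilon>0$. Your proposal names the ingredients (finiteness of the germs, convexity of cells, degeneracy at $d=0$) but never establishes that a single affine piece simultaneously (a) is subject to the degeneracy inherited from the zero-length locus and (b) contains an open segment of pairwise nonequivalent barcodes; without that, the case $N\geq 2n$ is not ruled out and the proof does not close.
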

\begin{proof}
Assume $\{f_1, \ldots, f_m\}\in R_n$ separates nonequivalent points in $B_n$. Let $\vec{x} = (x_1, d_1, \ldots, x_n, d_n)$ and $\vec{x}' = (x_1', d_1', \ldots, x_n', d_n')$. We define
\[
g(\vec{x}, \vec{x}') = \max \{ | f_1(\vec{x}) - f_1(\vec{x}')|, \ldots, | f_m(\vec{x}) - f_m(\vec{x}')| \}.
\] 
The function $g$ is the $L^{\infty}$-distance between vectors $(f_1(\vec{x}), \ldots, f_m(\vec{x}))$ and $(f_1(\vec{x}'), \ldots, f_m(\vec{x}'))$. Thus $g(\vec{x}, \vec{x}') = 0$ if and only if $\vec{x}$ and $\vec{x}'$ are equivalent points.

Since $|x| = \max(x, -x)$, $g$ is a tropical rational function and as demonstrated in Subsection~\ref{ratfun} we can write $g(\vec{x},\vec{x}')$ as
\begin{multline}
\max_{i=1, \ldots, l_1} (\sum_{k=1}^n (a_{k,i} x_k + b_{k,i} d_k) + \sum_{k=1}^n (a_{k,i}' x_k' + b_{k,i}' d_k') + c_{i}) \,-\\
\max_{j=1, \ldots, l_2} (\sum_{k=1}^n (s_{k,j} x_k + t_{k,j} d_k) + \sum_{k=1}^n (s_{k,j}' x_k' + t_{k,j}' d_k') + u_{j}).
\end{multline}
For any $x\geq 0$, define $\vec{p}_x = (x, 0, \ldots, x, 0)$. Since $\vec{p}_x $ and $\vec{p}_y$ are equivalent  for any $x, y \geq 0$ with respect to the relation defined before the statement of Theorem~\ref{finsub}, $g(\vec{p}_x, \vec{p}_y) = 0$ and consequently
\[
\max_{i=1, \ldots, l_1} (x \sum_{k=1}^n a_{k,i} + y \sum_{k=1}^n a_{k,i}'  + c_{i}) =
\max_{j=1, \ldots, l_2} (x \sum_{k=1}^n s_{k,j} + y \sum_{k=1}^n s_{k,j}' + u_{j}).
\]

Both $\displaystyle{\max_{i=1, \ldots, l_1} (x \sum_{k=1}^n a_{k,i} + y \sum_{k=1}^n a_{k,i}'  + c_{i})}$ and $\displaystyle{\max_{j=1, \ldots, l_2} (x \sum_{k=1}^n s_{k,j} + y \sum_{k=1}^n s_{k,j}' + u_{j})}$ are piecewise linear functions defined on $\RR^2$. Each function defines a decomposition of $\RR^2$ into maximal closed domains over which this function is linear on every domain (Lemma~\ref{decomposition}).  Let \[
D_i = \{(x, y) \in [0, \infty)^2\,|\, x \sum_{k=1}^n a_{k,i} + y \sum_{k=1}^n a_{k,i}'  + c_{i} > x \sum_{k=1}^n a_{k, j} + y \sum_{k=1}^n a_{k, j}'  + c_{j} \textrm{ for all } j\neq i \},
\] 
\[
E_i = \{(x, y) \in [0, \infty)^2\,|\, x \sum_{k=1}^n s_{k,i} + y \sum_{k=1}^n s_{k,i}' + u_{i} > x \sum_{k=1}^n s_{k,j} + y \sum_{k=1}^n s_{k,j}' + u_{j} \textrm{ for all } j\neq i \}.
\]
These sets are open.
 Two linear functions on an non-empty open set are the same if and only if their coefficients are the same. This implies that $\sum_{k=1}^n a_{k,i} = \sum_{k=1}^n s_{k,j}$, $\sum_{k=1}^n a_{k,i}' = \sum_{k=1}^n s_{k,j}' $  and $c_i = u_j$ for all $i$ and $j$ for which ${D_i \cap E_j \neq \emptyset}$.
 
We also define
\[
S_i = \{(\vec{x}, \vec{x'}) \in [0, \infty)^{2n}\times [0, \infty)^{2n}\,|\, \sum_{k=1}^n (a_{k,i} x_k + b_{k,i} d_k) + \sum_{k=1}^n (a_{k,i}' x_k' + b_{k,i}' d_k') + c_{i}
\]
\[> \sum_{k=1}^n (a_{k, j} x_k + b_{k, j} d_k) + \sum_{k=1}^n (a_{k,j}' x_k' + b_{k, j}' d_k') + c_{j}\textrm{ for all } j\neq i \}
\] 
and
\[
T_i =  \{(\vec{x}, \vec{x'}) \in [0, \infty)^{2n} \times [0, \infty)^{2n}\,|\, \sum_{k=1}^n (s_{k,i} x_k + t_{k,i} d_k) + \sum_{k=1}^n (s_{k,i}' x_k' + t_{k,i}' d_k') + u_{i}
\]
\[> \sum_{k=1}^n (s_{k,j} x_k + t_{k,j} d_k) + \sum_{k=1}^n (s_{k,j}' x_k' + t_{k,j}' d_k') + u_{j}\textrm{ for all } j\neq i \}.
\] 
$\bigcup S_i$ and $\bigcup T_i$ are open and dense in $[0, \infty)^{2n} \times [0, \infty)^{2n}$. In particular 
\[
[0, \infty)^{2n}\times [0, \infty)^{2n} = \bigcup_{i, j} \overline{S_i \cap T_j}.
\]
The closures $\overline{S_i \cap T_j}$ have piecewise linear boundaries by Lemma~\ref{decomposition} since $g$ is a tropical rational function. We claim that there exist indices $i$ and $j$ and $\epsilon, a >0$ such that the  points 
 \[
 \begin{array}{lcl}
A &:=& ((0, 0, \ldots, 0, 0),(0, 0, \ldots, 0, 0)),\\
 B &:=&((0, \epsilon, 0, \epsilon, \ldots, 0, \epsilon),(0, \epsilon, 0, \epsilon, \ldots, 0, \epsilon)),\\
 C &:=& ((0, \epsilon, 0, \epsilon, \ldots, 0, \epsilon),(a, \epsilon, a, \epsilon, \ldots, a, \epsilon))
\end{array} 
\] 
belong to $\overline{S_i \cap T_j}$. We will show that $g(C) = 0$ although $(0, \epsilon, 0, \epsilon, \ldots, 0, \epsilon)$ and $(a, \epsilon, a, \epsilon, \ldots, a, \epsilon)$ are not equivalent points. This will lead to a contradiction.
 
 The point $A$, together with vectors 
 \[
 ((0, 1, 0, 1, \ldots, 0, 1),(0, 1, 0, 1, \ldots, 0, 1)) \textrm{ and }((0, 0, \ldots, 0, 0),(1, 0, \ldots, 1, 0)),
 \] 
 determines a plane; denote its intersection with $[0, \infty)^{2n} \times [0, \infty)^{2n}$ by $P$. The decomposition of $[0, \infty)^{2n} \times [0, \infty)^{2n}$ on $\overline{S_i \cap T_j}$ determines a decomposition of $P$ on sets $\overline{S_i \cap T_j} \cap P$ -- their boundaries (considered within $P$) are then also piecewise linear.
 
We set
 \[
 \vec{v} := ((0, 1, 0, 1, \ldots, 0, 1),(0, 1, 0, 1, \ldots, 0, 1))
 \]
 and
 \[
 \vec w=((0, 0, \ldots, 0, 0),(1, 0, \ldots, 1, 0)).
 \]
 If $A$ is in the interior (relative to $P$) of some $\overline{S_i \cap T_j} \cap P$, then we can clearly find the required $\epsilon$ and $a$. If $A$ is in a boundary, consider what lies in the direction $\vec{v}$. If it is the interior of some $\overline{S_i \cap T_j} \cap P$, then $A$ is in $\overline{S_i \cap T_j} \cap P$, and we can again find suitable $\epsilon$ and $a$. Suppose now that $A$ lies in a boundary $\ell$ which continues in the direction of $\vec{v}$. Let $\epsilon$ be small enough that $\ell$ does not yet end at $\epsilon \vec{v}$, and such that the line segment open at $A$ joining $A$ and $A+\epsilon\vec{v}$ does not intersect any boundaries other than $\ell$; we then let $B=A+\epsilon\vec{v}$. Since this boundary did not yet end at $B$, the vector $\vec w$ with the origin point in $B$ must point into the interior of some $\overline{S_i \cap T_j} \cap P$; choose a small enough $a$ so that $C=B+a\vec{w}$, and indeed the line segment joining $B$ and $C$ lies within $\overline{S_i\cap T_j}\cap P$. 
 
Now we calculate
\[
\begin{array}{lcl}
0 &=& g((0, \epsilon, \ldots, 0, \epsilon), (0, \epsilon, \ldots, 0, \epsilon)) \\
&=&  \epsilon \sum_{k=1}^n (b_{k,i} + b_{k,i}') + c_{i} - (\epsilon \sum_{k=1}^n (t_{k,j} + t_{k,j}') + u_{j})\\
&=&   a \sum_{k=1}^n a_{k,i}' + \epsilon \sum_{k=1}^n (b_{k,i} + b_{k,i}') + c_{i} -\\
& & \,\,\,( a \sum_{k=1}^n s_{k,j}' + \epsilon \sum_{k=1}^n (t_{k,j} + t_{k,j}') + u_{j})\\
 &=& g((0 , \epsilon, \ldots, 0, \epsilon), (a, \epsilon, \ldots, a, \epsilon)) \\
  &\neq& 0,\\
\end{array}
\]
which is a contradiction.
\end{proof}

Theorem~\ref{finsub} states that no finite subset of symmetric min-plus, max-plus or tropical rational functions exists that separates barcodes. In this section we identify a countable set of tropical rational functions on the barcode space that does.

\begin{theorem}\label{rationalcoord}
Let $\{ \sigma_{(e_{1, 1}, e_{1, 2}), \ldots, (e_{n, 1}, e_{n, 2}) }\}$ be the set of elementary 2-symmetric max-plus polynomials from Definition~\ref{2symel}.
Functions, defined by
\[
E_{m, (e_{1, 1}, e_{1, 2}), \ldots, (e_{n, 1}, e_{n, 2}) }(x_1, d_1, \ldots, x_n, d_n) :=\sigma_{(e_{1, 1}, e_{1, 2}), \ldots, (e_{n, 1}, e_{n, 2}) } (x_1 \oplus d_1^m, d_1, \ldots, x_n \oplus d_n^m, d_n),
\] 
for $m\in \NN$ are contained in $R_n$. Furthermore, they separate nonequivalent points in $B_n$.
\end{theorem}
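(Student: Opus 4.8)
The plan is to prove the two assertions separately, as they require quite different ideas. For membership in $R_n$, note that each $E_{m,(e_{1,1},e_{1,2}),\dots,(e_{n,1},e_{n,2})}$ is manifestly a tropical rational function, being the composition of the max-plus polynomial $\sigma_{(e_{1,1},e_{1,2}),\dots,(e_{n,1},e_{n,2})}$ with the coordinatewise tropical map $(x_i,d_i)\mapsto(x_i\oplus d_i^m,d_i)=(\min(x_i,md_i),d_i)$. So the only thing to check is that it respects the equivalence relation $\sim$ defining $B_n$. This is where the standing hypothesis $x_i\ge 0$ enters: whenever $d_i=0$ we have $x_i\oplus d_i^m=\min(x_i,0)=0$, independently of $x_i$. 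Consequently, if $((x_1,d_1),\dots)\sim((x_1',d_1'),\dots)$, the substituted vectors $(\min(x_i,md_i),d_i)_i$ and $(\min(x_i',md_i'),d_i')_i$ agree coordinatewise — on indices with $d_i>0$ because there $x_i=x_i'$, and on indices with $d_i=0$ because both collapse to $(0,0)$ — so $E_{m,\dots}$ takes the same value on them. Since the coordinatewise substitution commutes with permuting the blocks, $E_{m,\dots}$ is also $2$-symmetric, hence lies in $R_n$.

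For the separation statement I would argue by contraposition. Fix representatives $\mathscr B_1=\{(x_i,d_i)\}_{i=1}^n$ and $\mathscr B_2=\{(x_i',d_i')\}_{i=1}^n$ that are \emph{not} equivalent in $B_n$, and define the reduced barcodes $\tilde{\mathscr B}_1,\tilde{\mathscr B}_2$ obtained by replacing every interval of length $0$ by $(0,0)$ and leaving the positive-length intervals untouched; this is well defined on multisets. The first claim is that $\tilde{\mathscr B}_1\neq\tilde{\mathscr B}_2$: two points of $B_n$ are equivalent exactly when their multisets of positive-length intervals coincide, and $\tilde{\mathscr B}_j$ is precisely those positive-length intervals together with $n-\#\{i:d_i>0\}$ copies of $(0,0)$; so $\tilde{\mathscr B}_1=\tilde{\mathscr B}_2$ would force equality of the positive parts and hence $\mathscr B_1\sim\mathscr B_2$, contradicting the assumption.

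Next I would invoke Proposition~\ref{sepbar}. Since $\tilde{\mathscr B}_1$ and $\tilde{\mathscr B}_2$ are distinct orbits in $[0,\infty)^{2n}\subseteq\RR^{2n}$, there is an elementary $2$-symmetric max-plus polynomial $\sigma=\sigma_{(e_{1,1},e_{1,2}),\dots,(e_{n,1},e_{n,2})}$ with $\sigma(\tilde{\mathscr B}_1)\neq\sigma(\tilde{\mathscr B}_2)$. It remains to link $\sigma$ on the reduced barcodes with $E_{m,\dots}$ on the originals: for a fixed barcode $\mathscr B$, as soon as $m>\max\{x_i/d_i:d_i>0\}$ one has $\min(x_i,md_i)=x_i$ for every $i$ with $d_i>0$ and $\min(x_i,md_i)=0$ for every $i$ with $d_i=0$, so the substituted tuple is a representative of $\tilde{\mathscr B}$ and therefore $E_{m,(e_{1,1},\dots)}(\mathscr B)=\sigma(\tilde{\mathscr B})$. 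Taking $m$ above this threshold for both $\mathscr B_1$ and $\mathscr B_2$ yields $E_{m,(e_{1,1},\dots)}(\mathscr B_1)=\sigma(\tilde{\mathscr B}_1)\neq\sigma(\tilde{\mathscr B}_2)=E_{m,(e_{1,1},\dots)}(\mathscr B_2)$, so this single function of the family separates the two barcodes.

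I do not anticipate a deep obstacle; the argument is a saturation trick feeding into Proposition~\ref{sepbar}. The places that need genuine care are the bookkeeping in the second paragraph — confirming that equality of reduced barcodes is \emph{equivalent} to $\sim$-equivalence in $B_n$, not merely implied by it — and the uniformity of the choice of $m$ over the finitely many intervals of both barcodes, together with the already-flagged reliance on $x_i\ge 0$, without which the $d_i=0$ coordinates would not collapse to $0$.
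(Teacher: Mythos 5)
Your proposal is correct and follows essentially the same route as the paper: both arguments observe that $x_i\oplus d_i^m=\min(x_i,md_i)$ collapses to $0$ when $d_i=0$ (using $x_i\ge 0$) to get membership in $R_n$, and both prove separation by choosing $m$ larger than every ratio $x_i/d_i$ over the positive-length intervals of the two barcodes so that the substituted tuples become canonical representatives (your $\tilde{\mathscr B}_j$, the paper's $(0,0,\dots,0,0,x_k,d_k,\dots,x_n,d_n)$), and then invoking Proposition~\ref{sepbar}. The only addition you make is to spell out explicitly that equality of the reduced barcodes is equivalent to $\sim$-equivalence, a point the paper leaves implicit.
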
 

\begin{proof}
Restricted to $d_i=0$ for $i \in \NN_{\leq n}$, expressions $x_i \oplus d_i^m$ are 0 and therefore independent of $x_i$ and consequently so are their post-compositions with $e_{(e_{1, 1}, e_{1, 2}), \ldots, (e_{n, 1}, e_{n, 2}) }$. This implies that $E_{m, (e_{1, 1}, e_{1, 2}), \ldots, (e_{n, 1}, e_{n, 2}) }(x_1, d_1, \ldots, x_n, d_n)$ is contained in $R_n$. 

We must show that if  $(x_1, d_1, \ldots, x_n, d_n)$ and $(x_1', d_1', \ldots, x_n', d_n')$ are not equivalent in $B_n$, we can find such $E_{m, (e_{1, 1}, e_{1, 2}), \ldots, (e_{n, 1}, e_{n, 2}) }$ that
\[
E_{m, (e_{1, 1}, e_{1, 2}), \ldots, (e_{n, 1}, e_{n, 2}) }(x_1, d_1, \ldots, x_n, d_n) \neq E_{m, (e_{1, 1}, e_{1, 2}), \ldots, (e_{n, 1}, e_{n, 2}) }(x_1', d_1', \ldots, x_n', d_n').
\]
Let $(x_1, d_1, \ldots, x_n, d_n)$ and $(x_1', d_1', \ldots, x_n', d_n')$ be nonequivalent. Without loss of generality assume that $d_1 \leq \ldots \leq d_n$ and $d_1' \leq \ldots \leq d_n'$. 

Some of the $d$'s, say $d_1, \ldots, d_{k-1} = 0$ can be 0 (if $k=1$ none of $d$'s is 0). The point $(x_1, 0, \ldots, x_{k-1}, 0, x_{k}, d_k, \ldots x_n, d_n)$ is equivalent to $(0, 0, \ldots, 0, 0, x_{k}, d_k, \ldots x_n, d_n)$ and consequently
\begin{multline}
E_{m, (e_{1, 1}, e_{1, 2}), \ldots, (e_{n, 1}, e_{n, 2}) }(x_1, 0, \ldots, x_{k-1}, 0, x_{k}, d_k, \ldots x_n, d_n) =\\
 E_{m, (e_{1, 1}, e_{1, 2}), \ldots, (e_{n, 1}, e_{n, 2}) }(0, 0, \ldots, 0, 0, x_{k}, d_k, \ldots x_n, d_n)
\end{multline}
for all $m$ and $(e_{1, 1}, e_{1, 2}), \ldots, (e_{n, 1}, e_{n, 2})$. Similarly, if $d_1', \ldots, d_{l-1}' = 0$, then 
\[
(x_1', 0, \ldots, x_{l-1}', 0, x_{l}', d_l', \ldots x_n', d_n') \sim (0, 0, \ldots, 0, 0, x_{l}', d_l', \ldots x_n', d_n')
\]
and consequently
\begin{multline}
E_{m, (e_{1, 1}, e_{1, 2}), \ldots, (e_{n, 1}, e_{n, 2}) }(x_1', 0, \ldots, x_{l-1}', 0, x_{l}', d_l', \ldots x_n', d_n')=\\ E_{m, (e_{1, 1}, e_{1, 2}), \ldots, (e_{n, 1}, e_{n, 2}) }(0, 0, \ldots, 0, 0, x_{l}', d_l', \ldots x_n', d_n')
\end{multline}
for all $m$ and $(e_{1, 1}, e_{1, 2}), \ldots, (e_{n, 1}, e_{n, 2})$. 

Choose $m \in \NN$ such that 
\[
m  > \max (\max_{k \leq i \leq n} \frac{x_i}{d_i},  \max_{l \leq i \leq n} \frac{x_i'}{d_i'}).
\]
For this $m$,
\[
(x_1 \oplus d_1^m, d_1, \ldots, x_n \oplus d_n^m, d_n) = (0, 0, \ldots, 0, 0, x_{k}, d_k, \ldots x_n, d_n) 
\]
and
\[
(x_1' \oplus d_1^{'m}, d_1', \ldots, x_n' \oplus d_n^{'m}, d_n) = (0, 0, \ldots, 0, 0, x_{l}', d_l', \ldots x_n', d_n')
\]
Proposition~\ref{sepbar} guarantees existence of $e \in \{ \sigma_{(e_{1, 1}, e_{1, 2}), \ldots, (e_{n, 1}, e_{n, 2}) }\}$ such that 
\[
e(0, 0, \ldots, 0, 0, x_{k}, d_k, \ldots x_n, d_n) \neq e(0, 0, \ldots, 0, 0, x_{l}', d_l', \ldots x_n', d_n').
\]
Therefore we see that for this choice of $m$ and this $e$,
\[
E_{m, e}(x_1, d_1, \ldots x_n, d_n) \neq E_{m, e}(x_1', d_1', \ldots, x_n', d_n')
\]
and we are done.
\end{proof}

It is hard to characterize \emph{all} tropical rational functions on $B_n$, so we work with a subsemiring of functions obtained by taking maxima, adding and substracting functions from $\{E_{m, (e_{1, 1}, e_{1, 2}), \ldots, (e_{n, 1}, e_{n, 2})}\}$. We denote this subsemiring by $G_n$ or $G_n^{S_n}$ when we wish to stress that all the functions contained in it are symmetric.
We have restriction maps $i_{n,m} \colon G_n \to G_m$, when $n \geq m$, induced by 
\[
i_{n,m}(f)(x_1, d_1, \ldots, x_m, d_m, \ldots, x_n, d_n) = f(x_1, d_1, \ldots, x_m, d_m, 0,0, \ldots, 0, 0),
\]
The map $i_{n, m}$ is $S_m$-equivariant, where $S_m$ acts by permuting the first $m$ pairs of variables. 

Maps $i_{n, n-1}$ transform the generators of $G_n$ as follows:
\[
E_{m, (0, 0)^j (1,0)^k (0, 1)^l (1, 1)^{p}} \mapsto \left\{ 
\begin{array}{ll}
E_{m, (0, 0)^{j-1} (1,0)^k (0, 1)^l (1, 1)^{p}} &\textrm{if }j \neq 0 \\
E_{m, (1, 1)^{n-1}} &\textrm{if }j= 0, k=0, l=0 \\
E_{m, (0, 1)^{l-1} (1, 1)^{p}} &\textrm{if }j= 0, k=0, l\geq 1 \\
E_{m, (1,0)^{k-1} (1, 1)^{p}} &\textrm{if }j= 0, k\geq , l=0  \\
E_{m, (1,0)^{k-1} (0, 1)^l (1, 1)^{p}} \boxplus E_{m, (1,0)^{k} (0, 1)^{l-1} (1, 1)^{p}}  & \textrm{if }j= 0, k\geq , l\geq1  \\
\end{array}\right .
\]
Here $p = n-l-k-j$. Therefore we $i_{n, n-1}$ is a surjection from $G_n$ to $G_{n-1}$ and we may construct composites
\[
i_n^{n-1} \colon G_n^{S_n} \hookrightarrow G_n^{S_{n-1}}\xrightarrow{i_{n, n-1}^{S_{n-1}}} G_{n-1}^{S_{n-1}}.
\]
We cannot proceed as we did in the case of max-plus polynomials, since we cannot define a degree of a tropical rational expression. However, recall that according to Section~\ref{back} we can write any $r\in G_n$ as
\begin{multline}
\max_{i=1, \ldots, l_1} (\sum_{k=1}^n (a_{k,i} x_k + b_{k,i} d_k) + \sum_{k=1}^n (a_{k,i}' x_k' + b_{k,i}' d_k') + c_{i}) \,-\\
\max_{j=1, \ldots, l_2} (\sum_{k=1}^n (s_{k,j} x_k + t_{k,j} d_k) + \sum_{k=1}^n (s_{k,j}' x_k' + t_{k,j}' d_k') + u_{j}).
\end{multline}
Now set
\[
{}_kG_n^{S_n} = \{ r\in G_n \,|\, r \sim p \oplus q^{-1}, p, q\textrm{ are max-plus polynomials with } \textrm{deg}\, p, \textrm{deg}\,q \leq k \}
\]
 Map $i_{n}^{n-1}$ induces ${}_ki_{n}^{n-1}\colon {}_kG_n^{S_n}  \xrightarrow{{}_ki_{n}^{n-1}}  {}_kG_{n-1}^{S_{n-1}}$. We denote the inverse limit of this system by $\mathscr{G}^k$. Let $\mathscr{G} = \cup_{k=1}^\infty \mathscr{G}^k$. 

\begin{theorem}\label{tropicalgen}
Tropical rational functions in $\mathscr{G}$ form a filtered semiring and they separate points in the barcode space. As a semiring $\mathscr{G}$ is generated by elements of the form $E_{m, (1,0)^k (0, 1)^l (1, 1)^{p}}$ where $k, l, p$ are nonnegative integers and $m$ is a positive integer.
\end{theorem}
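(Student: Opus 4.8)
The plan is to treat the three assertions separately and in order: that $\{\mathscr G^k\}$ makes $\mathscr G$ a filtered semiring, that $\mathscr G$ separates points of $B$, and that the listed $E_{m,(1,0)^k(0,1)^l(1,1)^{p}}$ generate $\mathscr G$. For the filtered structure I would verify the defining conditions for the family $\{\mathscr G^k\}_{k\in\NN}$: the nesting $\mathscr G^k\subseteq\mathscr G^{k+1}$ and the exhaustion $\mathscr G=\bigcup_k\mathscr G^k$ are immediate from the construction, since ${}_kG_n^{S_n}\subseteq{}_{k+1}G_n^{S_n}$ and the inverse limits and their union are formed levelwise. The multiplicativity $\mathscr G^k\odot\mathscr G^{k'}\subseteq\mathscr G^{k+k'}$ follows because the degree of an $\odot$-product of two max-plus polynomials is the sum of their degrees, a quantity well defined on equivalence classes: if $r\sim p\odot q^{-1}$ and $s\sim p'\odot q'^{-1}$ with $\deg p,\deg q\le k$ and $\deg p',\deg q'\le k'$, then $r\odot s\sim(p\odot p')\odot(q\odot q')^{-1}$ with numerator and denominator of degree $\le k+k'$. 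One also checks that the transition maps $i_{n,n-1}$ carry ${}_kG_n^{S_n}$ into ${}_kG_{n-1}^{S_{n-1}}$, which is visible from the explicit description of $i_{n,n-1}$ on the generators $E_{m,\sigma}$: deleting a row $(0,0)$, or splitting off a $\boxplus$ of two $E$'s, does not raise the degree of a numerator/denominator representation. Hence the ${}_k(-)$-filtration is compatible with the inverse limits and the union, giving $\mathscr G$ the structure of a filtered semiring exactly as in Theorem~\ref{maxplusgen}.

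For separation, let $\mathscr B_1\not\sim\mathscr B_2$ in $B$. Picking $N$ at least as large as the number of intervals in either barcode and padding with copies of $(0,0)$ realizes both as points of $B_N$, and non-equivalence in $B$ is exactly non-equivalence in $B_N$. By Theorem~\ref{rationalcoord} there are $m\in\NN$ and an elementary $2$-symmetric $\sigma$ so that $E_{m,\sigma}$ separates the two points of $B_N$; after sorting the rows of the exponent matrix and discarding trivial $(0,0)$-rows, $\sigma$ is one of the $\sigma_{(1,0)^k(0,1)^l(1,1)^{p}}$, and since $\sigma$ is $2$-symmetric and the substitution $x_i\mapsto x_i\oplus d_i^m$ respects blocks, $E_{m,\sigma}\in G_N^{S_N}$. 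I then promote $E_{m,\sigma}$ to an element $(r_n)_n\in\mathscr G$ by setting $r_n$, for $n\ge N$, to be $E_{m,\sigma}$ with enough extra $(0,0)$-rows to have $n$ rows, and for $n<N$ to be $i_{N,n}(E_{m,\sigma})$. Compatibility with all transition maps holds by construction — for $n\ge N$ from the first case of the transition formula for the $E$'s, for $n<N$ because the lower components were defined by restriction — and the $r_n$ have degrees bounded by some $k'$, since only the finitely many lower levels can raise the degree, by a bounded amount, through the $\boxplus$-splits. Thus $(r_n)_n\in\mathscr G^{k'}\subseteq\mathscr G$, and its $N$-th component $E_{m,\sigma}$ distinguishes $\mathscr B_1$ and $\mathscr B_2$.

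For the generating set, each $E_{m,(1,0)^k(0,1)^l(1,1)^{p}}$ is itself promoted to an element of $\mathscr G$ by the same stabilization, so it remains to write an arbitrary $r=(r_n)_n\in\mathscr G$ as a combination of these under $\boxplus$, $\odot$ and $\odot^{-1}$. Here I would follow the pattern of Theorem~\ref{maxplusgen}: with $r\in\mathscr G^k$, argue that the inverse system $\{{}_kG_n^{S_n}\}$ \emph{stabilizes}, i.e.\ the transition maps ${}_ki_n^{n-1}$ become isomorphisms for $n$ larger than some $N_0=N_0(k)$. Granting this, $r$ is determined by the single component $r_{N_0}\in{}_kG_{N_0}^{S_{N_0}}\subseteq G_{N_0}^{S_{N_0}}$, and $G_{N_0}^{S_{N_0}}$ is by construction obtained from the $E_{m,\sigma}$ by finitely many maxima, sums and differences; if $r_{N_0}=\Phi(E_{m_1,\sigma_1},\dots,E_{m_t,\sigma_t})$ is such an expression, then replacing each $E_{m_i,\sigma_i}$ by its stabilization $\mathbf E_{m_i,\sigma_i}\in\mathscr G$ gives $\hat r:=\Phi(\mathbf E_{m_1,\sigma_1},\dots,\mathbf E_{m_t,\sigma_t})\in\mathscr G$ whose $N_0$-th component is $r_{N_0}$. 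Stabilization then forces $\hat r_n=r_n$ for $n\ge N_0$ by injectivity of the transition maps, and $\hat r_n=i_{N_0,n}(\hat r_{N_0})=i_{N_0,n}(r_{N_0})=r_n$ for $n<N_0$, so $\hat r=r$, exhibiting $r$ in the semiring generated by the claimed elements.

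I expect the stabilization of $\{{}_kG_n^{S_n}\}$ to be the main obstacle. What it asks is that a $2$-symmetric tropical rational function on $B_n$ admitting a degree-$\le k$ representation be recoverable from its restriction to $B_{N_0}$ by re-symmetrizing — morally, such a function depends on only boundedly many of the intervals in a barcode, so beyond a certain number of intervals nothing new appears. Unlike the max-plus case, where the stabilizing generators $\sigma_{(0,1)^j}$ with $j\le\min(k,n)$ are explicit, here one must show that a degree bound on a $\mathtt{RTrop}$-representation constrains both the exponent $m$ and the combinatorial type of every $E_{m,\sigma}$ that can occur in a degree-$\le k$ element, and that the $B_n$-compatibility condition (independence of $x_i$ wherever $d_i=0$) supplies the extra rigidity needed to rule out accidental low-degree cancellations among high-$m$ generators; carrying this out carefully is the crux. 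The remaining ingredients — the explicit transition formulas for the $E_{m,\sigma}$, Theorem~\ref{rationalcoord}, and the degree arithmetic for max-plus polynomials — are routine.
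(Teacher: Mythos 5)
Your treatment of the first two assertions is correct and is essentially what the paper intends: the paper in fact prints no proof of Theorem~\ref{tropicalgen}, presenting it as a summary of the preceding construction, so the content is exactly (i) the degree filtration ${}_kG_n^{S_n}$ being compatible with $\odot$ and with the transition maps, and (ii) separation via Theorem~\ref{rationalcoord} after padding both barcodes into a common $B_N$ and stabilizing the separating $E_{m,\sigma}$ to a coherent sequence using the first case of the displayed formula for $i_{n,n-1}$ on generators. One small correction: degree is \emph{not} well defined on equivalence classes of tropical rational expressions (e.g.\ $x\odot x^{-1}\sim 0$), which is precisely why ${}_kG_n^{S_n}$ is defined by the \emph{existence} of some representation $p\odot q^{-1}$ with $\deg p,\deg q\le k$; your multiplicativity argument is phrased correctly for that definition, but the implicit suggestion that the degree of a product is a well-defined invariant of the function should be dropped.

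The generation claim is where the genuine gap lies, and you have located it accurately but not closed it. You reduce everything to the assertion that the inverse system $\{{}_kG_n^{S_n}\}$ stabilizes, i.e.\ that ${}_ki_n^{n-1}$ is an isomorphism for $n$ large, and then say you ``expect'' this and that carrying it out ``is the crux.'' That is not a proof. Surjectivity of $i_{n,n-1}$ (which the paper does establish via the explicit action on the $E_{m,\sigma}$) does not by itself imply that the inverse limit is generated by coherent sequences of generators: an element of $\lim_n {}_kG_n^{S_n}$ is a coherent family $(r_n)$, each $r_n$ admits possibly many expressions in the $E_{m,\sigma}$, and one must choose these expressions coherently across $n$ --- exactly the Mittag-Leffler-type issue your stabilization hypothesis is meant to suppress. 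Moreover the bound $k$ constrains only \emph{some} representation of $r_n$, not the generators out of which it was built, so low-degree elements could a priori arise from cancellations among generators with large $m$ or many nonzero rows; you acknowledge this but supply no mechanism to exclude it. As it stands the third assertion of the theorem is not established by your argument. (To be fair, the paper itself offers no argument for this point either; a complete proof would have to furnish your stabilization lemma, or replace it with a direct coherent-lifting argument along the lines of the max-plus case, where the generators $\sigma_{(0,1)^j}$ of degree $j\le k$ make stabilization explicit.)
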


\section{Stability of Tropical Rational Functions in $\mathscr{G}$}
In this subsection we prove that the rational functions that we identified are stable with respect to the bottleneck and Wasserstein distances.
\begin{theorem}[Bottleneck stability of functions in $\mathscr{G}$]\label{tropratstab}
If $F \in \mathscr{G}$, then a constant $C$ exists such that
\[
|F(\mathscr{B}_1) - F(\mathscr{B}_2)| \leq C \textrm{d}_\infty(\mathscr{B}_1, \mathscr{B}_2)
\]
for any pair of barcodes $\mathscr{B}_1$ and  $\mathscr{B}_2$.
\end{theorem}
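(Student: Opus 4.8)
The plan is to reduce Theorem~\ref{tropratstab} to a Lipschitz estimate for each generator $E_{m,(1,0)^k(0,1)^l(1,1)^p}$ of $\mathscr{G}$ and then to propagate stability through the semiring operations exactly as in the proof of Theorem~\ref{maxplus}. By Theorem~\ref{tropicalgen} every $F \in \mathscr{G}$ is obtained from finitely many generators and constants by finitely many additions, subtractions and maxima; and if $F_1, F_2$ are Lipschitz with constants $C_1, C_2$, then $F_1+F_2$ and $F_1-F_2$ are $(C_1+C_2)$-Lipschitz (triangle inequality applied to the differences), $\max(F_1,F_2)$ is $\max(C_1,C_2)$-Lipschitz (same computation as in Theorem~\ref{maxplus}), and constants are $0$-Lipschitz. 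So the whole statement follows once we bound $|E_{m,e}(\mathscr{B}_1)-E_{m,e}(\mathscr{B}_2)|$ for a single generator $E=E_{m,e}$.

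For a fixed generator, write $e$ as $k$ copies of $(1,0)$, $l$ copies of $(0,1)$, $p$ copies of $(1,1)$ and the rest $(0,0)$, and set $D=k+l+2p$ (the total degree of $\sigma_e$). Recall that $E(\mathscr{B})=\sigma_e(\tilde x_1,d_1,\ldots,\tilde x_n,d_n)$ with $\tilde x_i = x_i\oplus d_i^m=\min(x_i,md_i)$, and that $\sigma_e$ is the maximum, over all assignments of the patterns $(1,0),(0,1),(1,1)$ to distinct bars, of the corresponding sum $\sum_{i\in S}(a_i\tilde x_i+b_id_i)$, where $(a_i,b_i)$ is the pattern assigned to bar $i$. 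First I would pad both barcodes with zero-length bars up to a common number $N$ of bars; this changes nothing since $E\in R_n\subseteq R_N$ by Theorem~\ref{rationalcoord} and the restriction maps act as described before Theorem~\ref{tropicalgen}. Choosing $N$ large enough, an optimal bottleneck matching becomes a bijection matching bar $i$ of $\mathscr{B}_1$ with bar $i$ of $\mathscr{B}_2$, with $\delta:=\max_i \textrm{d}_\infty((x_i,d_i),(x_i',d_i'))=\textrm{d}_\infty(\mathscr{B}_1,\mathscr{B}_2)$ (a bar matched to $\Delta$ becomes, after padding, matched to a zero-length bar with the same penalty $d_i/2$).

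Now assume without loss of generality $E(\mathscr{B}_1)\ge E(\mathscr{B}_2)$, and let a set $S$ with a role assignment realize the maximum for $\mathscr{B}_1$. Using the same $S$ and the same roles (transported by the matching) for $\mathscr{B}_2$ gives a lower bound for $E(\mathscr{B}_2)$, hence
\[
0 \le E(\mathscr{B}_1)-E(\mathscr{B}_2) \le \sum_{i\in S}\big(a_i|\tilde x_i-\tilde x_i'| + b_i|d_i-d_i'|\big).
\]
From the matching one gets $|d_i-d_i'|\le 2\delta$ in every case: for a genuine matched pair because $|x_i-x_i'|\le\delta$ and $|d_i-d_i'+x_i-x_i'|\le\delta$, and for a bar matched to a zero-length bar because then $d_i/2\le\delta$ and the other length is $0$. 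Since $\min$ is jointly $1$-Lipschitz, $|\tilde x_i-\tilde x_i'|=|\min(x_i,md_i)-\min(x_i',md_i')|\le\max(|x_i-x_i'|,\,m|d_i-d_i'|)\le 2m\delta$ (using $m\ge1$); in the one situation where $x_i$ itself is uncontrolled, namely a genuine bar matched to a zero-length bar, we still have $\tilde x_i\le md_i\le 2m\delta$ while $\tilde x_i'=0$. Each summand is therefore at most $2m\delta(a_i+b_i)$, and $\sum_{i\in S}(a_i+b_i)=D$, so $|E_{m,e}(\mathscr{B}_1)-E_{m,e}(\mathscr{B}_2)|\le 2mD\,\textrm{d}_\infty(\mathscr{B}_1,\mathscr{B}_2)$; combined with the first paragraph this proves the theorem.

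The main obstacle, and the only place where the specific structure of these functions is used rather than generic semiring bookkeeping, is the control of the bars that the optimal matching sends to the diagonal: their birth times $x_i$ may be arbitrarily large, so a plain elementary $2$-symmetric polynomial $\sigma_e$ would fail to be Lipschitz. The role of the $\oplus d_i^m$ cap is precisely to replace $x_i$ by $\min(x_i,md_i)\le md_i$, and the technical heart of the argument is checking that this cap interacts well with the bottleneck matching — via the joint $1$-Lipschitzness of $\min$ together with the uniform bound $|d_i-d_i'|\le 2\delta$ (and $d_i\le 2\delta$ for diagonal-matched bars).
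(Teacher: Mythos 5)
Your proposal is correct and follows essentially the same route as the paper: reduce to the generators $E_{m,e}$, pad with zero-length bars, transport the optimal assignment for $\mathscr{B}_1$ across an optimal matching to lower-bound $E(\mathscr{B}_2)$, bound the $\min(x_i,md_i)$ differences by $2m\,\textrm{d}_\infty$ and the $d_i$ differences by $2\,\textrm{d}_\infty$ (treating diagonal-matched bars separately), and then propagate Lipschitzness through sums and maxima as in Theorem~\ref{maxplus}. The only difference is cosmetic: you derive the bound on $|\min(x_i,md_i)-\min(x_i',md_i')|$ from the joint $1$-Lipschitzness of $\min$ rather than via the case analysis of the paper's Lemma~\ref{lemmam}, and your explicit constant $2m(k+l+2p)$ differs harmlessly from the paper's.
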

\begin{lemma}\label{lemmam}
Let $m\in \NN$, $m_i= \min\{ x_i, md_i\}$ and $m_i'= \min\{ x_i', md_i'\}$. Then
\[
|m_i - m_i'| \leq 2m \max(|x_i-x_i'|, |d_i-d_i'+x_i-x_i'|).
\]
\end{lemma}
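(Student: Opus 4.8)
The plan is to reduce the statement to two elementary observations: that the binary $\min$ is $1$-Lipschitz with respect to the sup-norm, and that the discrepancy in bar lengths, $|d_i - d_i'|$, is controlled by the interval distance $\textrm{d}_\infty((x_i,d_i),(x_i',d_i'))$.

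First I would write $\delta := \max(|x_i - x_i'|,\, |d_i - d_i' + x_i - x_i'|)$, which is exactly the distance $\textrm{d}_\infty$ between the intervals $(x_i,d_i)$ and $(x_i',d_i')$. The triangle inequality then gives $|d_i - d_i'| \le |x_i - x_i'| + |d_i - d_i' + x_i - x_i'| \le 2\delta$, while trivially $|x_i - x_i'| \le \delta$; multiplying through by $m$ yields $|m d_i - m d_i'| \le 2m\delta$. This factor of $2$ is exactly what produces the constant $2m$ in the statement rather than $m$.

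Next I would invoke the general inequality $|\min\{a,b\} - \min\{a',b'\}| \le \max(|a-a'|,\, |b-b'|)$, valid for all real numbers. One direction is immediate from $\min\{a,b\} \le a \le a' + |a - a'|$ and $\min\{a,b\} \le b \le b' + |b - b'|$, hence $\min\{a,b\} \le \min\{a',b'\} + \max(|a-a'|,|b-b'|)$; the reverse follows by symmetry. Applying this with $a = x_i$, $b = m d_i$, $a' = x_i'$, $b' = m d_i'$ gives
\[
|m_i - m_i'| \le \max(|x_i - x_i'|,\, |m d_i - m d_i'|) \le \max(\delta,\, 2m\delta) = 2m\delta,
\]
using $m \ge 1$, which is precisely the claimed bound.

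I do not expect a genuine obstacle here: the argument is short and self-contained. The only points deserving a moment's attention are the bound $|d_i - d_i'| \le 2\,\textrm{d}_\infty$, which pins down the exact constant, and the fact that the Lipschitz estimate for $\min$ must be taken in the sup-norm so that it matches the form of $\textrm{d}_\infty$; once these are in place the result drops out immediately and is ready to be fed into the proof of Theorem~\ref{tropratstab}.
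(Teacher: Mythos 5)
Your proof is correct and follows essentially the same route as the paper's: both arguments reduce to the intermediate bound $|m_i - m_i'| \leq \max(|x_i-x_i'|,\, m|d_i-d_i'|)$ and then control $|d_i-d_i'|$ by $2\max(|x_i-x_i'|, |d_i-d_i'+x_i-x_i'|)$ via the triangle inequality. The only difference is cosmetic: the paper establishes the intermediate bound by a three-way case analysis on which argument attains each minimum, whereas you invoke the general sup-norm $1$-Lipschitz property of $\min$, which is a cleaner packaging of the same fact.
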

\begin{proof}
If $x_i \leq m d_i$ and $x_i' \leq m d_i'$, then
\[
|m_i - m_i'| = |x_i-x_i'|.
\] 
If $x_i \geq m d_i$ and $x_i' \geq m d_i'$, then
\[
|m_i - m_i'| = |m d_i-md_i'| = m|d_i-d_i'|.
\]
Let $x_i \leq m d_i$ and $x_i' > m d_i'$ (the case when $x_i > m d_i$ and  $x_i' \leq m d_i'$ is analogous). Since $0 \leq x_i \leq m d_i$, 
\[
-md_i' \leq x_i-md_i' \leq m(d_i-d_i').
\]
On the other hand $-x_i' < -m d_i' \leq 0$ and consequently
\[
x_i-x_i' < x_i-m d_i' \leq x_i.
\]
It follows that
\[
|x_i - m d_i'| \leq \max\{|x_i-x_i'|, m |d_i-d_i'|\}
\]
and consequently
\[
|m_i - m_i'| \leq \max\{|x_i-x_i'|, m |d_i-d_i'| \} \leq m\max\{|x_i-x_i'|, |d_i-d_i'| \}.
\]
By triangle inequality 
\[
|d_i-d_i'| \leq |d_i-d_i'+x_i-x_i'| + |x_i-x_i'| \leq 2  \max(|x_i-x_i'|, |d_i-d_i'+x_i-x_i'|). 
\]
Finally these two inequalities imply
\[ 
 \max(|x_i-x_i'|, |d_i-d_i'+x_i-x_i'|) \leq 2m \max(|x_i-x_i'|, |d_i-d_i'+x_i-x_i'|)
 \]
\end{proof}
\begin{proof}[Proof of Theorem~\ref{tropratstab}]
 Take $E = E_{m, (0, 1)^{l}(1,0)^k (1, 1)^p}$. Let $\mathscr{B}_1 = \{ (x_1, d_1), \ldots, (x_{l_1}, d_{l_1}) \}$ and  $\mathscr{B}_2 = \{(x_1', d_1'), \ldots, (x_{l_2}', d_{l_2}')\}$ be such that ${\mathscr{B}_1 \neq \mathscr{B}_2}$. Define $m_i, m_i'$ as in Lemma~\ref{lemmam}.
  
 Without loss of generality assume that  
 \[
 E_{m, (0, 1)^{l}(1,0)^k (1, 1)^p}(\mathscr{B}_1) \geq  E_{m, (0, 1)^{l}(1,0)^k (1, 1)^p}(\mathscr{B}_2)
 \] 
 and  
 \[
 E_{m, (0, 1)^{l}(1,0)^k (1, 1)^p}(\mathscr{B}_1) = \sum_{i=1}^p(m_i+d_i) + \sum_{i=p+1}^{p+k} m_i + \sum_{i=p+k+1}^{p+k+l} d_i.
 \]
 If $l_1, l_2 < p+k+l$, we add 0 length intervals to both barcodes.

Let $\theta$ be a bijection where the penalty is minimal, i.e.\ where ${P_\infty(\theta) = \textrm{d}_\infty(\mathscr{B}_1, \mathscr{B}_2)}$. Assume that $\theta$ matches $(x_1, d_1)$ with $(x_1', d_1')$, $(x_2, d_2)$ with $(x_2', d_2')$, \ldots, $(x_{p+k+l}, d_{p+k+l})$ with $(x_{p+k+l}', d_{p+k+l}')$. 
Recall that for all $i$ in this matching,
\[
|\frac{d_i-d_i'}{2}| \leq \max_{i=1, \ldots, m}(|x_i-x_i'|, |d_i-d_i'+x_i-x_i'|).
\]
Let's also check what happens if $d_i'=0$.  In this case, $(x_i, d_i)$ is matched to a 0 length barcode and
\[
d_i \leq  2\textrm{d}_\infty(\mathscr{B}_1, \mathscr{B}_2),\quad m_i \leq md_i \leq 2m\textrm{d}_\infty(\mathscr{B}_1, \mathscr{B}_2)
\]
and
\[
d_i + m_i \leq (2+2m)\textrm{d}_\infty(\mathscr{B}_1, \mathscr{B}_2).
\]
Let $M=\max\{1, m\}$. Using Lemma~\ref{lemmam} and the above inequalities 
\[
\begin{array}{lcl}
 E(\mathscr{B}_1)-E(\mathscr{B}_2)  & = & \sum_{i=1}^p(m_i+d_i) + \sum_{i=p+1}^{p+k} m_i + \sum_{i=p+k+1}^{p+k+l} d_i-E(\mathscr{B}_2) \\
 & \leq &\sum_{i=1}^p(m_i-m_i'+d_i-d_i') + \sum_{i=p+1}^{p+k} (m_i-m_i') + \sum_{i=p+k+1}^{p+k+l} (d_i-d_i')\\
  & = &2| \sum_{i=1}^p\frac{m_i-m_i'}{2}+ \sum_{i=1}^p\frac{d_i-d_i'}{2} + \sum_{i=p+1}^{p+k} \frac{m_i-m_i'}{2} + \sum_{i=p+k+1}^{p+k+l} \frac{d_i-d_i'}{2}|\\
    & \leq  &2( \sum_{i=1}^p|\frac{m_i-m_i'}{2}|+ \sum_{i=1}^p|\frac{d_i-d_i'}{2}| + \sum_{i=p+1}^{p+k} |\frac{m_i-m_i'}{2}| + \sum_{i=p+k+1}^{p+k+l} |\frac{d_i-d_i'}{2}|)\\
    &\leq & 2 (2pMP_\infty(\theta) + 2pP_\infty(\theta) +2kMP_\infty(\theta) + 2lP_\infty(\theta))\\
    &\leq & 2 (2pM+2p+2kM+2l) \textrm{d}_\infty(\mathscr{B}_1, \mathscr{B}_2).\\
\end{array}
\] 
This proves that $E$ is Lipschitz. In Proof of Theorem~\ref{maxplus} we showed that stable functions on the barcode space are preserved under taking sums, maxima and minima. Since $E_{m, (e_{1, 1}, e_{1, 2}), \ldots, (e_{n, 1}, e_{n, 2})}$ are stable as any $F\in \mathscr{G}$ is composed of taking sums, maxima and minima of $E_{m, (e_{1, 1}, e_{1, 2}), \ldots, (e_{n, 1}, e_{n, 2})}$.
\end{proof}

\begin{theorem}[Wasserstein stability of functions in $\mathscr{G}$]\label{tropratstab1}
If $F \in \mathscr{G}$, then a constant $C$ exists such that
\[
|F(\mathscr{B}_1) - F(\mathscr{B}_2)| \leq C \textrm{d}_q(\mathscr{B}_1, \mathscr{B}_2)
\]
for any pair of barcodes $\mathscr{B}_1$ and  $\mathscr{B}_2$.
\end{theorem}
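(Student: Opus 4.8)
The plan is to run the proof of Theorem~\ref{tropratstab} almost verbatim, replacing its bottleneck ($\ell^{\infty}$) estimates by $q$-th power estimates and closing with H\"older's inequality in the same way the proof of Theorem~\ref{wasmaxplus} upgrades Lemma~\ref{lemmamaxplus} to the Wasserstein setting. Just as in the bottleneck case it suffices to prove the bound for a single generator $E=E_{m,(0,1)^{l}(1,0)^{k}(1,1)^{p}}$: stability with respect to $\textrm{d}_q$ is preserved under $+$, $-$, $\max$ and $\min$ (the argument in the proof of Theorem~\ref{maxplus} uses only the triangle inequality and is thus metric-agnostic), and every $F\in\mathscr{G}$ is built from the generators by these operations. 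I assume $q\geq 1$, which is what makes $t\mapsto t^{q}$ increasing on $[0,\infty)$ and makes H\"older applicable.

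So fix barcodes $\mathscr{B}_1,\mathscr{B}_2$, pad both with zero-length intervals so that each has at least $N:=p+k+l$ bars, and choose a matching $\theta$ realizing $P_q(\theta)^{1/q}=\textrm{d}_q(\mathscr{B}_1,\mathscr{B}_2)$. Assume without loss of generality that $E(\mathscr{B}_1)\geq E(\mathscr{B}_2)$ and that $E(\mathscr{B}_1)=\sum_{i=1}^{p}(m_i+d_i)+\sum_{i=p+1}^{p+k}m_i+\sum_{i=p+k+1}^{N}d_i$ is attained on the bars $(x_1,d_1),\dots,(x_N,d_N)$ with $\theta$-partners $(x_1',d_1'),\dots,(x_N',d_N')$, where $m_i=\min\{x_i,md_i\}$ and $m_i'=\min\{x_i',md_i'\}$. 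Exactly as in the proof of Theorem~\ref{tropratstab} this gives
\[
E(\mathscr{B}_1)-E(\mathscr{B}_2)\ \leq\ \sum_{i=1}^{p}(m_i-m_i'+d_i-d_i')+\sum_{i=p+1}^{p+k}(m_i-m_i')+\sum_{i=p+k+1}^{N}(d_i-d_i').
\]

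Then I would bound each index's contribution by a constant multiple of a penalty term of $\theta$. Write $\pi_i=\textrm{d}_\infty(I_i,J_{\theta(i)})$ when the $i$-th bar is matched to a genuine bar and $\pi_i=\textrm{d}_\infty(I_i,\Delta)=d_i/2$ when it is matched to the diagonal. Lemma~\ref{lemmam} gives $|m_i-m_i'|\leq 2m\,\pi_i$, and the triangle-inequality estimate inside that lemma gives $|d_i-d_i'|\leq 2\pi_i$; the diagonal case is handled as in the proof of Theorem~\ref{tropratstab}, where $d_i\leq 2\pi_i$ and $m_i\leq md_i\leq 2m\pi_i$. Hence with $M=\max\{1,m\}$ there is a constant $C'$ depending only on $m$ (one may take $C'=2M+2$; the precise value is immaterial) with $E(\mathscr{B}_1)-E(\mathscr{B}_2)\leq C'\sum_{i=1}^{N}\pi_i$. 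Now the power-mean inequality $\left(\sum_{i=1}^{N}\pi_i\right)^{q}\leq N^{q-1}\sum_{i=1}^{N}\pi_i^{q}$, together with $\sum_{i=1}^{N}\pi_i^{q}\leq P_q(\theta)=\textrm{d}_q(\mathscr{B}_1,\mathscr{B}_2)^{q}$, yields
\[
|E(\mathscr{B}_1)-E(\mathscr{B}_2)|\ \leq\ C'\,N^{\frac{q-1}{q}}\,\textrm{d}_q(\mathscr{B}_1,\mathscr{B}_2),
\]
so $E$ is Lipschitz with respect to $\textrm{d}_q$. The theorem follows from the preservation of stability under $+$, $-$, $\max$ and $\min$, with the constant for a given $F$ depending on the finitely many generators appearing in it.

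The pointwise estimates are routine --- they are literally Lemma~\ref{lemmam} plus the short case analysis already carried out for the bottleneck distance. The single genuinely new step over Theorem~\ref{tropratstab} is the H\"older reduction from an $\ell^{1}$-sum of penalty terms to $P_q(\theta)$, and the point I expect to need care is the bookkeeping: the same $N$ indices (the $N$ longest bars of $\mathscr{B}_1$ together with their $\theta$-images or diagonal projections) must be used both to bound $E(\mathscr{B}_1)-E(\mathscr{B}_2)$ and to collect the $\pi_i^{q}$, and after replacing $\theta$ by a genuine optimal partial matching (so a long bar of $\mathscr{B}_1$ is never forced onto a far-away padded zero-bar) each $\pi_i^{q}$ really occurs among the summands of $P_q(\theta)$, with no term double-counted.
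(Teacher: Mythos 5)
Your proposal is correct and follows essentially the same route as the paper: reduce to a single generator $E_{m,(0,1)^l(1,0)^k(1,1)^p}$, bound $E(\mathscr{B}_1)-E(\mathscr{B}_2)$ termwise by the per-bar penalties via Lemma~\ref{lemmam} and the diagonal case, and then apply H\"older to pass from the $\ell^1$ sum of penalties to $P_q(\theta)^{1/q}=\textrm{d}_q(\mathscr{B}_1,\mathscr{B}_2)$, closing with the metric-agnostic preservation of stability under sums, maxima and minima. Your bookkeeping with the unified penalty terms $\pi_i$ is in fact slightly cleaner than the paper's, and your explicit caveat about matching the same $N$ indices without double-counting addresses a point the paper leaves implicit.
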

\begin{proof}
 We denote the function $E_{m, (0, 1)^{l}(1,0)^k (1, 1)^p}$ by $E$. Let $\mathscr{B}_1 = \{ (x_1, d_1), \ldots, (x_{l_1}, d_{l_1}) \}$ and ${\mathscr{B}_2 = \{(x_1', d_1'), \ldots, (x_{l_2}', d_{l_2}')\}}$ be such that ${\mathscr{B}_1 \neq \mathscr{B}_2}$. Without loss of generality assume that  
 \[
 E_{m, (0, 1)^{l}(1,0)^k (1, 1)^p}(\mathscr{B}_1) \geq  E_{m, (0, 1)^{l}(1,0)^k (1, 1)^p}(\mathscr{B}_2)
 \] 
 and  
 \[
 E_{m, (0, 1)^{l}(1,0)^k (1, 1)^p}(\mathscr{B}_1) = \sum_{i=1}^p(m_i+d_i) + \sum_{i=p+1}^{p+k} m_i + \sum_{i=p+k+1}^{p+k+l} d_i.
 \]
 If $l_1, l_2 < p+k+l$, we add 0 length intervals to both barcodes.

Let $\theta$ be a bijection where the penalty is minimal, i.e.\ where ${P_\infty(\theta) = \textrm{d}_q(\mathscr{B}_1, \mathscr{B}_2)}$. Assume that $\theta$ matches $(x_1, d_1)$ with $(x_1', d_1')$, $(x_2, d_2)$ with $(x_2', d_2')$, \ldots, $(x_{p+k+l}, d_{p+k+l})$ with $(x_{p+k+l}', d_{p+k+l}')$. 
Recall that for all $i$ in this matching,
\[
|\frac{d_i-d_i'}{2}|^q \leq \max_{i=1, \ldots, m}(|x_i-x_i'|, |d_i-d_i'+x_i-x_i'|)^q
\]
since $x \mapsto x^q$ is increasing for $x >0$.
As before, if $d_i'=0$, $(x_i, d_i)$ is matched to a 0 length barcode and
\[
d_i \leq  2\textrm{d}_q(\mathscr{B}_1, \mathscr{B}_2),\quad m_i \leq md_i \leq 2m\textrm{d}_q(\mathscr{B}_1, \mathscr{B}_2)
\]
and
\[
d_i + m_i \leq (2+2m)\textrm{d}_q(\mathscr{B}_1, \mathscr{B}_2).
\]
Let $M=\max\{1, m\}$. Using Lemma~\ref{lemmam} and the above inequalities, we get:
\[
\begin{array}{lcl}
|E(\mathscr{B}_1)-E(\mathscr{B}_2)|^q & = & (\sum_{i=1}^p(m_i+d_i) + \sum_{i=p+1}^{p+k} m_i + \sum_{i=p+k+1}^{p+k+l} d_i-E(\mathscr{B}_2))^q \\
 & \leq &(\sum_{i=1}^p(m_i-m_i'+d_i-d_i') + \sum_{i=p+1}^{p+k} (m_i-m_i') + \sum_{i=p+k+1}^{p+k+l} (d_i-d_i'))^q\\
  & = &2^q| \sum_{i=1}^p\frac{m_i-m_i'}{2}+ \sum_{i=1}^p\frac{d_i-d_i'}{2} + \sum_{i=p+1}^{p+k} \frac{m_i-m_i'}{2} + \sum_{i=p+k+1}^{p+k+l} \frac{d_i-d_i'}{2}|^q\\
    & \leq  &2^q( \sum_{i=1}^p|\frac{m_i-m_i'}{2}|+ \sum_{i=1}^p|\frac{d_i-d_i'}{2}| + \sum_{i=p+1}^{p+k} |\frac{m_i-m_i'}{2}| + \sum_{i=p+k+1}^{p+k+l} |\frac{d_i-d_i'}{2}|)^q\\
        & \leq  &2^q (2p+k+l)^{q-1}(2pM+2p+2kM+2l)^q P_q(\theta)^q\\
        &\leq & 2^q (2pM+2p+2kM+2l)^{2q} \textrm{d}_q(\mathscr{B}_1, \mathscr{B}_2)^q.\\
\end{array}
\] 
The first inequality holds since $\sum_{i=1}^p(m_i'+d_i') + \sum_{i=p+1}^{p+k} m_i' + \sum_{i=p+k+1}^{p+k+l} d_i' \leq E (\mathscr{B}_2)$. The last inequality uses H{\H o}lder's inequality. Taking the $q$-th root finishes the proof.

We conclude that $E$ is Lipschitz. In Proof of Theorem~\ref{maxplus} we showed that stable functions on the barcode space are preserved under taking sums, maxima and minima. Since $E_{m, (e_{1, 1}, e_{1, 2}), \ldots, (e_{n, 1}, e_{n, 2})}$ are stable, $F\in \mathscr{G}$ is also stable as it is composed of taking sums, maxima and minima of $E_{m, (e_{1, 1}, e_{1, 2}), \ldots, (e_{n, 1}, e_{n, 2})}$.
\end{proof}

\section{Classifying Digits with Tropical Coordinates}
Adcock et al.~\cite{algfn} used polynomial coordinates to classify digits from the MNIST database~\cite{MNISTdata} of handwritten digits. In this section we compare classification results they obtained with mine, which were classified using tropical coordinates. Aaron Adcock provided the matlab code needed to convert digital images into filtrations.

While homology itself cannot distinguish between the digits - 1, 5, and 7 never have loops, 0, 6, 9 always have loops, 8 has two loops, while 2, 3, 4 might or might not have loops, depending on style - we can use persistent homology as a measurement of shape. Figure~\ref{digits} shows the first 100 digits of the database.
\begin{figure}[h!]
  \begin{center}
    \includegraphics[scale=0.4]{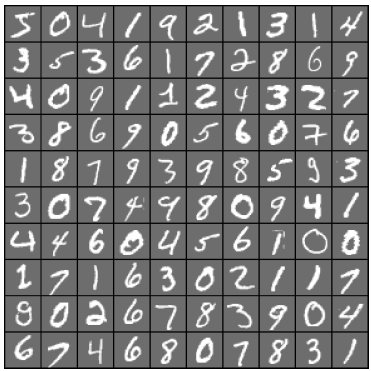}
      \caption{The first 100 images of the MNIST database.}\label{digits}
  \end{center}
  \end{figure}
The original black and white images were first normalized, scaled into a $20\times 20$ pixel bounding box and anti-aliased, which introduced grayscale levels. Pixel values are 0 to 255, where 0 means background (white), 255 means foreground (black).

Following Collins et al.~\cite{Collins:2004:BSD:2386332.2386363}, we first threshold (setting pixel values greater than 100 to 1 and the rest to 0) to produce a binary image. We construct four filtrations as follows. For each pixel we add a vertex, for any pair of adjacent pixels (diagonals included) an edge and for any triple of adjacent pixels a 2-simplex. We sweep across the rows from the left and the right and across the columns from top to bottom and vice versa. This adds spatial information into what would otherwise be a purely topological measurement. We take both Betti 0 and Betti 1.

This extra spatial information reveals the location of various topological features. For example, though a `9' and `6' both have one connected component and one loop, the loop will appear at different locations in the 1-dimensional homology top-down sweep for the `9' and `6' (see Figure~\ref{17odd}). In digits with no loops 0-dimensional homology right to left sweep distinguishes `3' from other digits (see Figure~\ref{02even}).  
  \begin{figure}[h!]
  \begin{center}
    \begin{tabular}{cc}
     \includegraphics[scale=0.1]{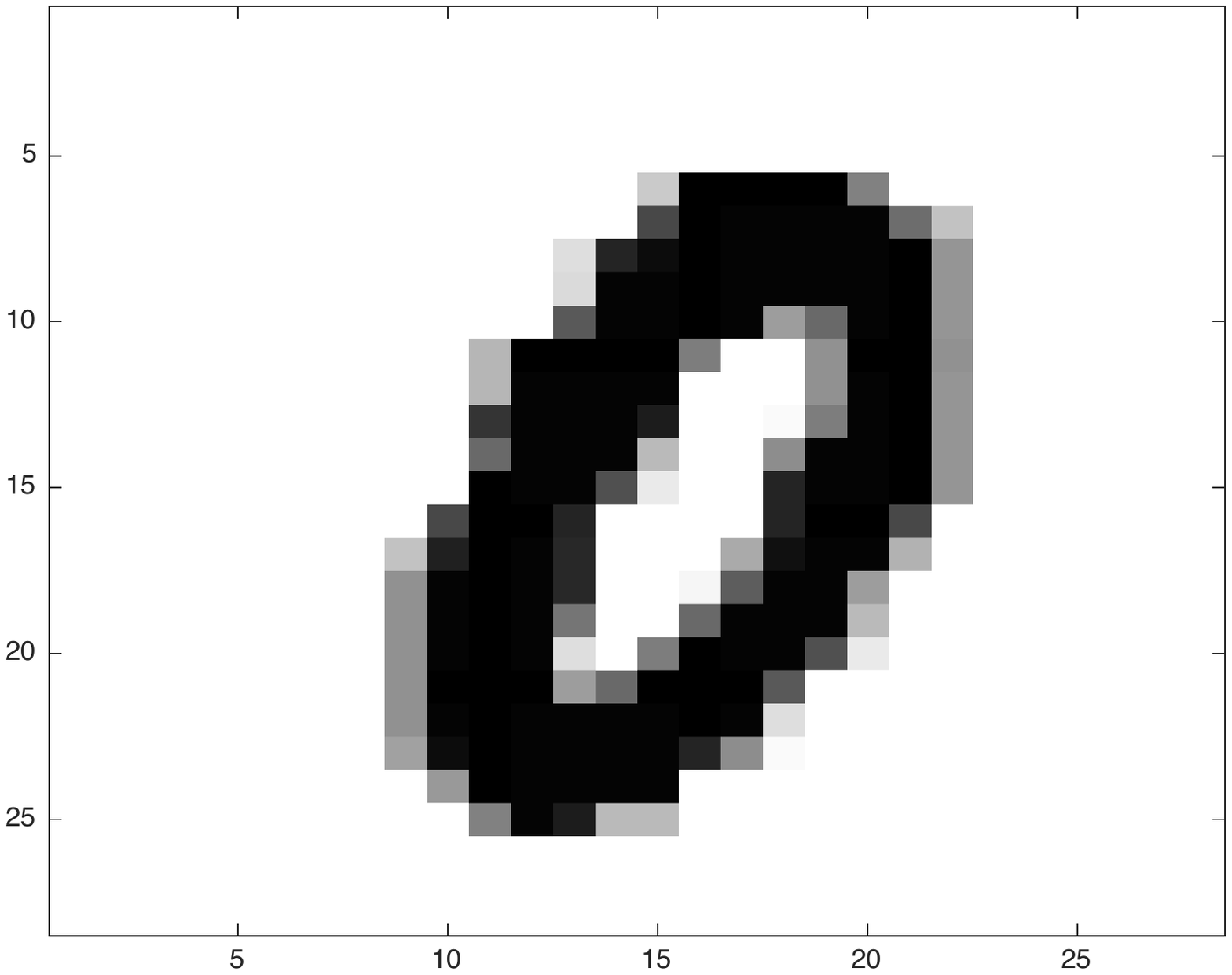}       &       \includegraphics[scale=0.3]{0upsweep1.pdf}\\
           &\\
      \includegraphics[scale=0.1]{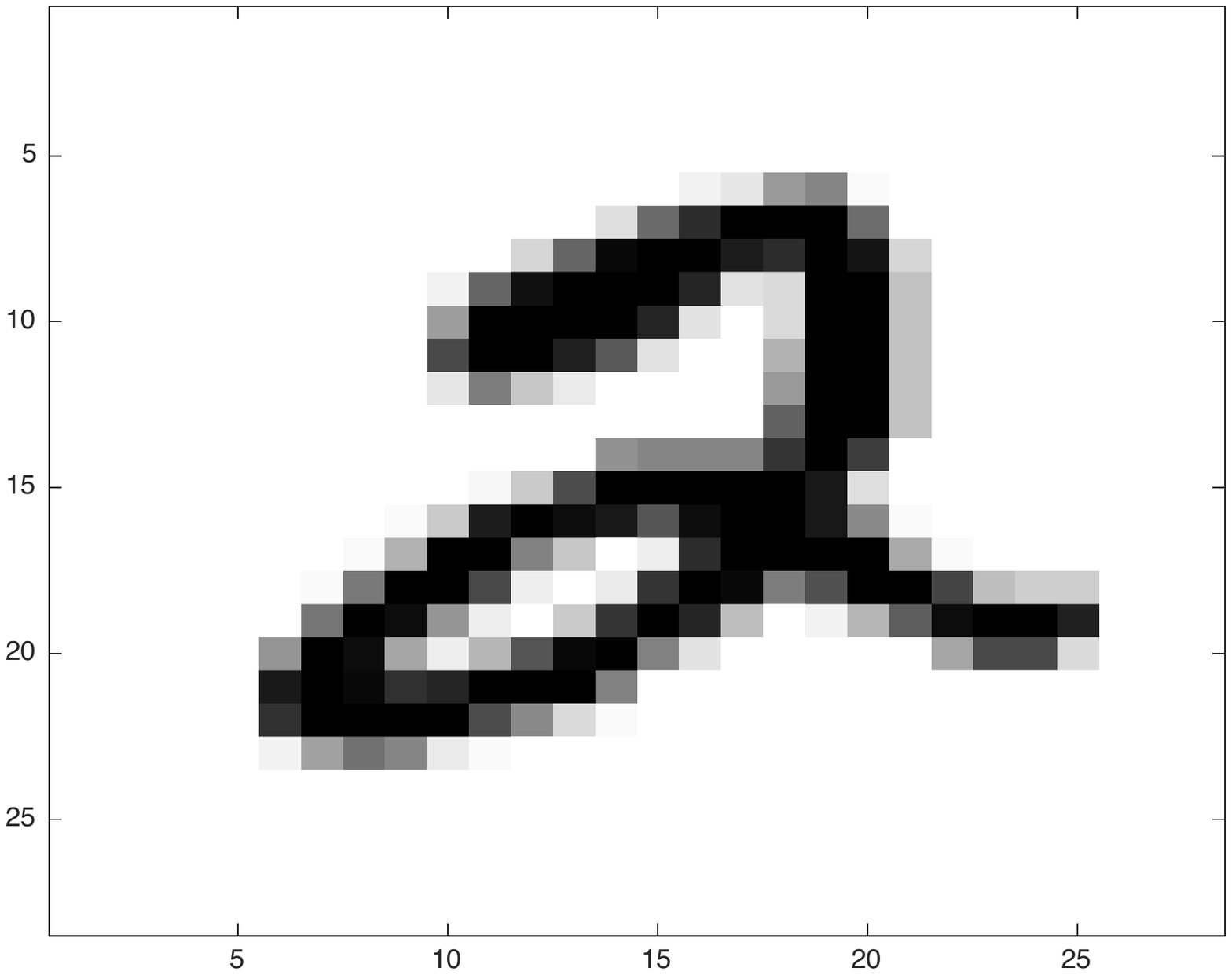}    &  \includegraphics[scale=0.3]{2upsweep1.pdf}\\
            &\\
        \includegraphics[scale=0.1]{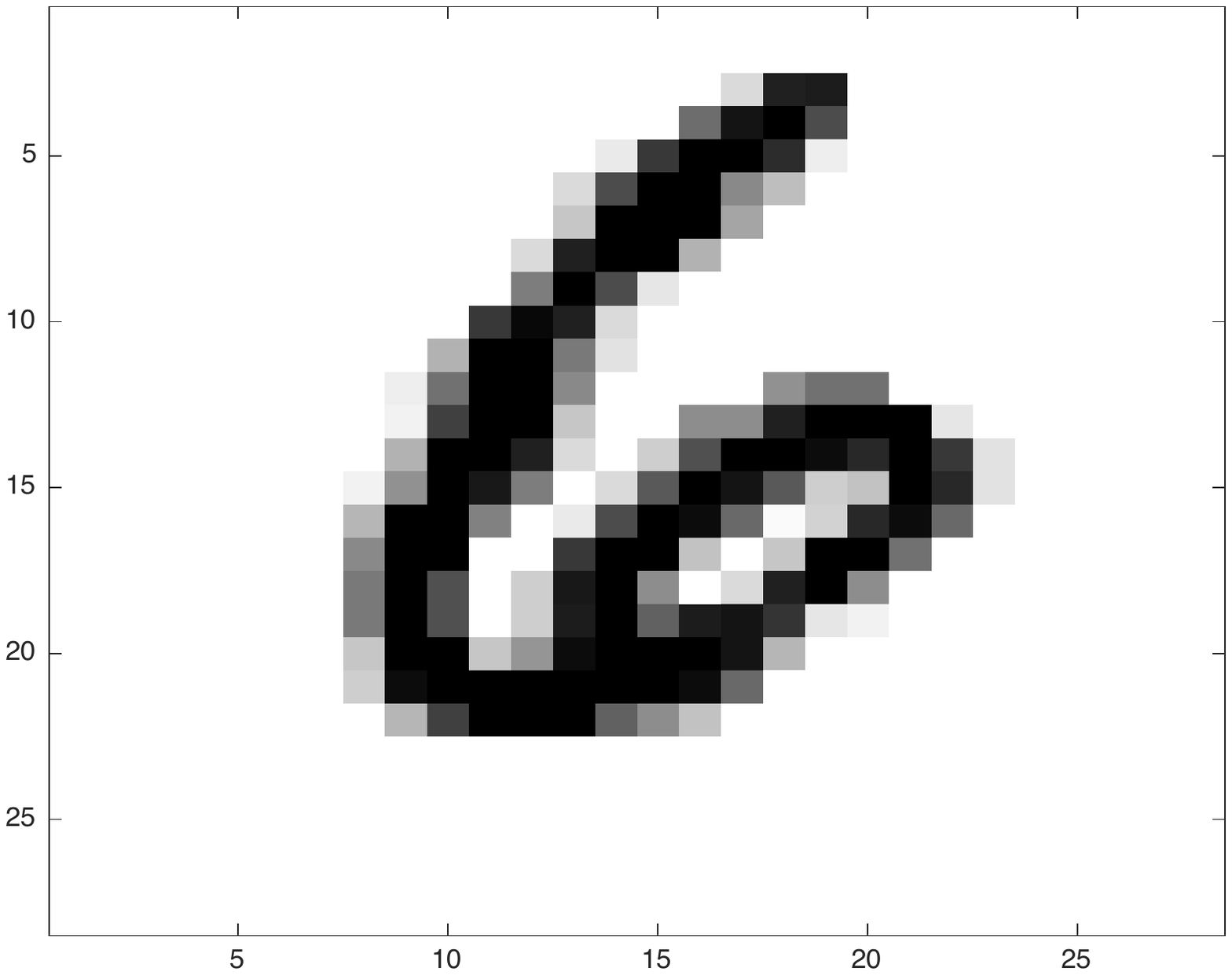}    &          \includegraphics[scale=0.3]{6upsweep1.pdf}\\
              &\\
          \includegraphics[scale=0.1]{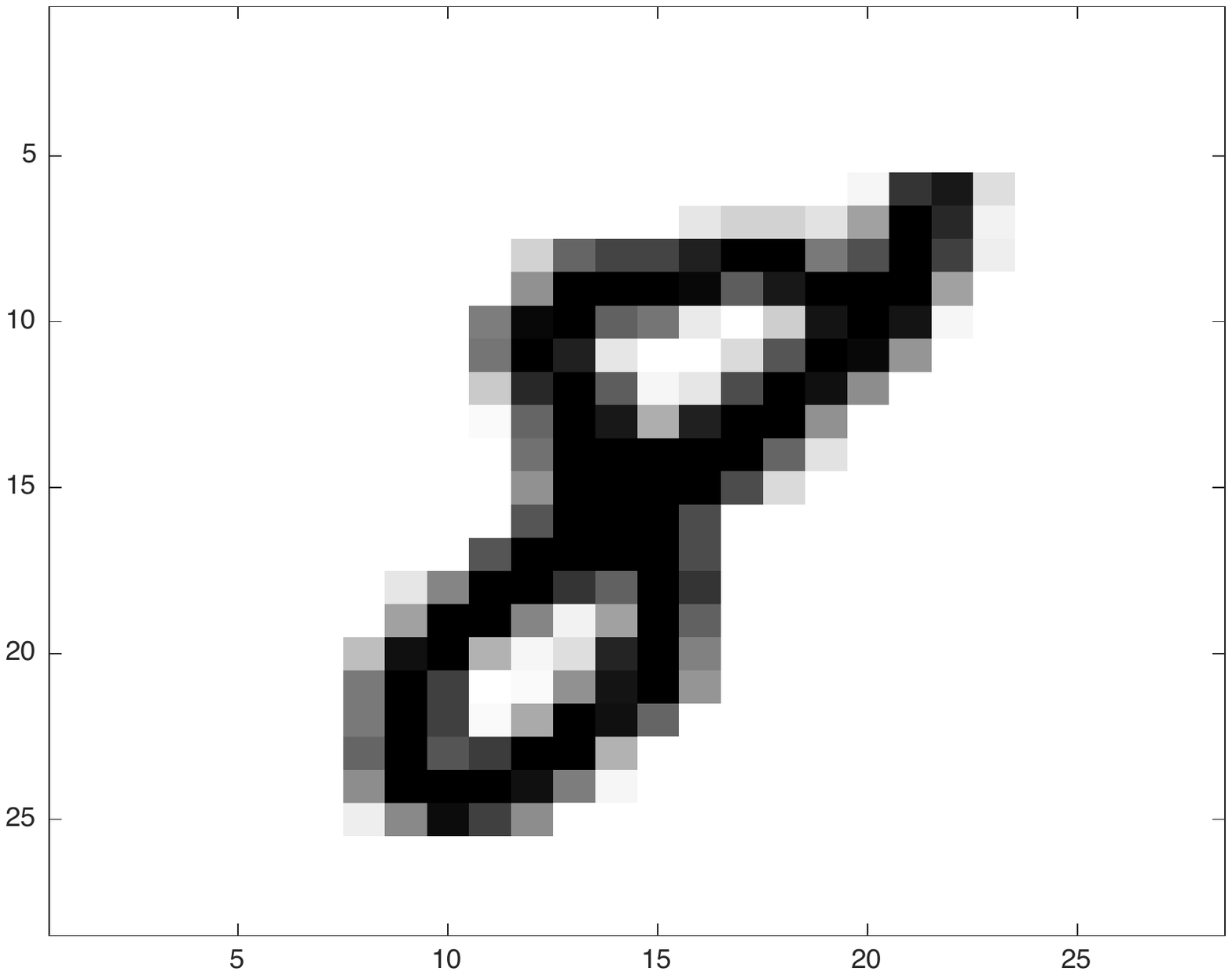}    &  \includegraphics[scale=0.3]{8upsweep1.pdf}\\
                &\\
          \includegraphics[scale=0.1]{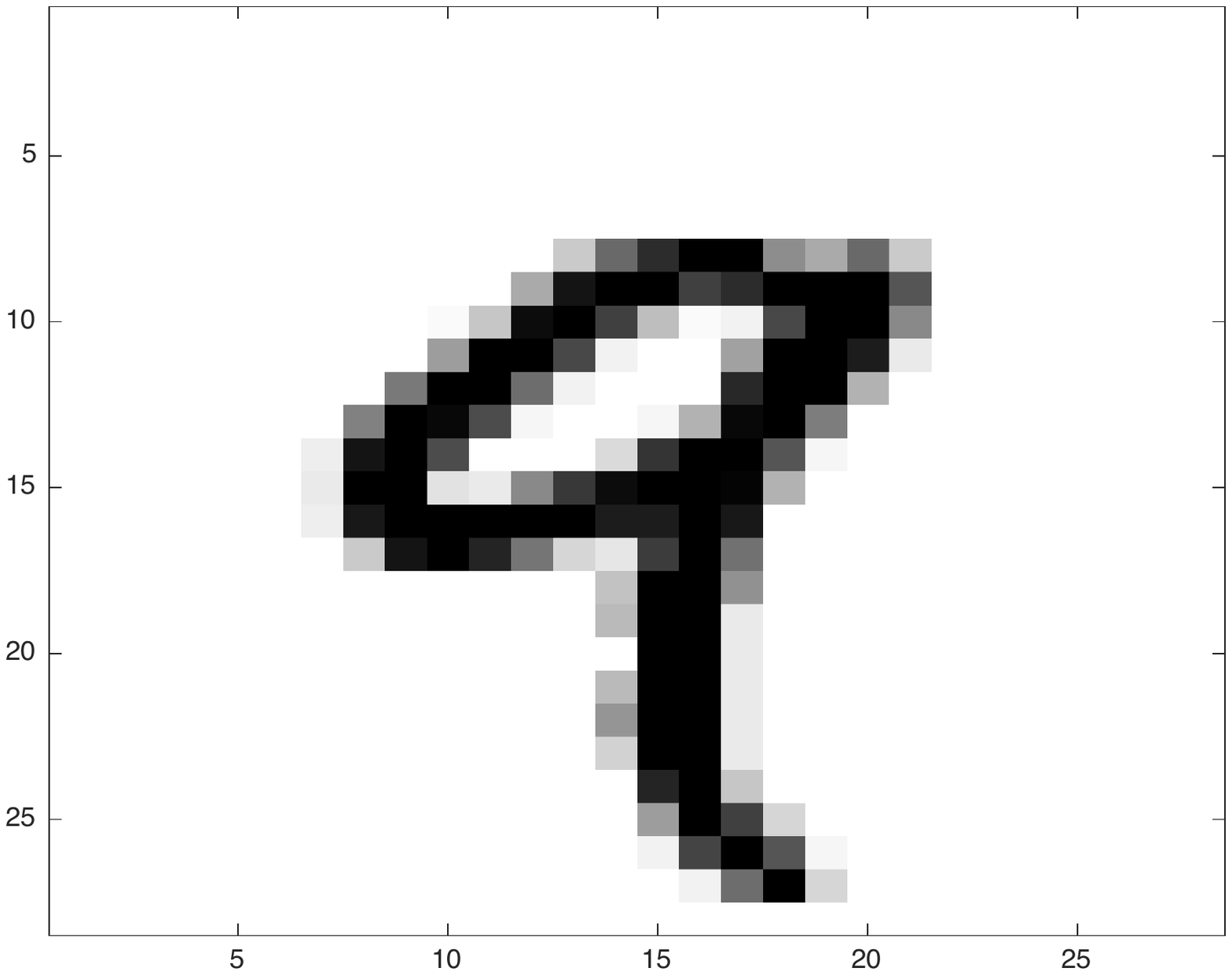}      &       \includegraphics[scale=0.3]{9upsweep1.pdf}       \\  
 \end{tabular}
      \caption{1-dimensional homology bottom to top sweep for `0', `2', `6', `8' and `9'.}\label{02even}
  \end{center}
  \end{figure}

\begin{figure}[h!]
  \begin{center}
  \begin{tabular}{cc}
      \includegraphics[scale=0.1]{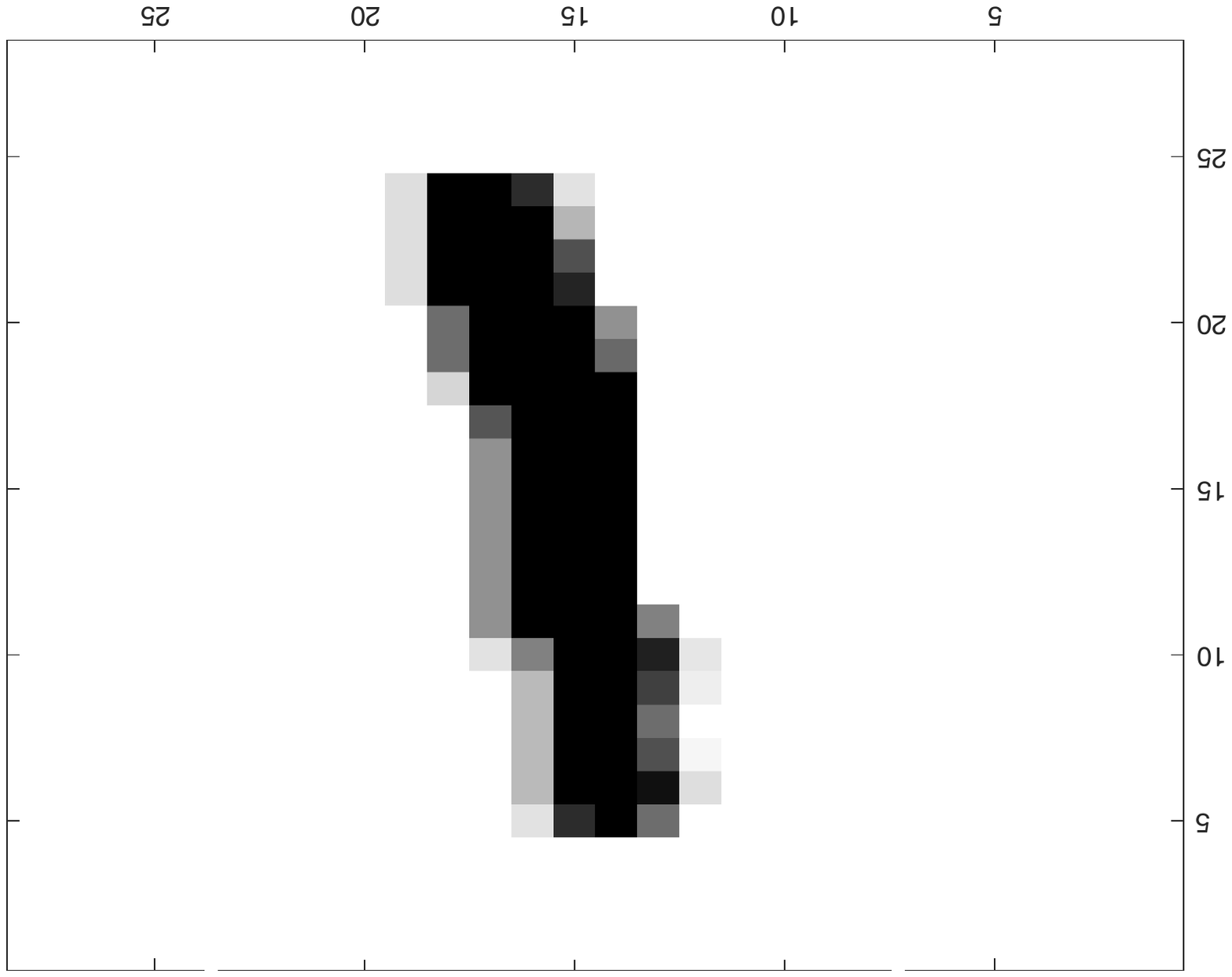}     &  \includegraphics[scale=0.3]{1rightsweep0.pdf}\\
      &\\
        \includegraphics[scale=0.1]{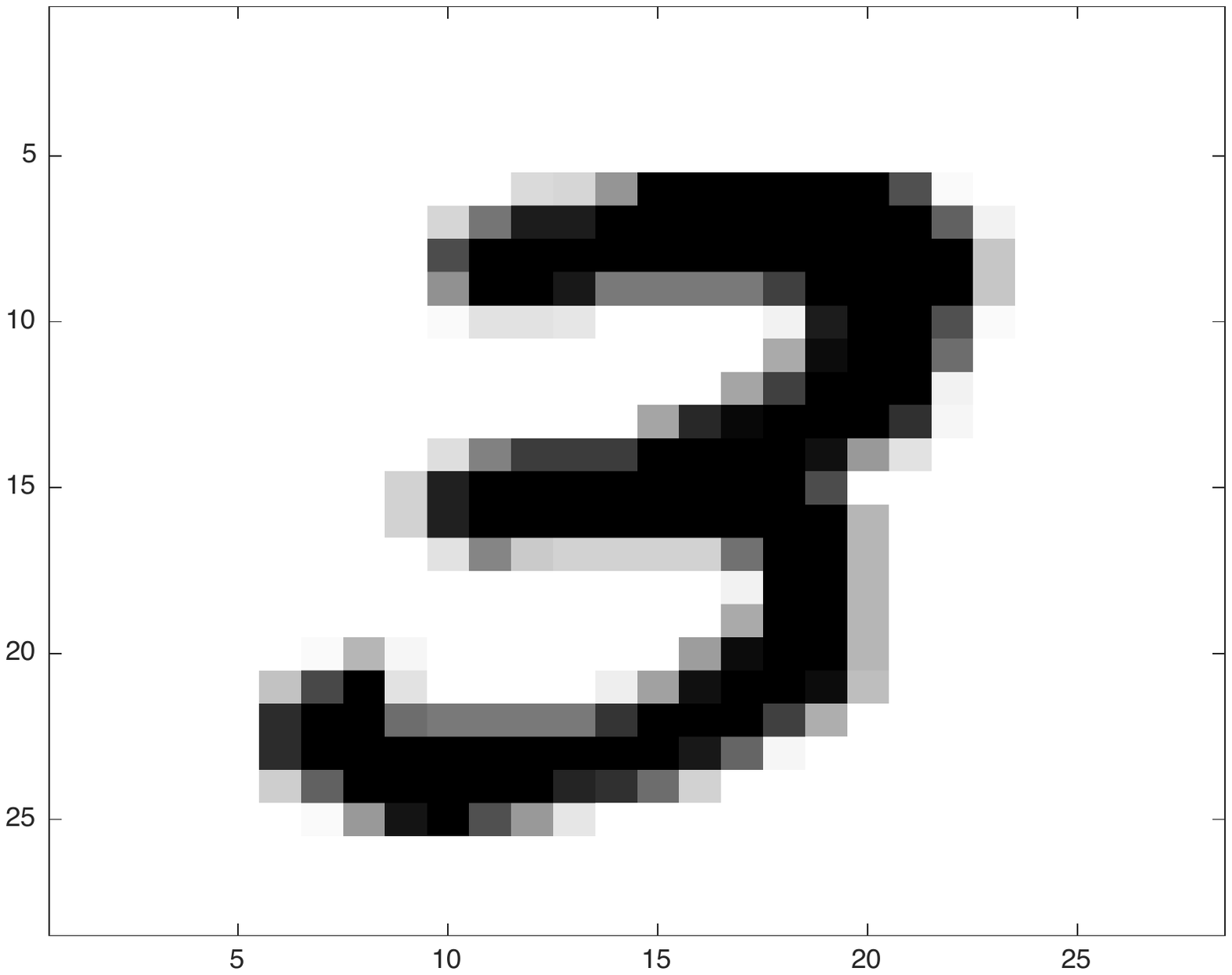}     &    \includegraphics[scale=0.3]{3rightsweep0.pdf}\\   
        &\\
               \includegraphics[scale=0.1]{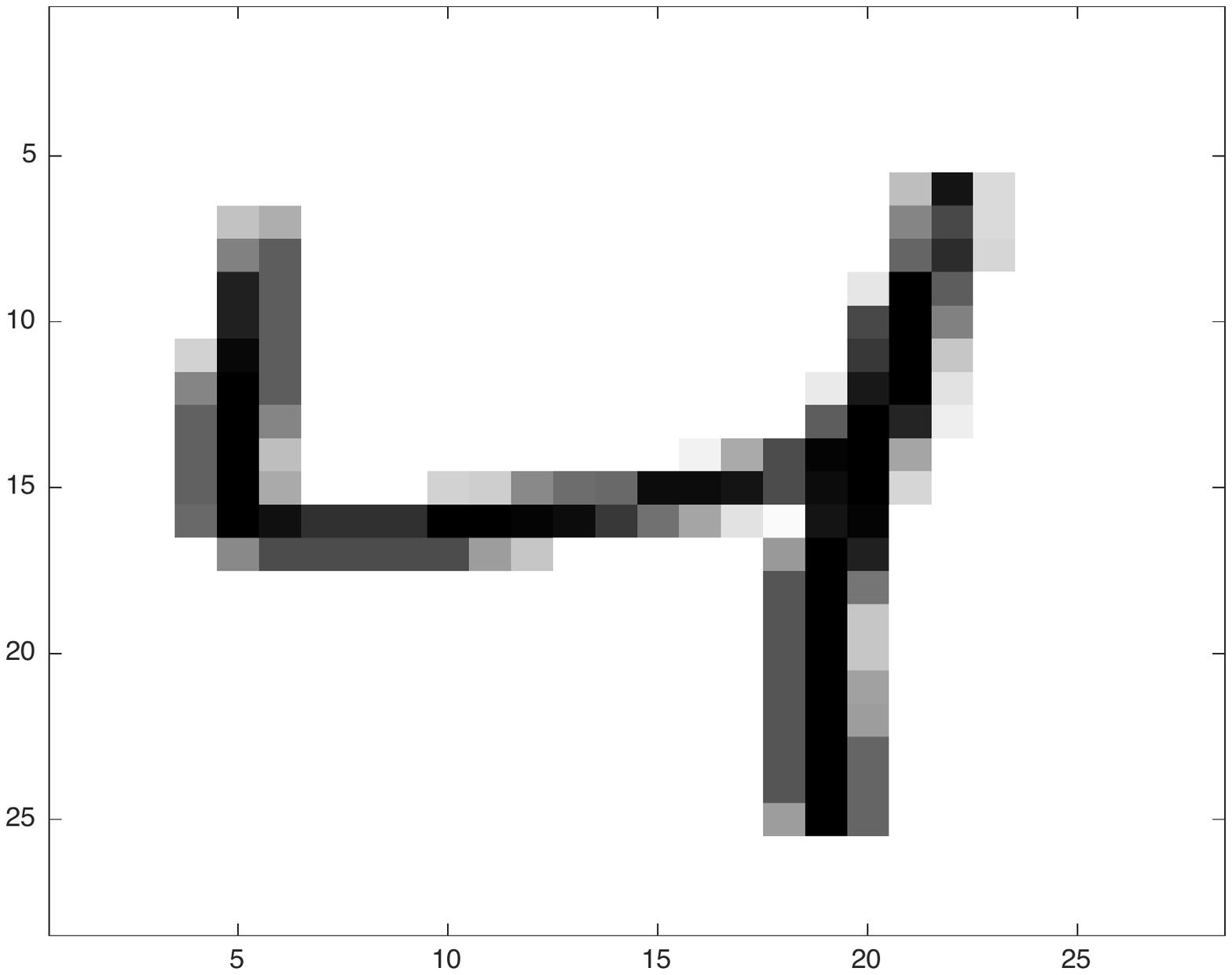}   &      \includegraphics[scale=0.3]{4rightsweep0.pdf}\\
               &\\
      \includegraphics[scale=0.1]{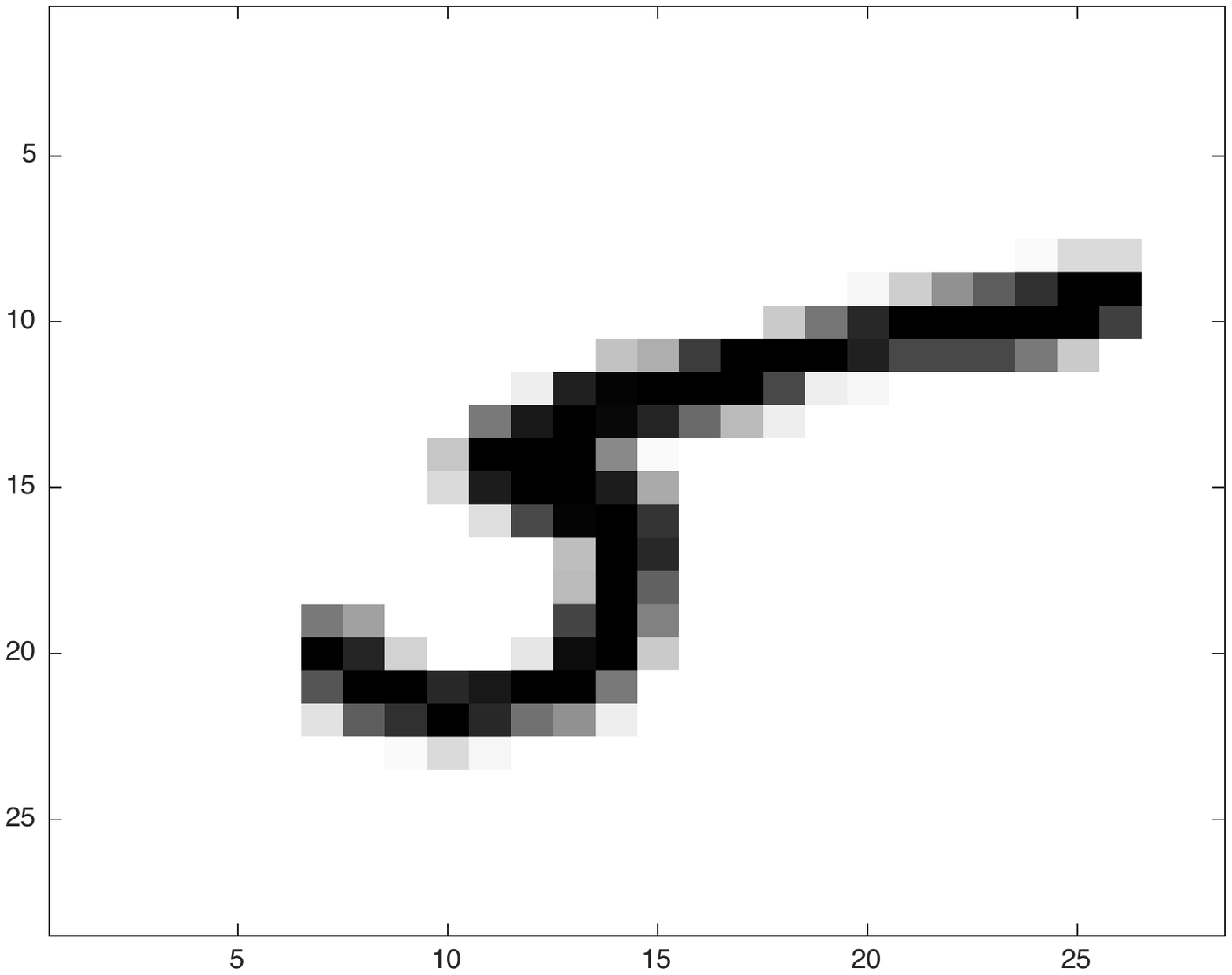}   &    \includegraphics[scale=0.3]{5rightsweep0.pdf}\\
      &\\
          \includegraphics[scale=0.1]{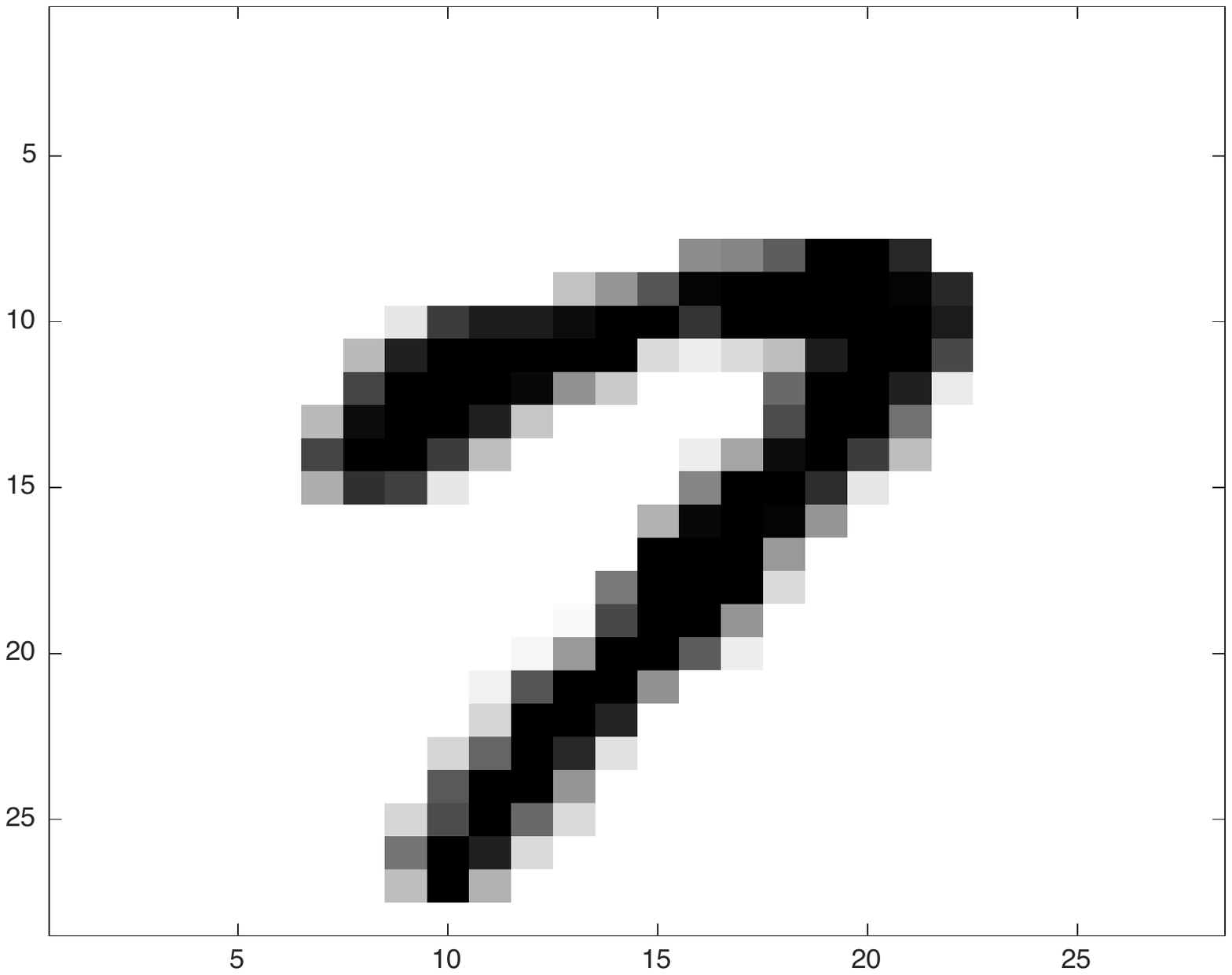}   &    \includegraphics[scale=0.34]{7rightsweep0.pdf}
      \end{tabular}
      \caption{0-dimensional homology right to left sweep for `1', `3', `4', `5' and `7'.}\label{17odd}
  \end{center}
  \end{figure}
We can use different methods for turning barcodes into vectors. Adcock et al. selected four features,
\[
\begin{array}{l}
\sum_i x_i(y_i-x_i) \\
 \sum_i(y_{\max} - y_i)(y_i- x_i)\\
 \sum_ix_i^2 (y_i-x_i)^4\\
 \sum_i(y_{\max} - y_i)^2(y_i-x_i)^4
\end{array}
\]
which when applied to the four sweeps, each with a 0-dimensional and 1-dimensional barcode, gives a feature vector of total size 32. We used command \texttt{fitcecoc} in matlab to get an error-correcting output codes (ECOC) multiclass model~\cite{MATLAB:2017}. This model was trained using support vector machine (SVM)~\cite{Cortes1995}. We obtained the best results using the Gaussian kernel. As is typical when using a SVM, we scaled each coordinate such that the values were between 0 and 1. To measure the classification accuracy we used 100-fold cross-validation. See Table~\ref{ordclass} for results.

\begin{table}[h!]
\begin{center}
\begin{tabular}{| c | c | c |}
\hline			
1000 digits & 5000 digits & 10000 digits \\ \hline
87.5\% &  90.04\% & 91.04\% \\ \hline  
\end{tabular}
\caption{Classification accuracy using ordinary polynomial coordinates.}\label{ordclass}
\end{center}
 \end{table}         
Using the following max-plus type coordinates 
\[
\begin{array}{ll}
\max_{i} d_i & \max_{i<j} (d_i + d_j) \\
\max_{i<j<k} (d_i + d_j+d_k) & \max_{i<j<k<l} (d_i + d_j+d_k+d_l) \\
 \sum_i d_i  & \sum_i \min(28d_i, x_i)  \\
 \sum_i (\max_i(\min(28d_i, x_i) +d_i) -(\min(28d_i, x_i) +d_i) ) &   \\
\end{array}
\]
yields slightly better results (see Table~\ref{maxclass}).
\begin{table}[h!]
\begin{center}
\begin{tabular}{| c | c | c |}
\hline			
1000 digits & 5000 digits & 10000 digits \\ \hline
 87.70\% & 91.36\% & 92.41\% \\ \hline  
\end{tabular}
\caption{Classification accuracy using max-plus type coordinates.}\label{maxclass}
\end{center}
 \end{table}  
Note that we used many functions involving sums of lengths of intervals. These yielded the best results, which is perhaps not surprising since when using persistent homology and interpreting the barcode, we assign importance to features depending on over what range of parameters they persist.
 
This method just demonstrates how one can use persistent homology with other machine learning algorithms and does not outperform existing classification algorithms. Figure~\ref{misclass} shows examples of digits that were not correctly classified. 
\begin{figure}[h!]
  \begin{center}
    \includegraphics[scale=1.3]{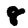} \includegraphics[scale=1.3]{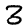} \includegraphics[scale=1.3]{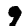}
     \includegraphics[scale=1.3]{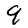}   \includegraphics[scale=1.3]{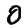}   \includegraphics[scale=1.3]{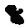}
     
      \caption{Common Misclassifications.}\label{misclass}
  \end{center}
  \end{figure}
The most common confusion is between a `5' and a `2' written with no loop. Other common confusions occur when topological changes occurred to the digit, for example when `8' is written with no loops, etc.

These examples also show the power of combining topology with geometry, and in particular demonstrate how coordinates can serve as a method for organizing the collection of all barcodes, and therefore any database whose members produce barcodes. 
 They are also stable with respect to the bottleneck and Wasserstein distances.

{\large Acknowledgement}

The author thanks Gunnar Carlsson, Gregory Brumfiel, Davorin Le\v{s}nik and the referees for helpful discussions.

\printbibliography
 \end{document}